


 \documentclass[final,3p,times]{elsarticle}


\usepackage{amssymb}






\journal{Stochastic Processes and their Applications}


\usepackage{amsmath, amsthm, amssymb} 
\usepackage{bm} 

\usepackage{epsfig} 
\usepackage{graphicx} 







\def\bea{\begin{eqnarray}}
\def\eea{\end{eqnarray}}
\def\bean{\begin{eqnarray*}}
\def\eean{\end{eqnarray*}}
\renewcommand\eqref[1]{(\ref{#1})}


\def\R{\mathbb{R}}
\def\N{\mathbb{N}}
\def\Z{\mathbb{Z}}

\def\Ind#1{\,\mathbb{I}\{#1\}\,}



\theoremstyle{plain}
\newtheorem{theorem}{Theorem}
\newtheorem{proposition}[theorem]{Proposition}
\newtheorem{corollary}[theorem]{Corollary}
\newtheorem{lemma}[theorem]{Lemma}
\theoremstyle{definition}
\newtheorem{definition}[theorem]{Definition}
%
%


%
%


\def\CP{{N}} 

\def\dt{{h}} 

\def\dtt{{\bar{h}}} 

\def\inNat{{\in \N}}
\def\inNatII{{\ge 1}}

\def\MultI{{\mathcal{I}}}
\def\MultII{{\mathcal{J}}}
\def\MultSet{{\zeta}}
\def\MultCar{{C}}
\def\MultInd{{c}}

\def\BlockSet{{\mathcal{B}}}
\def\BlockInd{{b}}

\def\c{{c}} 

\def\SIP{{\Gamma}}


\def\Tf{U}
\def\tf{u}

\def\Ts{V}
\def\ts{v}

\ifx \q \undefined
  \def \q[#1][#2][#3]{q_{#2}^{#3}(#1)}
  \def \qt[#1][#2][#3]{\tilde q_{#2}^{#3}(#1)}
\fi 

\def \papertitle{Compound Markov counting processes and their applications to modeling infinitesimally over-dispersed systems}





\def \paperkeywords{continuous time; counting Markov process; birth-death process; environmental stochasticity; infinitesimal over-dispersion; simultaneous events}

\def \paperabstract{
We propose an infinitesimal dispersion index for Markov counting processes.
We show that, under standard moment existence conditions,
a process is infinitesimally (over-) equi-dispersed if, and only if, it is simple (compound), i.e. it increases in jumps of one (or more) unit(s), even though infinitesimally equi-dispersed processes might be under-, equi- or over-dispersed using previously studied indices.
Compound processes arise, for example, when introducing continuous-time white noise to the rates of simple processes resulting in L\'{e}vy-driven SDEs.
We construct multivariate infinitesimally over-dispersed compartment models and queuing networks, suitable for applications where moment constraints inherent to simple processes do not hold.
}

\begin{document}

\begin{frontmatter}



\title{\papertitle}

\author{C. Bret\'{o}\fnref{label1}}
\fntext[label1]{corresponding author; Tel. +34916245855; Fax:+34916249849}
\ead{cbreto@est-econ.uc3m.es}
\address{Department of Statistics, Universidad Carlos III of Madrid, C/ Madrid 126, 28903, Madrid, Spain}

\author{E. L. Ionides}
\ead{ionides@umich.edu}
\address{Department of Statistics, University of Michigan, 1085 South University Avenue, Ann Arbor, MI 48109-1107}


\begin{abstract}
\paperabstract
\end{abstract}

\begin{keyword}
\paperkeywords

\end{keyword}

\end{frontmatter}



\section{Introduction}\label{sec:intro}
Continuous-time stochastic processes are widely used as a modeling tool for studying dynamical systems in different fields.
Most continuous-time processes proposed in the literature belong to one of two large families: real-valued processes which can be written as solutions to stochastic differential equations \citep{karlin1975,oksendal1998} and discrete-valued processes defined via counting processes \cite{daley2003, snyder1991, cox1980} or Markov chains \cite{bremaud1999}.
In this paper, we focus on the intersection between counting processes and Markov processes, namely Markov counting processes (MCPs from this point onward).
MCPs are building blocks for models which are heavily used in biology (in the context of compartment models) and engineering (in the context of queues and queuing networks) as well as in many other fields.

A \emph{counting process} is a continuous-time, non-decreasing, non-negative, integer-valued stochastic process.
The counting process is said to count \emph{events} each of which has an associated \emph{event time}.
A counting process is \emph{simple} if, with probability one, there is no time at which two or more events occur simultaneously.
A process which is not simple is called \emph{compound}.
Simpleness is a convenient, and therefore widely adopted, property for both the theory and applications of counting processes \citep{daley2003}.
The Markov property is also a convenient and widespread property of stochastic models.
However, we will show that simple MCPs, combining these two attractive properties, have severe limitations in terms of the range of possible relationships between their infinitesimal mean and variance.
Previous approaches to negotiate this difficulty have centered on sacrificing the Markov property rather than simpleness.
However, there are theoretical and practical attractions to the alternative strategy of maintaining the Markov property while allowing for simultaneous events.
Investigating such models is the topic of this paper.

The ratio of the variance to the mean of a random variable is called its \emph{dispersion}.
Many well-known integer-valued distributions have dispersion constraints.
These constraints are often not reproduced in data from applications, the data typically having additional variance and therefore being termed \emph{over-dispersed} \citep{mccullagh1989}.
The same issues arise in integer-valued stochastic processes \citep{brown1998} and, as a result, there is a considerable literature devoted to extending otherwise appealing models which are unable to reproduce observed variability.
Typically, over-dispersion has been studied via defining stochastic processes in which some parameters are themselves modeled as stochastic in order to  produce additional variability.
This idea has been widely applied since the pioneering work of Greenwood and Yule \cite{greenwood1920}, which derived the over-dispersed negative binomial distribution as a mixture of the Poisson distribution with a gamma-distributed parameter.
Another early contribution is the Cox process \citep{cox1955}, also known as doubly-stochastic Poisson  process \citep{cox1980,snyder1991,daley2003}.
Some recent work has considered stochastic parameters for continuous-time Markov chains \cite{economou2005} and for non-Markovian processes \cite{swishchuk2003}.
Marion and Renshaw \cite{marion2000} and Varughese and Fatti \cite{varughese2008} studied over-dispersion generated by standard birth-death processes with diffusion-driven rates, focusing on population dynamics applications.
Both \cite{marion2000} and \cite{varughese2008} proposed a mean-reverting Ornstein-Uhlenbeck process for the driving random environment.
Compound counting processes have been studied in the literature on batch processes \cite{ormeci2005}, but we are not aware of a previous investigation of infinitesimal dispersion in this context.
To our knowledge, the first general class of infinitesimally over-dispersed MCPs was proposed by Bret\'{o} et al. \cite{breto2009-aoas}.
They achieved over-dispersion by introducing white noise to rates of a multivariate process constructed via simple death processes, which was shown to result in the possibility of simultaneous events.
The main goal of this paper is to generalize the model of \cite{breto2009-aoas} by presenting a systematic investigation of over-dispersed models via compound MCPs.
In particular, those defined by L\'{e}vy-driven stochastic differential equations \cite{applebaum2004} resulting from introducing continuous-time white noise in the rate of simple MCPs via Kolmogorov's differential equations.
The applications of MCPs are too diverse to cover systematically here.
One concrete example, which has been a motivation for our work \cite{breto2009-aoas}, is the study of infectious disease dynamics.
Discrete-state Markov processes have proven useful models for studying many infectious disease transmission systems, and are central to current understanding of the spread of such diseases through populations \citep{keeling2009}.
However, standard disease models are constructed via simple MCPs and therefore struggle to match the statistical properties observed in data.
Recent advances in statistical inference methodology \citep{ionides2006-pnas,andrieu2010} have permitted fitting more general models, based on compound MCPs, to data \citep{breto2009-aoas, he2009}.
At least in this context, the substantial scientific consequences of adequately modeling over-dispersion in stochastic processes are consistent with the widely recognized importance of over-dispersion for drawing correct inferences from integer-valued regression models \citep{mccullagh1989}.

As concrete examples of models defined by L\'{e}vy-driven Kolmogorov's differential equations, we compute infinitesimal moments and infinitesimal probabilities for various specific novel models.
The availability of infinitesimal probabilities makes possible exact simulation, and exact methods are particularly appropriate when dealing with small counts, which arise naturally in some applications.
In infectious disease applications, for example, small counts arise at the start of an epidemic, which is a critical period for identifying and controlling the disease transmission.
Exact simulation of MCPs can be computationally demanding for processes with a very large number of events.
In this cases, it is standard to use approximations which are more affordable computationally but require some diagnostics to investigate the validity of the approximation.
To this end, both Euler-Maruyama time discretizations of MCPs and diffusion approximations have been proposed in the literature \cite{breto2009-aoas, he2009,ionides2006-pnas, king2008, marion2000, varughese2008, fan1999}.
Several algorithms have been proposed  in which two simulation methods are used, an exact one for small counts and a faster, approximate one for larger counts \cite{haseltine2002}.
In order to use combined algorithms of this type, it is necessary to choose a diffusion approximation, given some MCP.
Diffusions are defined in a straightforward way in terms of infinitesimal moments.
Requiring that the MCP and the proposed diffusion approximation have common infinitesimal moments gives a natural approach for such an approximation, giving further motivation for the study of infinitesimal moments of MCPs.

A second goal of this paper is to propose the use of an infinitesimal dispersion index for counting processes in conjunction with standard indices.
This provides a simple measure of dispersion, combining attractive theoretical properties with scientific interpretability, which is desirable when considering candidate processes for applications.
Markov processes specified as the solution to stochastic differential equations are naturally characterized by their infinitesimal mean and variance \citep{karlin1975}.
However, these infinitesimal moments have not been studied in the context of counting processes, perhaps because, as we will show, in the case of simple MCPs the infinitesimal variance is constrained to be equal to the infinitesimal mean.
Instead, interest has focused on dispersion properties of increments of counting processes over fixed time windows, which we call \emph{integrated dispersion} to distinguish it from infinitesimal dispersion.
The study of integrally over-dispersed counting processes has a long history, going back at least to the start of the twentieth century \citep{student1907} and continuing up to the present \citep[e.g.,][]{balakrishnan2008}.
Integrated dispersion has undoubtedly an interest of its own, in particular if the integration window is chosen according to some specific criterion (possibly motivated in applications by scientific evidence).
Because of this window dependence, integrated dispersion may give a distorted representation of a process, in the same way that discretizing a continuous-time process at different resolutions might give very different pictures.
In particular, we show that all of integrated over-, equi- and under-dispersion may occur for infinitesimally equi-dispersed processes.
By contrast, infinitesimal dispersion provides an intuitive and theoretically attractive measure which has already proven its worth in the study of real-valued Markov processes.

In Section~\ref{sec:notation} we investigate the infinitesimal moments of simple and compound MCPs, and compare them with previously studied measures of integrated dispersion.
Then, in Section~\ref{sec:OMCP} we propose several novel over-dispersed compound MCPs.
In Section~\ref{sec:notation-mult}, we define multivariate versions of the dispersion indices of Section~\ref{sec:notation} and find sufficient and necessary conditions for infinitesimal equi- and over-dispersion.
Finally, in Section~\ref{sec:OMCS} we show how the univariate MCPs of Section~\ref{sec:OMCP} may be used as building blocks for more complex processes, such as compartment models or queuing networks, which inherit the desired dispersion properties.
We conclude with Section~\ref{sec:disc}, where we discuss some conceptual and practical issues in modeling via compound MCPs.


\section{Dispersion of Markov counting processes}\label{sec:notation}

One can study dispersion in the context of non-Markovian processes, but several considerations have led us to focus on the Markov case here.
Firstly, there is less room for debate over the definition of appropriate measures of dispersion for Markov processes.
Secondly, the extensively studied theory of Markov chains \citep{bremaud1999} allows us to avoid explicitly discussing measure-theoretic issues while being guaranteed that there are no difficulties concerning the existence and construction of the processes in question.
Thirdly, our later goal of studying over-dispersed Markov counting processes clearly does not necessitate a complete investigation of non-Markovian possibilities.
We comment on some non-Markovian situations in Section~\ref{sec:nonMarkov}.

Let $\{N(t):t \in \R^+\}$ be a time homogeneous Markov counting process, which we will refer to as $\{N(t)\}$.
By analogy with the terminology of infinitesimal and integrated moments of Section~\ref{sec:intro}, we define infinitesimal increment probabilities (or just \emph{infinitesimal probabilities}) of an MCP to be
\begin{eqnarray}
\label{eqn:CP-inf-prob}
\q[n,k][][] &\equiv& \lim\limits_{\dt \downarrow 0} \frac{P(\Delta N(t)=k|N(t)=n)}{\dt}.
\end{eqnarray}
These are also commonly referred to as the local characteristics of the transition semigroup, or the infinitesimal generator of the corresponding MCP \cite{bremaud1999}.
To clarify our notation, note that~\eqref{eqn:CP-inf-prob} are not actual probabilities but rather the appropriate limit of the \emph{integrated probabilities} $P(\Delta N(t)=k|N(t)=n)$.
Here $t, \dt \in \R^+$ and $k, n\;\inNat$ with $k\;\inNatII$.
The operator $\Delta$ acting on a stochastic process is defined as $\Delta N(t) = N(t+\dt) - N(t)$ and the dependence of $\Delta N(t)$ on $\dt$ is suppressed.
Following standard terminology for counting processes, we define the \emph{intensity} or \emph{(infinitesimal) rate} function of such a process to be
\[
\lambda(n) \equiv \lim\limits_{\dt \downarrow 0}\frac{1-P(\Delta N(t)=0|N(t)=n)}{\dt}.
\]
Note that in definition \ref{eqn:CP-inf-prob} we have allowed for simultaneous events, i.e. $\{N(t)\}$ need not be simple.
Simple processes may be fully specified via their rate function, which in this case is $\lambda(n)=\q[n,1][][]$, and this is also a measure of the intensity at which events occur.
A counting process \emph{jumps} whenever there is an event, and we call the times at which there is one or more event \emph{jump times}.
The jumps are of size one if the process is simple and might be of greater size if the process is compound.
We emphasize the difference between jump times and event times because these two concepts overlap in the specific case of simple counting processes but are in general distinct.
To specify a compound processes one needs to provide all the infinitesimal probabilities, since the rate $\lambda(n)$ corresponds only to the rate of jumps and is uninformative about the distribution of jump sizes. For such processes, the infinitesimal mean may be a superior measure of the intensity at which events occur.

We restrict ourselves to \emph{stable} and \emph{conservative} processes for which $\lambda(n)=\sum_{k\ge 1}q(n,k) <\infty$ for all $n$.
Markov processes satisfying these conditions form a very general class, and the MCP is then characterized by its infinitesimal probabilities \cite{bremaud1999}.
We also restrict ourselves to time homogeneous processes to add clarity to the concepts, results and proofs.
However, these can be readily generalized to the non-homogeneous case, for which the infinitesimal probabilities also depend on time.

Measures of dispersion which have previously been considered for counting processes include the variance to mean ratio $V[N(t)]/E[N(t)]$ (for example in \citep{gillespie1984}) and the difference $V[N(t)] - E[N(t)]$ (in \citep{brown1998}).
We will define the integrated dispersion index of $\{N(t)\}$ as
\begin{eqnarray}
\label{eqn:IntDisp}
 D_N(n_0,t) & \equiv & \frac{V[N(t)-N(0)|N(0)=n_0]}{E[N(t)-N(0)|N(0)=n_0]}.
\end{eqnarray}
Usually $n_0$ is assumed to be $0$ in which case $D_N$ corresponds to the standard dispersion index defined as a ratio.
Note however that \eqref{eqn:CP-inf-prob} defines $\{N(t)\}$ in infinitesimal terms.
This suggests the infinitesimal dispersion index which we define as
\begin{eqnarray}
\label{eqn:DispIndex} D_{dN}(n) & \equiv& \frac{\lim_{\dt \downarrow 0}\dt^{-1}V[N(t+\dt)-N(t)|N(t)=n]}{\lim_{\dt \downarrow 0} \dt^{-1}E[N(t+\dt)-N(t)|N(t)=n]} \equiv \frac{\sigma^2_{dN}}{\mu_{dN}},
\end{eqnarray}
as an alternative to $D_N$.
The numerator and denominator of~\eqref{eqn:DispIndex} are the standard definitions of \emph{infinitesimal variance} and \emph{infinitesimal mean} respectively \cite{karlin1975}.
Note that these two moments are conditional and that dependence of the infinitesimal moments on $n$ is suppressed.
By the algebraic properties of limits, $D_{dN}(n_0) = \lim_{t \downarrow 0}D_N(n_0,t)$ as long as the limit of the denominator of \eqref{eqn:DispIndex} exists.
A process has traditionally been considered over-dispersed when $V[N(t)] > E[N(t)]$. Analogously we define a process as infinitesimally (integrally) over-dispersed if $D_{dN}> 1(D_N > 1)$, for all $t$ and $n$, and define under- and equi-dispersion accordingly. For some processes, these conditions might not hold for all $t$ or all $n$. In this case we specify the subsets for which they hold.
Since we focus on infinitesimal properties, we will drop in the rest of the paper the term infinitesimal in order to simplify notation.
If we do not specify whether we refer to infinitesimal or integrated moments or dispersion, it should be understood that we mean the former.

In light of definition \ref{eqn:DispIndex}, it is interesting to find necessary and sufficient conditions characterizing dispersion of processes before considering construction of over-dispersed ones, which we proceed to do in Sections~\ref{sec:OMCP} and \ref{sec:OMCS}.
In this section, we establish for univariate MCPs sufficient conditions for equi-dispersion in Theorem~\ref{thm:suf-cond-equidisp} and necessary conditions for over-dispersion in Corollary~\ref{cor:nec-cond-overdisp}.
This provides a starting point for our investigation.
For example, it is immediate that compound Poisson processes (i.e., the class of compound MCPs with stationary independent increments) will be over-dispersed unless the event size distribution is degenerate at 1, in which case one is back to the simple Poisson processes.
We defer the more complete result of necessary and sufficient conditions for both equi- and over-dispersion to Section~\ref{sec:notation-mult} where we consider multivariate processes.

To understand what lies at the heart of equi-dispersion and of Theorem~\ref{thm:suf-cond-equidisp} consider the following expressions for the moments of the increments of a process $\{N(t)\}$:
\begin{eqnarray}
\nonumber E\big[\Delta N^r(t)|N(t)\big] &=& 0^{r}P\big(\Delta N(t){=}0|N(t)\big) + 1^rP\big(\Delta N(t){=}1|N(t)\big) + \sum\limits_{k=2}^{\infty}k^rP\big(\Delta N(t){=}k|N(t)\big)\\
\label{eqn:equi-disp-intuition}\lim_{\dt \downarrow 0}\frac{E\big[\Delta N^r(t)|N(t)\big]}{\dt} &=& \lim_{\dt \downarrow 0} \frac{P\big(\Delta N(t){=}1|N(t)\big)}{\dt} + \lim_{\dt \downarrow 0}\frac{\sum\limits k^rP\big(\Delta N(t){=}k|N(t)\big)}{\dt}.
\end{eqnarray}
It is straightforward that the difference between any two infinitesimal or integrated moments comes from terms in the sum corresponding to increments of size larger than one, i.e. to simultaneous events.

An immediate way to proceed to obtain sufficient conditions for equi-dispersion would be to require orderliness in the sense of Daley and Vere-Jones \citep[page 47]{daley2003}, i.e. $P\big(\Delta N(t) \geq 2\big) = o(\dt)$, and to investigate under which conditions the $\dt$ limit can be exchanged with the limit of the infinite sum in~\eqref{eqn:equi-disp-intuition}.
In Theorem~\ref{thm:suf-cond-equidisp} we present such a result.
We use the dominated convergence theorem to show that the limits commute under standard moment existence assumptions for a univariate simple MCP, which proves that it is equi-dispersed.

An implication of Theorem~\ref{thm:suf-cond-equidisp} is that the Poisson process is not the only equi-dispersed counting process.
In particular, Corollaries \ref{cor:equi-birth} and \ref{cor:equi-death} point out that the linear death process (or rather, the counting process associated with it) and the linear birth process, both extensively studied and used in applications, are also seen to be infinitesimally over-dispersed.
Nonetheless, these two processes are integrally under- and over-dispersed respectively. These integrated dispersion constraints are summarized in table~\ref{table:moments} and are a direct result of the well-know property that their increments follow binomial and negative binomial distributions respectively. Another implication, pointed out in Corollaries \ref{thm:suf-cond-mixed-equidisp} and \ref{cor:equi-mixed-Pois}, is that \emph{mixing} equi-dispersed MCPs with random variables does not alter dispersion.
The fact that both the mixed Poisson process and the birth process (with negative binomial increments) turn out to be integrally over-dispersed but infinitesimally equi-dispersed might be unexpected.

The moment existence conditions we use in our results concern the total number of events that a MCP $\{N(t)\}$ makes in an interval $[t,t+\bar h]$.
Specifically, define a stochastic bound of the infinitesimal rate function  $\lambda(N(s))$ conditional on $N(t) = n$ by
\begin{equation}\label{eqn:barLambda}
\bar\Lambda(t)=\sup_{t\le s\le t +\bar h}\lambda\big(N(s)\big).
\end{equation}
Here, we suppress the dependence of $\bar\Lambda(t)$ on $n$ and $\bar h$.
Now consider the following two properties:
\begin{itemize}
\item[\bf P1.] For each $t$ and $n$ there is some $\bar h>0$ such that $E[\bar \Lambda(t)]<\infty$.

\item[\bf P2.] For each $t$ and $n$ there is some $\bar h>0$ such that $V[\bar \Lambda(t)]<\infty$.
\end{itemize}
Properties P1 and P2 require that the MCP does not have explosive behavior, and in particular they hold for any \emph{uniform} MCP (i.e., a MCP for which $\q[n,k][][] \equiv \q[k][][]$) in which the jumps are bounded by some $k_0$ (i.e., when for all $k>k_0$, $\q[n,k][][] \equiv 0$).
P1 and P2 also hold for the simple, linear birth process and for the associated counting process to the simple, linear death process of Corollaries~\ref{cor:equi-birth} and~\ref{cor:equi-death}.

\begin{theorem}[sufficient condition for Markov infinitesimal equi-dispersion]\label{thm:suf-cond-equidisp}
Let $\{N(t)\}$ be a simple, time homogeneous, stable and conservative Markov counting process.
Supposing (P1), the infinitesimal mean is the same as the infinitesimal rate.
Supposing (P2), the infinitesimal variance is also the same as the infinitesimal rate, and therefore $\{N(t)\}$ is infinitesimally equi-dispersed.
\end{theorem}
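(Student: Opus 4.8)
The plan is to reduce the whole statement to the single fact that, for a simple process, the contribution of ``two-or-more events in $[t,t+h]$'' is negligible after dividing by $h$. Starting from the moment split already displayed in~\eqref{eqn:equi-disp-intuition}, I would write, for $r=1,2$,
$
h^{-1}E[\Delta N^r(t)\mid N(t)=n]=h^{-1}P(\Delta N(t)=1\mid N(t)=n)+h^{-1}\sum_{k\ge 2}k^r\,P(\Delta N(t)=k\mid N(t)=n).
$
By the very definition~\eqref{eqn:CP-inf-prob} and simpleness ($\lambda(n)=\q[n,1][][]$), the first term already converges to $\lambda(n)$, so the entire content of the theorem lies in showing that the tail sum vanishes in the limit. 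This is where the dominated convergence theorem must do the work, and it is the reason the moment conditions (P1)--(P2) on $\bar\Lambda(t)$ from~\eqref{eqn:barLambda} are imposed.

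The key estimate I would establish is a stochastic domination: conditionally on $\bar\Lambda(t)$, the increment $\Delta N(t)$ is dominated by a Poisson random variable $M$ with mean $\bar\Lambda(t)\,h$, for every $h\le\bar h$. The mechanism is a thinning coupling, running a homogeneous Poisson process of rate $\bar\Lambda(t)$ and accepting each candidate event with probability $\lambda(N(s^-))/\bar\Lambda(t)\le 1$, so that on $[t,t+\bar h]$ the events of $\{N(t)\}$ form a subset of the candidates and hence $\Delta N(t)\le M$. Applying this to the nondecreasing functions $x\mapsto x\,\mathbb{I}\{x\ge 2\}$ and $x\mapsto x^2\,\mathbb{I}\{x\ge 2\}$, domination yields $E[\phi(\Delta N(t))\mid\bar\Lambda(t)]\le E[\phi(M)\mid\bar\Lambda(t)]$, where the Poisson moments are explicit: $E[M\,\mathbb{I}\{M\ge 2\}\mid\bar\Lambda(t)]=\bar\Lambda(t)h\,(1-e^{-\bar\Lambda(t)h})$ and $E[M^2\,\mathbb{I}\{M\ge 2\}\mid\bar\Lambda(t)]=\bar\Lambda(t)h\,(1-e^{-\bar\Lambda(t)h})+(\bar\Lambda(t)h)^2$.

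With these in hand both claims follow by dominated convergence. For the mean, $0\le h^{-1}\sum_{k\ge 2}k\,P(\Delta N(t)=k\mid N(t)=n)\le E[\bar\Lambda(t)(1-e^{-\bar\Lambda(t)h})]$; the integrand tends to $0$ pointwise and is bounded by $\bar\Lambda(t)$, integrable under (P1), so the tail vanishes and $\mu_{dN}=\lambda(n)$. For the variance, the same scheme bounds the tail by $E[\bar\Lambda(t)(1-e^{-\bar\Lambda(t)h})+\bar\Lambda(t)^2 h]$, with integrand vanishing pointwise and now dominated by $2\bar h\,\bar\Lambda(t)^2$; here (P2) together with (P1) secures $E[\bar\Lambda(t)^2]<\infty$, so this tail also vanishes and $h^{-1}E[\Delta N^2(t)\mid N(t)=n]\to\lambda(n)$. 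Since $h^{-1}\big(E[\Delta N(t)\mid N(t)=n]\big)^2=O(h)\to 0$, the infinitesimal variance in~\eqref{eqn:DispIndex} equals $\lambda(n)$ as well, whence $D_{dN}(n)=1$ and the process is equi-dispersed.

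The hard part will be making the stochastic domination rigorous, because $\bar\Lambda(t)$ is defined through the whole trajectory on $[t,t+\bar h]$ and so is not independent of the very events being counted; the thinning argument as stated uses a rate that depends on the sample path. I would resolve this by conditioning on the level of $\bar\Lambda(t)$: for a deterministic threshold $c$, the coupling against a rate-$c$ Poisson process is valid on the event $\{\bar\Lambda(t)\le c\}$, and then a partition of the range of $\bar\Lambda(t)$, combined with monotonicity of the Poisson tail in its mean, recovers the conditional domination by $\mathrm{Poisson}(\bar\Lambda(t)h)$ as the partition is refined. Once this coupling is secured, the remaining steps---the finite/tail split, the interchange of limit and sum, and the verification of the dominating functions---are routine.
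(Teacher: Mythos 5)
Your proposal is correct and follows essentially the same route as the paper's proof: split off the single-event term, dominate $\Delta N(t)$ by a conditional Poisson increment with rate $\bar\Lambda(t)$, compute the explicit Poisson tail moments $\bar\Lambda(t)\dt\bigl(1-e^{-\bar\Lambda(t)\dt}\bigr)$ and $\bar\Lambda(t)\dt + \bar\Lambda^2(t)\dt^2 - \bar\Lambda(t)\dt\,e^{-\bar\Lambda(t)\dt}$, and conclude by dominated convergence under (P1) and (P2) with dominating functions $\bar\Lambda(t)$ and a multiple of $\bar\Lambda^2(t)$. The only difference is that you explicitly flag and sketch a thinning-coupling justification of the stochastic domination (which the paper simply asserts from simpleness), a worthwhile refinement but not a different argument.
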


\begin{proof}
Let $P(t)$ be a conditional Poisson process with event rate $\bar{\Lambda}(t)$ and work conditionally on $N(t)=n$ whenever the (potentially already conditional) expectation is taken over $\Delta N(t)$ or functions of it.
Then, since $\Delta N(t)$ is non-negative,
\begin{eqnarray}
\label{eqn:thm-suff-equi-I}
E[\Delta N(t)] = E[\Delta N(t)\Ind{\Delta N(t) > 0}] = E\Bigl[\Ind{\Delta N(t)=1} + \Delta N(t) \Ind{\Delta N(t) > 1}\Bigr].
\end{eqnarray}
Now, it is immediate that $E\big[\Ind{\Delta N(t)=1}\big]=\lambda(n)\dt + o(\dt)$.
Also, since $\{N(t)\}$ is simple, $\{\Delta N(t)\}$ is stochastically smaller than $\{\Delta P(t)\}$ and
\begin{eqnarray}
\nonumber E\big[\Delta N(t) \Ind{\Delta N(t) > 1}\big] &\leq& E\big[\Delta P(t) \Ind{\Delta P(t) > 1}\big]\\
\nonumber &=& E\Bigl[E\big[\Delta P(t) \Ind{\Delta P(t) > 1}|\bar{\Lambda}(t)\big]\Bigr].
\end{eqnarray}
Using~\eqref{eqn:thm-suff-equi-I} with $N(t)$ replaced by $P(t)$, noting also that $E\big[\Delta P(t)|\bar{\Lambda}(t)\big]=\dt \bar{\Lambda}(t)$ and $E\big[\Ind{\Delta P(t)=1}|\bar{\Lambda}(t)\big]=\dt \bar{\Lambda}(t) \exp\big\{-\dt \bar{\Lambda}(t)\big\}$, it follows that
\begin{eqnarray}
\nonumber E\big[\Delta N(t) \Ind{\Delta N(t) > 1}\big] &\leq& E\Big[\dt \bar{\Lambda}(t) - \dt \bar{\Lambda}(t) \exp\big\{-\dt \bar{\Lambda}(t)\big\}\Big]\\
\nonumber &=& E\Big[\dt \bar{\Lambda}(t)\Big(1 - \exp\{-\dt \bar{\Lambda}(t)\}\Big)\Big].%
\end{eqnarray}
It follows by dominated convergence, since $\bar{\lambda}\big(1 - \exp\{-\dt \bar{\lambda}\}\big) \leq \bar{\lambda}$ and by the assumption that $E[\bar{\Lambda}(t)]$ is finite (note that the distribution of $\bar{\Lambda}(t)$ depends on $\dtt$ and not $\dt$), that
\begin{eqnarray}
\nonumber \lim\limits_{\dt \downarrow 0} \frac{E\Big[\dt \bar{\Lambda}(t)\Big(1 - \exp\{-\dt \bar{\Lambda}(t)\}\Big)\Big]}{\dt} &=& E\bigg[\lim\limits_{\dt \downarrow 0} \bar{\Lambda}(t)\Big(1 - \exp\{-\dt \bar{\Lambda}(t)\}\Big)\bigg] = 0.%
\end{eqnarray}
Therefore, $E\big[\Delta N(t) \Ind{\Delta N(t) > 1}\big]=o(\dt)$ and $E[\Delta N(t)] = \lambda(n)\dt + o(\dt)$.

Similarly, replacing first by second moments, $E\big[(\Delta N(t))^2 \Ind{\Delta N(t) > 1}\big]=o(\dt)$ and $E[(\Delta N(t))^2] = \lambda(n)\dt + o(\dt)$, since
\begin{eqnarray}
\nonumber E\big[(\Delta N(t))^2 \Ind{\Delta N(t) > 1}\big] &\leq& E\big[(\Delta P(t))^2 \Ind{\Delta P(t) > 1}\big]\\
\nonumber &=& E\Big[E\big[(\Delta P(t))^2 \Ind{\Delta P(t) > 1}|\bar{\Lambda}(t)\big]\Big]\\
\nonumber &=& E\big[\dt \bar{\Lambda}(t) + \dt^2 \bar{\Lambda}^2(t) - \dt \bar{\Lambda}(t) \exp\{-\dt \bar{\Lambda}(t)\}\big]\\
\nonumber &\leq& E\big[2\dt^2 \bar{\Lambda}^2(t)\big]=o(\dt),%
\end{eqnarray}
where the last line follows by $1-\exp\{-x\} \leq x$ and $E[\bar{\Lambda}^2(t)]$ being finite.

Equi-dispersion follows from $V[\Delta N(t)] = E\big[(\Delta N(t))^2\big] - E[\Delta N(t)]^2 = \lambda(n)\dt + o(\dt)$, where  $E[\Delta N(t)]^2$ is $o(\dt)$ by stability of $\{N(t)\}$ which implies $\lambda(n) < \infty$ for all $n$.
\end{proof}

\begin{corollary}[necessary condition for Markov infinitesimal over-dispersion]\label{cor:nec-cond-overdisp}
Let $\{N(t)\}$ be a simple, time homogeneous, stable and conservative Markov counting process.
If, supposing (P1), the infinitesimal mean is not the same as the infinitesimal rate or if, supposing (P2), the infinitesimal variance is not the same as the infinitesimal rate or the process is not infinitesimally equi-dispersed, then $\q[n,j][][] \neq 0$ for some $j \geq 2$, i.e., $\{N(t)\}$ must be a compound process.
\end{corollary}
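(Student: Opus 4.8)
The plan is to obtain this corollary as the exact logical contrapositive of Theorem~\ref{thm:suf-cond-equidisp}, with no new computation required. That theorem supplies, for any simple, time homogeneous, stable and conservative MCP, two implications: under (P1) the infinitesimal mean equals the infinitesimal rate, and under (P2) the infinitesimal variance also equals the infinitesimal rate, so the process is equi-dispersed. Each alternative appearing in the corollary's hypothesis is precisely the negation of one of these conclusions. The first step is therefore to contrapose: if, under (P1), the mean differs from the rate, or if, under (P2), the variance differs from the rate or equi-dispersion fails, then the process cannot satisfy the hypothesis of the theorem, and in particular it cannot be simple.

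The second step is to translate the resulting statement ``not simple'' into the infinitesimal condition $q(n,j)\neq 0$ for some $j\ge 2$. Here I would invoke the fact, recorded earlier in the excerpt, that a stable and conservative MCP is completely characterized by its infinitesimal probabilities $q(n,k)$, together with the observation that a counting process increases only at its jump times. A simple process has, with probability one, no instant at which two or more events occur, so all of its jumps have size one; infinitesimally this forces $q(n,k)=0$ for every $k\ge 2$ and every $n$. Conversely, a positive infinitesimal probability $q(n,j)>0$ for some $j\ge 2$ yields jumps of size at least two, which are exactly simultaneous events, making the process compound. Hence the negation of simpleness is equivalent to $q(n,j)\neq 0$ for some $j\ge 2$, which is the asserted conclusion.

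The main obstacle, and the only point needing care, is this equivalence between the pathwise notion of simpleness (no simultaneous event times) and the infinitesimal notion of no multi-unit jumps. Because the excerpt deliberately distinguishes jump times from event times, I would make explicit that for a counting process the sole way the value can increase by $j\ge 2$ at a single jump time is for $j$ events to coincide there, so a jump of size $\ge 2$ is synonymous with a simultaneous occurrence of events. Once this identification is in place, the corollary follows immediately from Theorem~\ref{thm:suf-cond-equidisp} by contraposition, and no separate estimation of infinitesimal moments via dominated convergence is needed.
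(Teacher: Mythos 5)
Your proposal is correct and follows essentially the same route as the paper, whose proof is exactly the one-line contraposition of Theorem~\ref{thm:suf-cond-equidisp}. Your additional care in spelling out that simpleness is equivalent to $\q[n,k][][]=0$ for all $k\ge 2$ (jumps of size $\ge 2$ being precisely simultaneous events) is left implicit in the paper, which already identifies these two notions in the statement of the corollary itself.
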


\begin{proof}
Otherwise, by Theorem~\ref{thm:suf-cond-equidisp}, the infinitesimal mean of $\{N(t)\}$ must coincide with the infinitesimal rate under P(1) and so must the infinitesimal variance under P(2) respectively, i.e. $\{N(t)\}$ must be infinitesimally equi-dispersed.
\end{proof}
\begin{corollary}[infinitesimal equi-dispersion of birth process]\label{cor:equi-birth}
A MCP with
$\q[n,1][][] = \beta n \Ind{n > 0}$
and $\q[n,k][][] = 0$ for $k > 1$ is a simple linear birth process for $\beta \in \R^+$ and is infinitesimally equi-dispersed.
\end{corollary}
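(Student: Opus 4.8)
The plan is to verify that this process satisfies the hypotheses of Theorem~\ref{thm:suf-cond-equidisp} and then invoke that theorem directly. First I would confirm the structural properties. Since $\q[n,k][][] = 0$ for all $k > 1$, the process admits only jumps of size one and is therefore simple; the rates do not depend on $t$, so it is time homogeneous; and its rate function is $\lambda(n) = \sum_{k \ge 1}\q[n,k][][] = \q[n,1][][] = \beta n$, which is finite for every $n$, so the process is stable and conservative. The identification of this MCP as the familiar linear (Yule) birth process follows from the standard correspondence between an MCP and its infinitesimal probabilities for stable, conservative chains, with state $0$ absorbing because of the $\Ind{n>0}$ factor.

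The substance of the argument is the verification of properties (P1) and (P2), since these are precisely what Theorem~\ref{thm:suf-cond-equidisp} requires in order to conclude that the infinitesimal mean and variance both equal $\lambda(n)$, hence equi-dispersion. The key simplification here is that a birth process is non-decreasing, so the supremum defining $\bar\Lambda(t)$ in~\eqref{eqn:barLambda} is attained at the right endpoint of the window:
\[
\bar\Lambda(t) = \sup_{t \le s \le t + \bar h} \beta\, N(s) = \beta\, N(t + \bar h).
\]
It then suffices to show that, conditionally on $N(t) = n$, the random variable $N(t + \bar h)$ has finite first and second moments for some (equivalently, every) $\bar h > 0$.

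To control these moments I would appeal to the classical distribution theory of the linear birth process: starting from $n$ individuals, each reproducing independently at rate $\beta$, the count $N(t + \bar h)$ conditional on $N(t) = n$ has a negative binomial distribution with index $n$ and success probability $e^{-\beta \bar h}$. Its mean $n\,e^{\beta \bar h}$ and its variance are finite for every finite $\bar h$, so that $E[\bar\Lambda(t)] = \beta\, n\, e^{\beta \bar h} < \infty$ and $V[\bar\Lambda(t)] = \beta^2\, V[N(t+\bar h) \mid N(t) = n] < \infty$, establishing (P1) and (P2). The conclusion of infinitesimal equi-dispersion is then immediate from Theorem~\ref{thm:suf-cond-equidisp}.

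I expect the main obstacle to be this moment bound, which at its root is the statement that the linear birth process is non-explosive on a finite time horizon: although the conditional mean grows exponentially in $\bar h$, it stays finite because the per-capita rate $\beta$ is constant, whereas a superlinear dependence of the rate on $n$ could drive the process to infinity in finite time and make these moments diverge. Monotonicity of the process is what reduces the supremum to an endpoint evaluation and thereby makes the negative binomial moment computation directly applicable; the remaining steps are routine.
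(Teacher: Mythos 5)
Your proof is correct, but it takes a genuinely different route from the paper's. The paper disposes of this corollary in one line, as a special case of the multivariate Corollary~\ref{cor:equi-disp-birth-death}, which is in turn proved via Corollary~\ref{cor:suf-cond-overdisp} and Theorem~\ref{thm:suf-cond-mult-build-block}: there the birth--death rate $(\beta+\delta)X_2(t)$ is stochastically bounded by $(\beta+\delta)B(t)$ for a simple linear birth process $\{B(t)\}$, the supremum over $[t,t+\bar h]$ is evaluated at the right endpoint by monotonicity, and finiteness of the first two moments of $B(t+\bar h)$ yields (P3) and (P4), which for a simple process (jump bound $\bar Z_{ij}\equiv 1$) reduce exactly to your (P1) and (P2). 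You instead apply the univariate Theorem~\ref{thm:suf-cond-equidisp} directly, which is more elementary and self-contained: it stays entirely within the univariate framework and avoids the MCS formalism, at the cost of not simultaneously covering the death process and the coupled birth--death system, which is what the paper's detour buys. The substantive verification is identical in both arguments: reduction of $\bar\Lambda(t)$ in \eqref{eqn:barLambda} to $\beta N(t+\bar h)$ by monotonicity of the sample paths together with monotonicity of $\lambda(n)=\beta n$, followed by the negative binomial moment computation, with $E[N(t+\bar h)\mid N(t)=n]=n e^{\beta\bar h}$ and variance $n e^{\beta\bar h}(e^{\beta\bar h}-1)$ both finite. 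Indeed, the paper asserts just before Theorem~\ref{thm:suf-cond-equidisp} that (P1) and (P2) hold for the linear birth process without giving details, and your argument supplies exactly that verification. One small point worth making explicit: at $n=0$ both infinitesimal moments vanish, so the dispersion index \eqref{eqn:DispIndex} is undefined there; the paper handles the analogous degeneracy in Corollary~\ref{cor:equi-disp-birth-death} by restricting to $x_2>0$, and your remark that $0$ is absorbing covers the same ground, but you should state that equi-dispersion is claimed only for $n\ge 1$.
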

\begin{proof}
This is a special cases of the multivariate Corollary~\ref{cor:equi-disp-birth-death} which is proved in Section~\ref{sec:notation-mult}.
\end{proof}

\begin{corollary}[infinitesimal equi-dispersion of death process]\label{cor:equi-death}
A MCP with $\q[n,1][][] = \delta (d_0 - n) \Ind{n < d_0}$ and $\q[n,k][][] = 0$ for $k > 1$ is the counting process associated with $\{\tilde N(t)\}$, a simple linear death process with initial population $d_0 \;\inNat$, for $\delta \in \R^+$ and is infinitesimally equi-dispersed.
\end{corollary}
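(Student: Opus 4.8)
The plan is to prove the two assertions separately: first, that the stated infinitesimal probabilities are exactly those of the counting process of a simple linear death process, and second, that this process is infinitesimally equi-dispersed, which I would obtain by checking the hypotheses of Theorem~\ref{thm:suf-cond-equidisp}.

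For the identification I would invoke the fact, already used for stable and conservative MCPs, that such a process is characterized by its infinitesimal probabilities; it therefore suffices to verify that the counting process $N(t)=d_0-\tilde N(t)$ of a linear death process $\{\tilde N(t)\}$ with per-capita death rate $\delta$ and initial population $d_0$ has the stated local characteristics. Conditional on $N(t)=n$ there are $\tilde N(t)=d_0-n$ survivors running independent exponential clocks of rate $\delta$; the chance that exactly one clock rings in $[t,t+\dt]$ is $(d_0-n)\delta\dt+o(\dt)$, giving $\q[n,1][][]=\delta(d_0-n)\Ind{n<d_0}$, while the chance of two or more simultaneous rings is $o(\dt)$, giving $\q[n,k][][]=0$ for $k\ge 2$. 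Summing yields $\lambda(n)=\delta(d_0-n)<\infty$, so $\{N(t)\}$ is simple, time homogeneous, stable and conservative, as required by Theorem~\ref{thm:suf-cond-equidisp}.

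It then remains only to verify (P1) and (P2). The key observation is that $\tilde N$ is non-increasing, so conditional on $N(t)=n$ its supremum over $[t,t+\bar h]$ is attained at $s=t$; writing $\lambda(N(s))=\delta\tilde N(s)$ gives
\[
\bar\Lambda(t)=\sup_{t\le s\le t+\bar h}\lambda\big(N(s)\big)=\delta\sup_{t\le s\le t+\bar h}\tilde N(s)=\delta(d_0-n).
\]
Thus $\bar\Lambda(t)$ is deterministic, so $E[\bar\Lambda(t)]=\delta(d_0-n)<\infty$ and $V[\bar\Lambda(t)]=0<\infty$, and both (P1) and (P2) hold for every $\bar h>0$. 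Theorem~\ref{thm:suf-cond-equidisp} then yields $\mu_{dN}=\sigma^2_{dN}=\lambda(n)=\delta(d_0-n)$, so that $D_{dN}(n)=1$ for every $n<d_0$ (the range on which the index is defined), establishing infinitesimal equi-dispersion.

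I expect no genuine obstacle in this corollary: because the rate is deterministically bounded by $\delta d_0$, the moment conditions reduce to triviality and the delicate dominated-convergence argument is entirely absorbed into Theorem~\ref{thm:suf-cond-equidisp}. The only steps needing care are the identification with the death process and the observation that monotonicity makes $\bar\Lambda(t)$ bounded (indeed constant) given $N(t)=n$. As an alternative one could, exactly as for the birth process in Corollary~\ref{cor:equi-birth}, deduce the result from the multivariate Corollary~\ref{cor:equi-disp-birth-death}; I would favour the direct argument above since the univariate death process makes the finiteness of the moments of $\bar\Lambda(t)$ especially transparent.
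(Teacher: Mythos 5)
Your proposal is correct, but it takes a genuinely different route from the paper. The paper dispatches this corollary in one line, citing it as a special case of the multivariate Corollary~\ref{cor:equi-disp-birth-death}, whose proof runs through the Markov counting system machinery: Corollary~\ref{cor:suf-cond-overdisp} and Theorem~\ref{thm:suf-cond-mult-build-block} (which in turn rests on Lemma~\ref{lem:prob-Scomp-little-oh}), with the moment conditions (P3)--(P4) verified by stochastically bounding the rate $\lambda^{BD}\big(\bm{X}^{BD}(t)\big)=(\beta+\delta)X_2^{BD}(t)\Ind{X_2^{BD}(t)>0}$ by $(\beta+\delta)B(t)$ for a linear birth process $\{B(t)\}$. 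You instead argue directly in the univariate setting of Theorem~\ref{thm:suf-cond-equidisp}, and your key observation --- that monotonicity of $\tilde N$ forces $\bar\Lambda(t)=\sup_{t\le s\le t+\dtt}\delta\tilde N(s)=\delta(d_0-n)$ to be \emph{deterministic} given $N(t)=n$, so that (P1) and (P2) hold trivially with $V[\bar\Lambda(t)]=0$ --- is correct and is in fact sharper than what the paper's route provides here, since for the pure death process no stochastic bounding by an auxiliary birth process is needed at all (the paper's bound is tailored to make the birth and birth-death cases work uniformly). Your identification of the stated local characteristics with the death-process counting process via independent exponential clocks is also sound, and the paper implicitly relies on the same characterization of stable, conservative MCPs by their infinitesimal probabilities. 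What your direct argument buys is a short, self-contained, elementary proof; what the paper's detour buys is economy across Corollaries~\ref{cor:equi-birth} and~\ref{cor:equi-death} simultaneously and a first illustration of the multivariate framework that Sections~\ref{sec:notation-mult}--\ref{sec:OMCS} develop. Your handling of the boundary case $n\ge d_0$ (where $\mu_{dN}=\sigma^2_{dN}=0$ and the index is undefined) correctly follows the paper's convention of restricting the dispersion statement to the subset of $n$ where it is defined.
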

\begin{proof}
This is a special cases of the multivariate Corollary~\ref{cor:equi-disp-birth-death} which is proved in Section~\ref{sec:notation-mult}.
\end{proof}

\begin{table}\label{table:moments}
\centering 
\begin{tabular}{@{}c | c@{} @{}c c@{}} 
\hline\hline 
\rule{0pt}{2.5ex}                     &   Poisson     &  Birth                               &   Death \\ [0.5ex] 
\hline 
\rule{0pt}{4ex} $E[\Delta N(t)|N(t)=n]$ & $\alpha \dt$ & $n(e^{\beta\dt}-1)$               & $(d_0 - n)(1 - e^{-\delta\dt})$ \\ [0ex] 
\rule{0pt}{4ex} $V[\Delta N(t)|N(t)=n]$ & $\alpha \dt$ & $ne^{\beta\dt}(e^{\beta\dt} - 1)$ & $(d_0 - n)(1 -e^{-\delta\dt})e^{-\delta\dt}$ \\ [0ex] 
\rule{0pt}{4ex} $D_N(n_0,t)$           & $1$           & $e^{\beta t}$                     & $e^{-\delta t}$ \\ [0ex] 
\rule{0pt}{4ex} $D_{dN}(n)$          & $1$           & $1$                               & $1$  \\ [0ex] 
\hline 
\hline 
\end{tabular}
\caption{ Increment mean, increment variance and dispersion indices of the time homogeneous (a) Poisson process (with individual infinitesimal rate $\alpha$), (b) linear birth process (with individual infinitesimal rate $\beta$ and initial population $n$) and (c) counting process associated with $\{\tilde N(t)\}$, a linear death process (with individual infinitesimal rate $\delta$ and initial population $d_0$).
}
\end{table}

\subsection{Dispersion of mixed Markov counting processes}\label{sec:nonMarkov}

The mixed Poisson process in Daley and Vere-jones \cite{daley2003}, which Snyder and Miller \cite{snyder1991} call  P\'{o}lya process, is a natural extension of the Poisson process where a \emph{mixing} random variable $M$ is used as the rate to define a Poisson process conditional on $M$.
An immediate result of this mixing is that the resulting process is integrally over-dispersed.
It is straightforward to generalize this notion to simple \emph{mixed MCPs}, where $\{N(t)\}$ is specified as a MCP conditional on $M$ with rate function $\Lambda(n) \equiv \lambda(n,M)$.
Mixed MCPs are non-Markovian but the measures of dispersion defined in~\eqref{eqn:DispIndex} and~\eqref{eqn:IntDisp} can still be computed and discussed.
For non-Markovian processes, conditioning on the entire past history in~\eqref{eqn:DispIndex} could also be considered.

Theorem~\ref{thm:suf-cond-mixed-equidisp} extends Theorem~\ref{thm:suf-cond-equidisp} by showing that conditions P1 and P2 ensure the equidispersion of simple mixed MCPs.
Here, the analogous definition to~\eqref{eqn:barLambda} is
$\bar\Lambda(t)=\sup_{t\le s \le t + \bar h}\Lambda\big(N(s)\big)$.
A proof of Theorem~\ref{thm:suf-cond-mixed-equidisp} is given in \ref{app:proof-overd-univ}.
In the context of mixed MCPs, P1 or P2 imply that $E[\Lambda(n)] < \infty$, so the tails of the additional randomness resulting from $M$ are required to be not too heavy.

\begin{theorem}[sufficient condition for mixed Markov infinitesimal equi-dispersion]\label{thm:suf-cond-mixed-equidisp}
Let $\{N(t)\}$ be a simple, time homogeneous, stable and conservative Markov counting process conditionally on a mixing random variable $M$.
Supposing (P1), the infinitesimal mean is the same as the average infinitesimal rate.
Supposing (P2), the infinitesimal variance is also the same as the average infinitesimal rate, and therefore $\{N(t)\}$ is infinitesimally equi-dispersed.
\end{theorem}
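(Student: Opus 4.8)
The plan is to reduce the statement to Theorem~\ref{thm:suf-cond-equidisp} by conditioning on the mixing variable $M$ and then averaging over it. Conditionally on $M$, the process $\{N(t)\}$ is a genuine simple, time homogeneous, stable and conservative Markov counting process with rate function $\Lambda(n)=\lambda(n,M)$, so each step of the proof of Theorem~\ref{thm:suf-cond-equidisp} applies with $\lambda(n)$ replaced by $\Lambda(n)$ and with $\bar\Lambda(t)=\sup_{t\le s\le t+\dtt}\Lambda(N(s))$ now depending on both the path and $M$. In particular, the key stochastic domination of $\Delta N(t)$ by the increment of a conditional Poisson process $P(t)$ of rate $\bar\Lambda(t)$ is a statement that holds conditionally on $M$; since it is a pathwise, conditional inequality, the resulting moment bounds persist after the outer expectation over $M$ is taken.

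First I would work conditionally on $N(t)=n$ throughout and split the first moment as
\begin{eqnarray}
\nonumber E[\Delta N(t)] &=& E\big[\Ind{\Delta N(t)=1}\big] + E\big[\Delta N(t)\Ind{\Delta N(t)>1}\big].
\end{eqnarray}
For the error term, the conditional domination gives, exactly as in Theorem~\ref{thm:suf-cond-equidisp},
\begin{eqnarray}
\nonumber E\big[\Delta N(t)\Ind{\Delta N(t)>1}\big] &\le& E\Big[\dt\,\bar\Lambda(t)\big(1-\exp\{-\dt\,\bar\Lambda(t)\}\big)\Big],
\end{eqnarray}
where the outer expectation is now over both the path and $M$; since $\bar\Lambda(t)\big(1-\exp\{-\dt\,\bar\Lambda(t)\}\big)\le\bar\Lambda(t)$ and P1 supplies $E[\bar\Lambda(t)]<\infty$ on this enlarged space, dominated convergence forces this term to be $o(\dt)$. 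For the leading term, conditioning on $M$ yields $\dt^{-1}E\big[\Ind{\Delta N(t)=1}\,\big|\,M\big]\to\Lambda(n)$ while the same domination bounds $\dt^{-1}E\big[\Ind{\Delta N(t)=1}\,\big|\,M\big]$ by the integrable $E[\bar\Lambda(t)\mid M]$; a further application of dominated convergence over $M$ then gives $\dt^{-1}E\big[\Ind{\Delta N(t)=1}\big]\to E[\Lambda(n)]$. Hence the infinitesimal mean equals the average infinitesimal rate $E[\Lambda(n)]$, which P1 guarantees is finite.

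For the variance I would repeat the argument with second moments. The conditional domination gives
\begin{eqnarray}
\nonumber E\big[(\Delta N(t))^2\Ind{\Delta N(t)>1}\big] &\le& E\big[\dt\,\bar\Lambda(t)+\dt^2\bar\Lambda^2(t)-\dt\,\bar\Lambda(t)\exp\{-\dt\,\bar\Lambda(t)\}\big]\\
\nonumber &\le& 2\,E\big[\dt^2\bar\Lambda^2(t)\big],
\end{eqnarray}
which is $o(\dt)$ since P2, together with $E[\bar\Lambda(t)]<\infty$, yields $E[\bar\Lambda^2(t)]<\infty$. Combined with the first-moment computation this gives $E[(\Delta N(t))^2]=E[\Lambda(n)]\dt+o(\dt)$, and because $E[\Delta N(t)]^2=o(\dt)$ by stability we obtain $V[\Delta N(t)]=E[\Lambda(n)]\dt+o(\dt)$, so that $\{N(t)\}$ is equi-dispersed.

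The step I expect to be the main obstacle is justifying the interchange of the $\dt\downarrow0$ limit with the expectation over the enlarged probability space that now includes $M$, and in particular controlling the remainder in the leading term uniformly enough in $M$. The resolution is exactly that P1 and P2 are stated for the mixed $\bar\Lambda(t)$, which already absorbs both the supremum over the path and the randomness of $M$, thereby furnishing the integrable dominating variables $\bar\Lambda(t)$ and $\bar\Lambda^2(t)$ required for dominated convergence; this is what lets the argument of Theorem~\ref{thm:suf-cond-equidisp} carry over intact. A secondary point to check is that conditioning on $N(t)=n$ reshapes the law of $M$ into its posterior, so that the average infinitesimal rate should be read as $E[\Lambda(n)\mid N(t)=n]$; this does not affect the argument, which uses only the finiteness of that average.
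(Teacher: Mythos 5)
Your proof is correct, and its overall skeleton---condition on $M$, split off $\Ind{\Delta N(t)=1}$, kill the $\Ind{\Delta N(t)>1}$ contribution with the conditional Poisson domination, and let P1/P2 furnish the dominating variables for the limit--expectation interchange---matches the paper's proof in \ref{app:proof-overd-univ}. Where you genuinely diverge is in the leading single-jump term, which is precisely the step where the mixed case differs from Theorem~\ref{thm:suf-cond-equidisp}: the conditional expansion $P(\Delta N(t)=1\mid M, N(t)=n)=\Lambda(n)h+o(h)$ has an $o(h)$ depending on $M$, so one cannot average it naively. The paper resolves this via Lemma~\ref{lem:prob-Scomp-little-oh}, writing the conditional probability exactly as $\Lambda(n)\exp\{-h\Lambda(n+1)\}\phi(h)$, checking case by case that $\phi(h)\le h$ or $\phi(h)=O(h)$, bounding the integrand by $\Lambda(n)$, and applying dominated convergence. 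You instead dominate $h^{-1}P(\Delta N(t)=1\mid M)$ by $E[\bar\Lambda(t)\mid M]$ and pair this with the pointwise conditional convergence to $\Lambda(n)$; this works, though you should spell out the one-line chain $P(\Delta N(t)=1\mid M)\le P(\Delta P(t)\ge 1\mid M)=E\bigl[1-e^{-h\bar\Lambda(t)}\mid M\bigr]\le h\,E[\bar\Lambda(t)\mid M]$, whose right side is integrable over $M$ by P1 and the tower property. Your route is more elementary---it recycles the stochastic domination already needed for the error terms and avoids the exact two-jump-time computation entirely, at the price of a cruder dominating function ($E[\bar\Lambda(t)\mid M]$ rather than $\Lambda(n)$); the paper's route buys an exact single-jump formula that it reuses in the multivariate setting of Theorem~\ref{thm:suf-cond-mult-build-block}, which is presumably why Lemma~\ref{lem:prob-Scomp-little-oh} is invoked here as well. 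Your two closing observations---that P1/P2 are stated for the mixed $\bar\Lambda(t)$ and hence already absorb the randomness of $M$, and that conditioning on $N(t)=n$ turns the law of $M$ into its posterior so the ``average rate'' reads $E[\Lambda(n)\mid N(t)=n]$---correctly identify exactly the points where a naive argument would fail, and the second point is one the paper itself glosses over.
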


\begin{corollary}[infinitesimal equi-dispersion of mixed Poisson process]\label{cor:equi-mixed-Pois}
A conditional MCP with $\q[n,1,M][][] = M$ and $\q[n,k,M][][] = 0$ for $k > 1$ is a mixed Poisson process and is infinitesimally equi-dispersed if $E[M] < \infty$.
\end{corollary}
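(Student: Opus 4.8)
The plan is to derive the result as a direct corollary of Theorem~\ref{thm:suf-cond-mixed-equidisp}. The first step is to recognize the process: conditionally on $M$, the single-event rate is $\q[n,1,M][][]=M$, which does not depend on the state $n$, while every multiple-event infinitesimal probability $\q[n,k,M][][]$ with $k>1$ vanishes. Hence, conditionally on $M$, the process increases by exactly one unit at the constant rate $M$, which is the definition of a Poisson process of rate $M$. Thus $\{N(t)\}$ is a Poisson process conditional on $M$, i.e.\ a \emph{mixed Poisson process} in the sense of Section~\ref{sec:nonMarkov}, settling the descriptive half of the statement and putting us in the exact setting of Theorem~\ref{thm:suf-cond-mixed-equidisp}.

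Next I would check the hypotheses and translate P1 and P2 for this particular process. Simpleness is immediate from $\q[n,k,M][][]=0$ for $k>1$; time homogeneity, stability and conservativeness follow from $\lambda(n,M)=\sum_{k\ge1}\q[n,k,M][][]=M<\infty$ almost surely, which is moreover independent of $t$. The decisive simplification is that the stochastic bound $\bar\Lambda(t)=\sup_{t\le s\le t+\bar h}\Lambda(N(s))$ collapses: because $\Lambda(n)=\lambda(n,M)=M$ does not depend on the state, $\bar\Lambda(t)=M$ for every $\bar h$. Therefore P1 becomes simply $E[M]<\infty$ and P2 becomes $V[M]<\infty$, and Theorem~\ref{thm:suf-cond-mixed-equidisp} delivers that the infinitesimal mean and the infinitesimal variance both equal the average rate $E[M]$, whence $D_{dN}=1$.

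The step I expect to be the main obstacle is reconciling the stated hypothesis $E[M]<\infty$ with the variance conclusion, which through P2 would naively seem to require the stronger $V[M]<\infty$. Rather than invoke the theorem as a black box here, I would carry out the increment moments directly: conditioning on $M$ and then on $N(t)=n$, the increment $\Delta N(t)$ is a mixture of $\mathrm{Poisson}(M\dt)$ variables, so the law of total variance gives $V[\Delta N(t)\mid N(t)=n]=\dt\,E[M\mid N(t)=n]+\dt^{2}\,V[M\mid N(t)=n]$. The inflation term is $O(\dt^{2})$ and hence vanishes upon dividing by $\dt$ and letting $\dt\downarrow0$, so that $\sigma^{2}_{dN}=E[M\mid N(t)=n]=\mu_{dN}$; the only thing left to confirm is finiteness of the conditional second moment of $M$. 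The resolution I anticipate is that conditioning on $N(t)=n$ tilts the law of $M$ by the factor $e^{-Mt}(Mt)^{n}/n!$, whose exponential decay makes every conditional moment of $M$ finite for $t>0$ regardless of the tails of $M$; the assumption $E[M]<\infty$ is then exactly what is needed to keep the average rate, and therefore $\mu_{dN}$ itself, well defined.
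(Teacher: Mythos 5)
Your proposal is correct and is actually more careful than the paper, whose entire proof of this corollary is the single sentence ``This follows directly from Theorem~\ref{thm:suf-cond-mixed-equidisp}.'' Read with the unconditional law of $M$, that citation is slightly too quick: since $\Lambda(n)=M$ is state-independent, $\bar\Lambda(t)=M$ for every $\bar h$, so P1 reduces to $E[M]<\infty$ exactly as you say, but the equi-dispersion half of Theorem~\ref{thm:suf-cond-mixed-equidisp} rests on P2, i.e.\ $V[M]<\infty$, which is strictly stronger than the stated hypothesis $E[M]<\infty$. You identify precisely this mismatch and close it with a computation the paper does not contain: by the law of total variance, $V[\Delta N(t)\mid N(t)=n]=\dt\,E[M\mid N(t)=n]+\dt^{2}\,V[M\mid N(t)=n]$, and the exponential tilt $e^{-Mt}(Mt)^{n}/n!$ induced by conditioning on $N(t)=n$ makes every conditional moment of $M$ finite for $t>0$, so the $O(\dt^{2})$ term vanishes after dividing by $\dt$ and $\sigma^{2}_{dN}=\mu_{dN}$ without any second-moment assumption. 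Comparing the two routes: the paper's argument reuses the dominated-convergence machinery of Theorem~\ref{thm:suf-cond-equidisp} and applies to state-dependent rates $\Lambda(n)$, but as literally stated it needs $V[M]<\infty$; your argument is elementary, exploits the Poisson structure, and shows $E[M]<\infty$ suffices (being needed only so that the average rate, and hence $\mu_{dN}$, is well defined). One small caveat to your last paragraph: at $t=0$ with $N(0)=0$ the tilt is trivial, the conditional law of $M$ is the unconditional one, and if $V[M]=\infty$ then $\dt^{-1}V[\Delta N(0)\mid N(0)=0]=E[M]+\dt\,V[M]$ is infinite for every $\dt>0$, so your derivation (and, in fairness, the corollary itself under only $E[M]<\infty$) delivers equi-dispersion for $t>0$ rather than for all $t$; this is consistent with the paper's stated convention of specifying the subset of $t$ and $n$ on which dispersion conditions hold.
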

\begin{proof}
This follows directly from Theorem~\ref{thm:suf-cond-mixed-equidisp}.
\end{proof}


\section{Over-dispersed Univariate Markov Counting Processes\label{sec:OMCP}}

From Section~\ref{sec:notation}, we know that simple MCPs are equi-dispersed under standard moment conditions.
We therefore seek to generalize standard simple MCP models, to relax this dispersion constraint.
Our first approach is to investigate random time change, or subordination, which we show is equivalent to the inclusion of continuous-time noise in the rate function.
Then, in Section~\ref{sec:binom-beta}, we are led to consider a subtly different approach of defining an over-dispersed MCP via the limit of a sequence of processes in which discrete-time noise is used to modify the rate.

We know from Section~\ref{sec:notation} that introducing noise via a mixing random variable in the rate function does not alter the equi-dispersion of simple MCPs.
In other words, this additional variability disappears infinitesimally. This suggests considering more complex, alternative noise processes.
One possibility is to introduce some continuous-time process, say $\{\eta(t)\}$, in the rate function of the MCP.
Such constructions may be expected to give processes which are Markov conditional on $\{\eta(t)\}$ but not unconditionally.
Our approach is similar to that of \cite{marion2000} and \cite{varughese2008}; we propose defining a process by replacing $\lambda(n)$, the deterministic rate function of the original MCP, in Kolmogorov's backward differential system by the stochastic process
$\big\{\lambda\big(n,\eta(t)\big)\big\}$
(see~\ref{app:KBDS} for a formal definition).
However, by taking $\{\eta(t)\}$ to be a suitable white noise process, we differ from \cite{marion2000} and \cite{varughese2008} by constructing processes which will be shown to be unconditionally Markov.
The consideration of non-white noise is no doubt appropriate in some applications, but white noise provides a relatively simple extension to equi-dispersed processes controlled by a single intensity parameter.
Staying within the class of Markov processes also facilities both theoretical and numerical analysis of the resulting models.

The noise process $\{\eta(t)\}$ could enter $\lambda(n)$ additively or multiplicatively.
Given the non-negativity constraint on the infinitesimal rate functions, multiplicative non-negative noise is a simple and convenient choice.
We refer to white noise, $\{\xi(t)\} \equiv \{dL(t)/dt\}$, as the derivative of an \emph{integrated noise} process $\{L(t)\}$ which has stationary independent increments.
Note that we do not necessarily require that the mean of $L(t)$ is zero.
Although $\{\xi(t)\}$ may not exist, in the sense that $\{L(t)\}$ may not have differentiable sample paths,  $\{\xi(t)\}$ can nevertheless be given formal meaning \citep{karlin1975,breto2009-aoas}.
Restricting $\{\xi(t)\}$ to non-negative white noise, the family of increasing L\'{e}vy processes provides a rich class from which to choose the integrated noise $\{L(t)\}$.
Multiplicative unbiased noise is achieved by requiring $E[L(t)]=t$, in which case  $\lim_{\dt \downarrow 0}E[\Delta L(t)]\big/\dt = 1$.

From an alternative perspective, in the context of the general theory of Markov processes, random time change or subordination of an initial process is a well established tool to obtain new processes.
Following Sato \cite{sato1999}, let $\{M(t)\}$ (the \emph{directing} process) be a temporally homogeneous Markov process and $\{L(t)\}$ (the \emph{subordinator}) be an increasing L\'{e}vy process.
Any temporally homogeneous Markov process $\{N(t)\}$ identical in law to $\{M \circ L(t)\} \equiv \big\{M\big(L(t)\big)\big\}$ is said to be \emph{subordinate} to $\{M(t)\}$ by the subordinator $\{L(t)\}$.

Theorem~\ref{thm:KBDS} below (proved in \ref{app:KBDS}) formally states that subordinate processes to simple (and hence equi-dispersed) MCPs are equivalent to solutions of L\'{e}vy-driven stochastic differential equations resulting from introducing unbiased multiplicative L\'{e}vy white-noise in the deterministic Kolmogorov backward differential system of the directing process.
This gives us a licence to interpret noise on the rate of an MCP as subordination of the MCP to a L\'{e}vy process.
In Subsection~\ref{sec:gamma-MCPs}, we obtain exact results when investigating concrete examples of over-dispersion by exploiting this connection between gamma white noise in the rates and  gamma subordinators.
The general arguments of \ref{app:KBDS} and the particular processes of Subsection~\ref{sec:gamma-MCPs} may both be of interest to the reader, but to preserve the flow of the main themes of this paper we have chosen to defer the technical details involved in the link between subordination and stochastic rates to an appendix.

\begin{theorem}[L\'{e}vy white noise and subordination] \label{thm:KBDS}
Consider the simple, time homogeneous, stable and conservative Markov counting process $\{M_{\lambda}(t)\}$ defined by the rate function $\lambda(m)$.
Let $\{L(t)\}$ be a non-decreasing, L\'{e}vy process with $L(0)=0$ and $E[L(t)]=t$.
Let $\{M_{\lambda \xi}(t)\}$ be the process resulting from introducing unbiased, non-negative, multiplicative, L\'{e}vy white-noise $\{\xi(t)\} \equiv \{dL(t)/dt\}$ in the rate of $\{M_{\lambda}(t)\}$, defined as the solution to the L\'{e}vy-driven Kolmogorov backward differential system in~\eqref{eqn:stoch-KBDS}.
Then, if this solution exits and is unique,
\[
M_{\lambda \xi}(t) \sim M_{\lambda} \circ L(t) \sim M_{\lambda}\Big({\textstyle \int_{0}^{t} } \xi(u)\;du\Big).
\]
\end{theorem}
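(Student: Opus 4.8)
The plan is to treat the last equivalence as a matter of notation and to concentrate on the single substantive claim $M_{\lambda\xi}(t)\sim M_\lambda\circ L(t)$. Since $\{L(t)\}$ is the integrated noise with $L(0)=0$, one has pathwise $\int_0^t\xi(u)\,du=\int_0^t dL(u)=L(t)$, so $M_\lambda\big(\int_0^t\xi(u)\,du\big)$ and $M_\lambda\circ L(t)$ are literally the same object and only the first identity in law carries content. Throughout I would work with the backward generator $\mathcal{A}$ of the directing process, which for the simple chain $\{M_\lambda(t)\}$ acts on bounded functions by $(\mathcal{A}f)(m)=\lambda(m)\,[f(m+1)-f(m)]$, and with its transition semigroup $P_s=e^{s\mathcal{A}}$ for $s\ge0$. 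The structural fact I would exploit is that $\mathcal{A}$ is linear in the rate function $\lambda$, so that multiplying $\lambda$ by the scalar noise $\xi(t)$ is the same as replacing the operator $\mathcal{A}$ by $\xi(t)\,\mathcal{A}$.

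First I would identify the law of the subordinate process by conditioning on the entire path of the subordinator. Because $\{M_\lambda(t)\}$ is time homogeneous and Markov and $\{L(t)\}$ is an independent increasing process, conditional on $\{L(s):s\ge 0\}$ the process $M_\lambda\circ L$ is simply the directing chain observed along the monotone operational time $L$; hence its conditional transition over $[t,t+\dt]$ given the increment $\Delta L(t)=\ell$ equals $P_\ell=e^{\ell\mathcal{A}}$. Using the stationary independent increments of $\{L(t)\}$ to remove the conditioning and to verify the Chapman--Kolmogorov property, I obtain that $M_\lambda\circ L$ is a time-homogeneous Markov process (this is the subordination theorem of Sato~\cite{sato1999}) whose transition semigroup is $\tilde P_{\dt}=E\!\left[e^{\,L(\dt)\mathcal{A}}\right]$.

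Next I would show that $\tilde P_{\dt}$ is precisely the solution of the L\'{e}vy-driven Kolmogorov backward differential system~\eqref{eqn:stoch-KBDS} that defines $\{M_{\lambda\xi}(t)\}$. Taking expectations over the noise reduces the stochastic system to an ordinary backward system with generator given, by Phillips' subordination formula, as $\tilde{\mathcal{A}}=a\,\mathcal{A}+\int_{(0,\infty)}(P_s-I)\,\nu(ds)$, where $a$ is the drift and $\nu$ the L\'{e}vy measure of $\{L(t)\}$; here the normalization $E[L(t)]=t$ is exactly what enforces the unbiasedness imposed on the multiplicative noise. Since $\tilde P_{\dt}=e^{\dt\tilde{\mathcal{A}}}$ solves this system with $\tilde P_0=I$, and since the theorem assumes that the solution of~\eqref{eqn:stoch-KBDS} exists and is unique, I would conclude that $\{M_{\lambda\xi}(t)\}$ and $\{M_\lambda\circ L(t)\}$ share the same generator, the same time-homogeneous Markov structure, and the same initial law, and are therefore identical in law.

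I expect the main obstacle to be reconciling the \emph{formal} white-noise increment with the exponential flow produced by subordination. Written naively, replacing $\mathcal{A}\,d\dt$ by $\mathcal{A}\,dL(\dt)$ and integrating a jump of $L$ in the It\^o--Stieltjes sense would contribute the factor $\big(I+\Delta L(\dt)\,\mathcal{A}\big)$ across that jump, whereas subordination demands the factor $e^{\Delta L(\dt)\,\mathcal{A}}$ obtained by running the directing chain for $\Delta L(\dt)$ units of operational time. The content of the appendix definition~\eqref{eqn:stoch-KBDS} is therefore precisely to interpret the multiplicative noise so that its jump part resolves along the flow of $\mathcal{A}$ (a Marcus/canonical reading rather than an It\^o one), and the only delicate step is verifying that this interpretation reproduces the Phillips generator above; once that identification is secured, the assumed existence and uniqueness of the backward system does the rest.
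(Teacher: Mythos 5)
Your overall strategy is sound and you have correctly isolated the mechanism that makes the theorem true: under the Marcus reading of~\eqref{eqn:stoch-KBDS}, a jump $\Delta L$ of the subordinator acts on the conditional transition probabilities through the full flow $e^{\Delta L\,\mathcal{A}}$ rather than the Euler factor $I+\Delta L\,\mathcal{A}$, which is exactly what aligns the noise-driven system with running the directing chain at operational time $L(t)$; and you rightly dispose of the second identity in law as pure notation. However, there is a genuine gap in the execution. Your plan takes expectations of the stochastic system \emph{first} and then matches the resulting generator against Phillips' subordination formula, but that averaging step is not legitimate as stated: the conditional probabilities $\Pi^{M(\lambda \xi)}_{n,n+k}$ are adapted to the driving noise, so the integrand in~\eqref{eqn:stoch-KBDS} and the increments $dL$ are dependent, and the expectation of the Marcus integral is not the integral against $E[dL]$. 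Producing the compensation terms $\int_{(0,\infty)}\bigl(e^{s\mathcal{A}}-I\bigr)\,\nu(ds)$ requires already knowing the pathwise structure of the solution. You flag this as ``the only delicate step,'' but it is in fact the entire content of the proof: the paper resolves it by applying the Marcus chain rule (Theorem 4.4.28 of \cite{applebaum2004}) to the deterministic transition function composed with the subordinator, $\pi^{M(\lambda)}_{n,n+k}\bigl(L(\dt)\bigr)$, using the smoothness supplied by~\eqref{eqn:simple-KBDS}, to show \emph{directly and pathwise} that this time-changed object satisfies~\eqref{eqn:stoch-KBDS}. No Phillips formula, no semigroup exponentials, and no appeal to Sato's subordination theorem beyond the definition of the subordinate transition probabilities are needed.

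A second, related misstep is where you invoke uniqueness. The hypothesis of Theorem~\ref{thm:KBDS} is existence and uniqueness of the solution to the \emph{stochastic} system~\eqref{eqn:stoch-KBDS}; your argument instead needs uniqueness of the averaged deterministic backward system with the Phillips generator, which is a different statement and is not what is assumed. The paper uses the hypothesis exactly where it is available: having exhibited $\pi^{M(\lambda)}_{n,n+k}\bigl(L(\dt)\bigr)$ as a solution of~\eqref{eqn:stoch-KBDS}, uniqueness of that stochastic system identifies it in law with the defining solution $\Pi^{M(\lambda \xi)}_{n,n+k}(\dt)$, and only then does taking expectations give $\pi^{M(\lambda)\circ L}_{n,n+k}(\dt) = \pi^{M(\lambda \xi)}_{n,n+k}(\dt)$, which determines the law of the Markov processes. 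If you repair your argument by performing the pathwise Marcus computation first---i.e., showing that conditionally on the path of $\{L(s)\}$ the solution of~\eqref{eqn:stoch-KBDS} is the time-changed flow---then your detour through the subordinated semigroup $E\bigl[e^{L(\dt)\mathcal{A}}\bigr]$ and Phillips' formula becomes unnecessary, and you land on essentially the paper's proof.
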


\subsection{Subordinate processes to simple MCPs by gamma subordinators}\label{sec:gamma-MCPs}

A convenient candidate for non-negative continuous-time noise is gamma noise.
In this case, the integrated noise process $L(t) = \Gamma(t)$ is a Gamma process defined to have independent, stationary increments with $\Gamma(t)-\Gamma(s)\sim \mbox{Gamma}\,([t-s]/\tau,\tau)$.
Here, $\mbox{Gamma}\,(\alpha,\beta)$ is the gamma distribution with mean $\alpha\beta$ and variance $\alpha\beta^2$.
In Subsections~\ref{sec:pois-gamma}--\ref{sec:negbin-gamma}, we study the inclusion of gamma noise in the rates of the Poisson process, linear birth process and linear death process, each of which have been shown to be equi-dispersed in Section~\ref{sec:notation}.

Following the convention for naming of subordinate 
processes \cite{sato1999}, we will place the name of the original process first, followed by the name of the driving subordinating noise.
We have chosen to study in detail the Poisson, linear birth and linear death processes because they are basic blocks widely used to build more complex, multi-process models, such as compartmental models used in population dynamics and queuing networks in engineering.
What makes these three processes fundamental is that they capture in the simplest way, i.e. linearly, the most common possibilities in real applications.
Namely, events that by occurring ``kill'' the potential for future events (death process, or negative feedback); events that ``reproduce'' meaning that their occurrence fuels that of future events (birth process, or positive feedback); and events which occur independently of the events which have already happened (Poisson, or immigration process, or no feedback).
However, our approach could be extended to other processes that might be of interest.

For these processes we provide three results: their first two moments about the mean, which show they are indeed over-dispersed; the distribution of the counting process, which allows for exact, direct simulation of the counting process; and a closed form for the infinitesimal probabilities, which fully characterize the processes and may be used for exact simulation of the event times of the point process and for indirect, exact simulation of the counting process by aggregation.

Since, as shown in Section~\ref{sec:notation-mult}, multivariate processes built upon univariate processes retain the dispersion constraints of the latter, constructing over-dispersed multivariate processes, a conceptually more complex task, can be achieved using the provided infinitesimal probabilities of over-dispersed univariate processes as building blocks, the same way it is routinely done with equi-dispersed processes. This highlights the relevance of the univariate results in this section.

\subsubsection{The Poisson gamma process}\label{sec:pois-gamma}
We construct an over-dispersed Poisson process.
This is a special case of the general compound Poisson process \citep{daley2003}, which can be constructed as independent jumps from an arbitrary distribution occurring at the times of a Poisson process.
Our alternative construction, derived through introducing white noise on the rate, has an advantage that it can be applied (as we show) not just to Poisson processes but to more general univariate and multivariate processes.

\begin{proposition}[Poisson gamma process]\label{pro:Pois-gamma-proc}
Let $\{M(t)\}$ be a MCP with $\q[n,1][][] = \alpha$ and $\q[n,k][][] = 0$, i.e. a time homogeneous Poisson process with rate $\alpha$.
Introducing continuous-time gamma noise $\{\xi(t)\} \equiv \{d\Gamma(t)/dt\}$ where $\Gamma(t) \sim \text{Gamma}\,(t/\tau,\tau)$ defines $\{N(t)\}$, a compound infinitesimally over-dispersed MCP with increment probabilities for $k \;\inNat$
\begin{eqnarray*}
P(\Delta N(t)=k|N(t)=n) &=& \frac{G\left(\tau^{-1}\dt + k\right)}{k!G(\tau^{-1} \dt)}p^{\tau^{-1}\dt}\left(1-p\right)^k,
\end{eqnarray*}
where $\omega = \tau \alpha$, $p = \left(1 + \omega \right)^{-1}$ and we use $G$ for the gamma function.
The infinitesimal probabilities are
\begin{eqnarray}
\nonumber \q[n,k][][] &=& \tau^{-1}\frac{\left(1-p\right)^k}{k}.
\end{eqnarray}
for $k \geq 1$ and zero otherwise.
The infinitesimal rate is
\begin{eqnarray}
\nonumber \lambda(n) &=& \tau^{-1}\log (p^{-1})
\end{eqnarray}
The infinitesimal moments are
$\mu_{dN} = \alpha$ and $\sigma^2_{dN} = (1+ \tau \alpha ) \mu_{dN}$ with dispersion $D_{dN}(n)=1 + \tau \alpha$.
\end{proposition}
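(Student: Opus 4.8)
The plan is to invoke Theorem~\ref{thm:KBDS}, which identifies the noise-driven construction with the subordinated process, $\{N(t)\} \sim \{M(\Gamma(t))\}$: a rate-$\alpha$ Poisson process evaluated along the gamma subordinator. Since both $\{M(t)\}$ and $\{\Gamma(t)\}$ have stationary, independent increments, the increment $\Delta N(t)$ depends only on $\Delta\Gamma(t) \sim \text{Gamma}(\tau^{-1}\dt,\tau)$, and conditionally on $\Delta\Gamma(t)=g$ it is Poisson with mean $\alpha g$. This reduces the whole statement to a Greenwood--Yule Poisson--gamma mixture, so in particular the dependence on $n$ drops out, consistent with the claimed homogeneity.

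First I would derive the increment probabilities by integrating the Poisson weight against the gamma density,
\[
P(\Delta N(t)=k|N(t)=n) = \int_{0}^{\infty} \frac{(\alpha g)^{k} e^{-\alpha g}}{k!}\,\frac{g^{\tau^{-1}\dt-1} e^{-g/\tau}}{G(\tau^{-1}\dt)\,\tau^{\tau^{-1}\dt}}\,dg,
\]
recognizing the result as a further gamma integral. Substituting $\alpha+\tau^{-1}=(1+\omega)/\tau$ and using $p=(1+\omega)^{-1}$, $1-p=\omega p$, a routine rearrangement collapses this to the stated negative-binomial form.

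Next I would pass to the infinitesimal limit, which is where I expect the only real difficulty. The delicate step is the asymptotics of the gamma-function ratio as the shape parameter $\tau^{-1}\dt \downarrow 0$: from $G(z)\sim z^{-1}$ as $z\downarrow0$ one gets $G(\tau^{-1}\dt)\sim\tau/\dt$ while $G(\tau^{-1}\dt+k)\to (k-1)!$, so $G(\tau^{-1}\dt+k)\big/\{k!\,G(\tau^{-1}\dt)\}\sim \dt/(k\tau)$. Because $p^{\tau^{-1}\dt}\to1$, dividing by $\dt$ and letting $\dt\downarrow0$ gives $q(n,k)=\tau^{-1}(1-p)^{k}/k$ for $k\ge1$. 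Summing over $k$ with $\sum_{k\ge1}x^{k}/k=-\log(1-x)$ at $x=1-p$ then yields $\lambda(n)=\tau^{-1}\log(p^{-1})$.

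Finally I would compute the infinitesimal moments directly by conditioning, which is cleaner than resumming the $q(n,k)$. The tower property gives $E[\Delta N(t)]=\alpha E[\Delta\Gamma(t)]=\alpha\dt$, hence $\mu_{dN}=\alpha$; and the law of total variance, using $E[\Delta N|\Delta\Gamma]=V[\Delta N|\Delta\Gamma]=\alpha\Delta\Gamma$, gives $V[\Delta N(t)]=\alpha E[\Delta\Gamma(t)]+\alpha^{2}V[\Delta\Gamma(t)]=\alpha\dt+\alpha^{2}\tau\dt$, since $\text{Gamma}(\tau^{-1}\dt,\tau)$ has mean $\dt$ and variance $\tau\dt$. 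Dividing by $\dt$ and letting $\dt\downarrow0$ yields $\sigma^{2}_{dN}=(1+\tau\alpha)\alpha=(1+\tau\alpha)\mu_{dN}$ and $D_{dN}(n)=1+\tau\alpha>1$, establishing over-dispersion; compoundness is immediate since $q(n,k)\neq0$ for $k\ge2$.
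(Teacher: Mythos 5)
Your proposal is correct and follows essentially the same route as the paper: condition on the gamma increment $\Delta\Gamma(t)\sim\text{Gamma}(\tau^{-1}\dt,\tau)$ to obtain the negative binomial increment probabilities, extract the infinitesimal probabilities from the small-shape behavior of the gamma-function ratio (your $G(z)\sim z^{-1}$ asymptotic is the same fact the paper derives via the product expansion of $G(\eta+k)/G(\eta)$ with $\eta=\tau^{-1}\dt$), and then read off the moments. The only cosmetic differences are that the paper cites the Poisson--gamma mixture as a standard result rather than performing the integral, and computes the moments from the negative binomial formulas $E[\Delta N(t)|N(t)=n]=\tau^{-1}\dt(1-p)/p$ and $V[\Delta N(t)|N(t)=n]=\tau^{-1}\dt(1-p)/p^{2}$ rather than by your tower-property and total-variance argument, both yielding $\mu_{dN}=\alpha$ and $\sigma^{2}_{dN}=(1+\tau\alpha)\alpha$.
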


\subsubsection{The binomial gamma process} \label{sec:binom-gamma}
Here, we consider multiplicative gamma noise on the rate of a linear death process.
This process has been proposed as a model for biological populations \citep{breto2009-aoas}, although it was defined as the limit of discrete-time stochastic processes rather than as the solution to the L\'{e}vy-driven Kolmogorov differential system of~\eqref{eqn:stoch-KBDS}.
It is standard to define death processes as decreasing processes, however our general framework has been for counting processes which are necessarily increasing.
To resolve this minor point, we will use the following notation.
For some positive integer $d_0$, define $\{\tilde A(t)\} \equiv \{\max \{d_0 - A(t), \,0 \}\}$ where $\{A(t)\}$ is a MCP.
The tilde represents then a transformation which, when applied to a MCP, defines a non-increasing Markov process which may be thought of as the number of individuals still alive by time $t$ out of the initial $d_0$ when the MCP $\{A(t)\}$ counts the number of deaths.

\begin{proposition}[binomial gamma process]\label{pro:binom-gamma-proc}
Let $\{M(t)\}$ be a MCP with $\q[m,1][][] = (d_0 - m) \Ind{m < d_0}$ and $\q[m,k][][] = 0$ for $k > 1$, i.e. the counting process associated with a linear death process $\{\tilde{M}(t)\}$ with individual death rate $\delta \in R^+$ and initial population size $d_0 \;\inNat$.
Introducing continuous-time gamma noise $\{\xi(t)\} \equiv \{d\Gamma(t)/dt\}$ where $\Gamma(t) \sim \text{Gamma}\,(t/\tau,\tau)$
defines $\{N(t)\}$, a compound infinitesimally over-dispersed MCP with increment probabilities
\begin{eqnarray*}
P(\Delta N=k| N(t)=n) &=&
    {\tilde{n} \choose k}
    \sum\limits_{j=0}^{k}{
    {k \choose j}
    }(-1)^{k-j}
    \Bigl(1  + \delta \tau(\tilde{n} - j)\Bigr)^{-\dt\tau^{-1}}
\end{eqnarray*}
and infinitesimal probabilities
\begin{eqnarray}
\nonumber \q[n,k][][] &=& {\tilde{n} \choose k} \sum\limits_{j=0}^{k}{{k \choose j}}
(-1)^{k - j + 1}\tau^{-1} \ln{\bigl(1  + \delta \tau (\tilde{n} - j)\bigr)}
\end{eqnarray}
for $n < d_0$ and $k \in \{0,\dots,\tilde n\}$, and zero otherwise.
Here, $\tilde n \equiv d_0 - n$.
The infinitesimal rate is
\begin{eqnarray}
\nonumber \lambda(n) &=& \tau^{-1} \ln{\bigl(1  + \delta \tau\tilde{n}\bigr)}
\end{eqnarray}
The infinitesimal moments and dispersion are
\bean
\mu_{dN} &=& \tilde{n}\tau^{-1}\ln(1+\delta\tau)\\
\sigma^2_{dN} &=& \mu_{dN} + \tilde{n}\tau^{-1}\Biggl[\bigl(\tilde{n} - 1\bigr)\ln\biggl(\frac{(1+\delta\tau)^2}{1+2\delta\tau}\biggr)\Biggr]\\
D_{dN}(n) &=& 1 + (\tilde{n} - 1)\Biggl[\frac{2\ln(1+\delta\tau)- \ln(1+2\delta\tau)}{\ln(1+\delta\tau)}\Biggr].
\eean
Hence, $\{N(t)\}$ is infinitesimally over-dispersed for $\tilde{n} > 1$ and equi-dispersed for $\tilde{n} = 1$.
\end{proposition}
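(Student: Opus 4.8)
The plan is to invoke Theorem~\ref{thm:KBDS}, which identifies the process $\{N(t)\}$ produced by unbiased multiplicative gamma white noise on the rate with the subordinated process $\{M(\Gamma(t))\}$, where $\{M(t)\}$ is the given death-counting process and $\Gamma(t)\sim\text{Gamma}(t/\tau,\tau)$. Working conditionally on $N(t)=n$, i.e. on $\tilde n = d_0-n$ individuals still alive, the increment $\Delta N(t)$ is the number of deaths that the directing process $\{M\}$ registers over the random subordinated time $\Delta\Gamma(t)=\Gamma(t+\dt)-\Gamma(t)\sim\text{Gamma}(\dt/\tau,\tau)$. The key structural observation is the well-known binomial increment law of the linear death process: conditional on $\Delta\Gamma(t)=s$, each of the $\tilde n$ survivors dies independently with probability $1-e^{-\delta s}$, so $\Delta N(t)$ is $\text{Binomial}(\tilde n,\,1-e^{-\delta s})$. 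This reduces the entire proposition to integrating a binomial mass function against a gamma density and then reading off small-$\dt$ asymptotics.

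First I would compute the increment probabilities. Using $(1-e^{-\delta s})^k e^{-\delta s(\tilde n-k)}=\sum_{j=0}^{k}\binom{k}{j}(-1)^{k-j}e^{-\delta s(\tilde n-j)}$, the increment probability becomes $\binom{\tilde n}{k}\sum_{j=0}^{k}\binom{k}{j}(-1)^{k-j}E\big[e^{-\delta(\tilde n-j)\Delta\Gamma(t)}\big]$. Each expectation is the Laplace transform of the $\text{Gamma}(\dt/\tau,\tau)$ law, namely $(1+\delta\tau(\tilde n-j))^{-\dt/\tau}$, which yields the stated formula for $P(\Delta N=k\mid N(t)=n)$.

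Next I would pass to the infinitesimal quantities through the expansion $(1+x)^{-\dt/\tau}=1-\dt\tau^{-1}\ln(1+x)+O(\dt^{2})$ as $\dt\downarrow 0$. For $k\ge 1$ the order-one contributions cancel because $\sum_{j=0}^{k}\binom{k}{j}(-1)^{k-j}=(1-1)^{k}=0$; dividing the surviving linear term by $\dt$ and letting $\dt\downarrow 0$ gives the claimed $\q[n,k][][]$. Taking $k=0$ gives $P(\Delta N=0\mid N(t)=n)=(1+\delta\tau\tilde n)^{-\dt/\tau}$, whence $\lambda(n)=\lim_{\dt\downarrow 0}\dt^{-1}\big(1-(1+\delta\tau\tilde n)^{-\dt/\tau}\big)=\tau^{-1}\ln(1+\delta\tau\tilde n)$. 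For the moments I would integrate the conditional binomial mean $\tilde n(1-e^{-\delta s})$ and second moment $\tilde n(1-e^{-\delta s})+\tilde n(\tilde n-1)(1-e^{-\delta s})^{2}$ against the gamma density, again via the Laplace transform evaluated at $\theta=\delta$ and $\theta=2\delta$, and then extract the $\dt$-linear coefficients. This produces $\mu_{dN}$ and $\sigma^{2}_{dN}$; the squared-mean term enters only at order $\dt^{2}$ and so drops out of the variance limit, exactly as in the proof of Theorem~\ref{thm:suf-cond-equidisp}.

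I expect the main obstacle to be the asymptotic bookkeeping rather than any single hard estimate: one must justify interchanging the $\dt\downarrow 0$ limit with the gamma integral (by dominated convergence, using that the bounded binomial factors have finite moments under properties P1--P2) and verify that the order-one terms cancel, so that the ratios $P(\cdot)/\dt$ genuinely converge. Once the moments are in hand, over-dispersion follows by inspection of $D_{dN}(n)=1+(\tilde n-1)\,[2\ln(1+\delta\tau)-\ln(1+2\delta\tau)]/\ln(1+\delta\tau)$: since $(1+\delta\tau)^{2}=1+2\delta\tau+\delta^{2}\tau^{2}>1+2\delta\tau$, the bracketed factor is strictly positive, so $D_{dN}(n)>1$ precisely when $\tilde n>1$, while the correction term vanishes when $\tilde n=1$, giving equi-dispersion there.
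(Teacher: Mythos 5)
Your proposal is correct and follows essentially the same route as the paper's proof: conditioning on the gamma increment to get a binomial law for $\Delta N(t)$, expanding $(1-e^{-\delta s})^k$ by the binomial theorem so that each term becomes a gamma Laplace transform $\bigl(1+\delta\tau(\tilde n-j)\bigr)^{-\dt/\tau}$, Taylor-expanding in $\dt$ with the cancellation $\sum_{j}\binom{k}{j}(-1)^{k-j}=0$, and obtaining the moments from the moment generating function evaluated at $\delta$ and $2\delta$ before reading off dispersion from $(1+\delta\tau)^2>1+2\delta\tau$. Your worry about interchanging the $\dt\downarrow 0$ limit with the gamma integral is moot in this argument, since (as in the paper) the expectations are computed in closed form first and only then expanded in $\dt$.
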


\subsubsection{The Negative Binomial Gamma Process}\label{sec:negbin-gamma}
Unlike for the death process, when introducing gamma noise to the birth process we are only able to show existence of moments imposing a restriction on the parameter space.
In particular, the birth rate of the original process imposes an upper bound on the over-dispersion.
When this restriction does not hold, the moments of the resulting process do not exist, and hence our dispersion index is not defined.
We include the derivations with gamma noise for consistency with the Poisson and death process.
Considering a common subordinator for all three processes has the advantage that it leads naturally to the multivariate situations in Section~\ref{sec:notation-mult}, in which over-dispersed univariate processes are combined to construct multivariate models.
It would be possible to use other subordinators, such as the inverse Gaussian process, for which the moment generating function is available in closed form.

\begin{proposition}[negative binomial gamma process]\label{pro:negbin-gamma-proc}
Let $\{M(t)\}$ be a MCP with $\q[m,1][][] = \beta m \Ind{m > 0}$ and $\q[m,k][][] = 0$ for $k > 1$, i.e. a linear birth process for $\beta \in \R^+$.
Introducing continuous-time gamma noise $\{\xi(t)\} \equiv \{d\Gamma(t)/dt\}$ where $\Gamma(t) \sim \text{Gamma}\,(t/\tau,\tau)$
defines $\{N(t)\}$, a compound infinitesimally over-dispersed MCP with increment probabilities for $k, n\;\inNat$
\begin{eqnarray*}
P(\Delta N=k| N(t)=n) &=&
    {n + k - 1 \choose k}
    \sum\limits_{j=0}^{k}{
    {k \choose j}
    }(-1)^{k-j}
    \Bigl(1  + \beta \tau(n + k - j)\Bigr)^{-\dt\tau^{-1}}
\end{eqnarray*}
and infinitesimal probabilities
\begin{eqnarray*}
\nonumber \q[n,k][][] &=& {n + k - 1 \choose k} \sum\limits_{j=0}^{k}{{k \choose j}}
(-1)^{k - j + 1}\tau^{-1} \ln{\bigl(1  + \beta \tau (n + k - j)\bigr)}
\end{eqnarray*}
and zero otherwise.
The infinitesimal rate is
\begin{eqnarray*}
\nonumber \lambda(n) &=& \tau^{-1} \ln{\bigl(1  + \beta \tau n\bigr)}
\end{eqnarray*}
For $2\beta \tau < 1$, the infinitesimal moments and dispersion are
\bean
\mu_{dN} &=& n\tau^{-1}\ln\Bigl(\frac{1}{1-\beta\tau}\Bigr)\\
\sigma^2_{dN} &=& \mu_{dN} + n\tau^{-1}\Bigl[\biggl(n - 1\bigr)\ln\biggl(\frac{(1-\beta\tau)^2}{1-2\beta\tau}\biggr)\Biggr]\\
D_{dN}(n) &=& 1 + (n - 1)\Biggl[\frac{2\ln(1-\beta\tau)- \ln(1-2\beta\tau)}{-\ln(1-\beta\tau)}\Biggr].
\eean
Hence, $\{N(t)\}$ is infinitesimally over-dispersed for $n > 1$ and equi-dispersed for $n = 1$.
\end{proposition}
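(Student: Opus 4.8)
The plan is to lean entirely on the subordination representation provided by Theorem~\ref{thm:KBDS}. Since $\{N(t)\}$ is produced by injecting unbiased multiplicative gamma white noise into the rate of the linear birth process $\{M(t)\}$, Theorem~\ref{thm:KBDS} identifies it in law with the subordinated process $\{M(\Gamma(t))\}$, where $\Gamma(t)\sim\text{Gamma}(t/\tau,\tau)$. Working conditionally on $N(t)=n$, I would observe that, given the independent gamma increment $\Delta\Gamma=\Gamma(t+\dt)-\Gamma(t)\sim\text{Gamma}(\dt/\tau,\tau)$, the increment $\Delta N$ has exactly the law of the Yule increment started from $n$ and run for intrinsic time $\Delta\Gamma$, namely the negative-binomial law $P(\Delta M=k\mid M=n,s)=\binom{n+k-1}{k}e^{-\beta s n}(1-e^{-\beta s})^{k}$. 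Everything then reduces to averaging this expression over $\Delta\Gamma$.

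For the increment probabilities I would expand $(1-e^{-\beta s})^{k}=\sum_{j=0}^{k}\binom{k}{j}(-1)^{k-j}e^{-\beta s(k-j)}$, so that the integrand becomes a finite sum of terms $e^{-\beta s(n+k-j)}$. Integrating each term against the density of $\Delta\Gamma$ gives the Laplace transform of a $\text{Gamma}(\dt/\tau,\tau)$ variable at $\beta(n+k-j)$, i.e. $(1+\beta\tau(n+k-j))^{-\dt/\tau}$, and reassembling reproduces the stated $P(\Delta N=k\mid N(t)=n)$; because the $j$-sum is finite, exchanging sum and integral is immediate. The infinitesimal probabilities and the rate then follow by differentiating at $\dt=0$: each factor obeys $\frac{d}{d\dt}(1+\beta\tau m)^{-\dt/\tau}\big|_{\dt=0}=-\tau^{-1}\ln(1+\beta\tau m)$, and for $k\ge1$ the probability vanishes at $\dt=0$ (since $\sum_{j}\binom{k}{j}(-1)^{k-j}=0$), so $\q[n,k][][]=\lim_{\dt\downarrow0}\dt^{-1}P(\Delta N=k\mid N(t)=n)$ is obtained by replacing each factor by its derivative. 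Taking $k=0$ yields $P(\Delta N=0)=(1+\beta\tau n)^{-\dt/\tau}$ and hence $\lambda(n)=\tau^{-1}\ln(1+\beta\tau n)$, and $\q[n,k][][]\neq0$ for some $k\ge2$ gives compoundness.

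For the infinitesimal moments I would avoid resumming the $\q[n,k][][]$ and instead use the tower property and the law of total variance with the Yule conditional moments $E[\Delta M\mid s]=n(e^{\beta s}-1)$ and $V[\Delta M\mid s]=ne^{\beta s}(e^{\beta s}-1)$ (the birth column of Table~\ref{table:moments}). Averaging over $\Delta\Gamma$ turns every $e^{c\beta s}$ into the gamma moment generating function $E[e^{c\beta\Delta\Gamma}]=(1-c\beta\tau)^{-\dt/\tau}$, so $E[\Delta N]$ and $V[\Delta N]$ become explicit finite combinations of $(1-\beta\tau)^{-\dt/\tau}$, $(1-2\beta\tau)^{-\dt/\tau}$ and $(1-\beta\tau)^{-2\dt/\tau}$. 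Dividing by $\dt$ and differentiating these at $\dt=0$ (again via $\frac{d}{d\dt}(1-c\beta\tau)^{-\dt/\tau}\big|_{0}=-\tau^{-1}\ln(1-c\beta\tau)$) produces $\mu_{dN}$ and $\sigma^{2}_{dN}$, and forming the ratio gives $D_{dN}(n)$; the dispersion classification is then read off from the sign of $\sigma^{2}_{dN}-\mu_{dN}$, which is governed by the positivity of the logarithmic factor $\tau^{-1}\ln\!\big((1-\beta\tau)^{2}/(1-2\beta\tau)\big)$.

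The main obstacle, and the reason this proposition carries the side condition $2\beta\tau<1$ absent in the Poisson and death cases, is moment existence. Because the Yule process grows exponentially, the conditional second moment involves $e^{2\beta s}$, and averaging this against the gamma increment requires $E[e^{2\beta\Delta\Gamma}]=(1-2\beta\tau)^{-\dt/\tau}<\infty$, that is $2\beta\tau<1$; outside this region the relevant moment generating function diverges, the infinitesimal variance is infinite, and $D_{dN}$ is undefined. I would therefore restrict the moment computation to $2\beta\tau<1$ exactly as stated, emphasizing that this explosion-driven restriction is the qualitative feature separating the birth case from the bounded-increment, always-integrable death case of Proposition~\ref{pro:binom-gamma-proc}.
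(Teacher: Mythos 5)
Your proposal is correct and follows essentially the same route as the paper: conditioning on the gamma increment to get negative-binomial increments, expanding $(1-e^{-\beta s})^k$ binomially and integrating term-by-term via gamma Laplace transforms, Taylor-expanding at $\dt=0$ for the infinitesimal probabilities, and computing moments by the law of total variance through the gamma moment generating function (your use of the Yule conditional moments from Table~\ref{table:moments} is algebraically identical to the paper's odds-parameterization $\Theta(t)=e^{\rho\Delta\SIP(t)}-1$). You also correctly isolate the role of the restriction $2\beta\tau<1$, which the paper imposes for exactly the same MGF-existence reason.
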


\subsubsection{The binomial beta process} \label{sec:binom-beta}

The infinitesimal moments of the binomial gamma process are a non-linear system of two equations which, to obtain a desired mean and variance, needs to be solved numerically for $\delta$ and $\tau$ with which the process is actually parameterized.
A moment-based parameterization allows to easily change the variability (via the variance) for a fixed location (fixing the mean), allowing for an easy interpretation of parameter changes.
Other parameterizations may require changes in several parameters to achieve the same goal.
In the context of counting processes, such parameterization has the additional advantage that it permits a direct and straightforward comparison with analogous stochastic differential equations.

As an alternative, the binomial beta process, as defined below, can be easily parameterized in terms of the infinitesimal moments.
Instead of introducing continuous-time noise to the rates, we consider introducing it directly to the event probabilities of the death binomial process.
Since the constraint on probabilities is the unit interval we need to consider an alternative to gamma noise.
An obvious alternative would be beta noise.
The construction of a beta process as a process with beta independent increments is not, however, straightforward \cite{hjort1990}.
We therefore consider in this section an alternative construction of compound processes based on noise introduction consisting on taking limits of discrete-time processes \cite{breto2009-aoas}.
For this construction, let $\{\Pi_0,\Pi_1, \Pi_2, \ldots\}$ be an infinite collection of independent and identically distributed random variables and, for each fixed $\dt > 0$, define the continuous-time process $\{\Pi_{\dt}(t)\}$ by $\Pi_{\dt}(t) \equiv \Pi_i$ for $t \in [i \dt, (i+1)\dt)$.
For each $h>0$, we can conditionally define a death process with time-varying rate $\{\Pi_{\dt}(t)\}$.
Integrating out over the distribution of $\{\Pi_{\dt}(t)\}$ results in an (unconditional) process for each $h>0$, and the resulting infinitesimal probabilities in the limit as  $\dt$ tends to zero define a Markov process.
As proved in Proposition~\ref{pro:binom-beta-proc} below, when $\Pi_i \sim \mbox{Beta}(\alpha, \beta)$, this construction defines the infinitesimal probabilities of a new process, which we call the beta binomial process. Here $\mbox{Beta}(\alpha, \beta)$ is a beta distribution with mean $\alpha / (\alpha + \beta)$ and variance $\alpha \beta / \Big((\alpha + \beta)^2 (\alpha + \beta + 1)\Big)$.

In spite of the more convenient parameterization, the integrated increment probabilities of the beta binomial process are not obtained as a byproduct as in Sections~\ref{sec:pois-gamma}--\ref{sec:negbin-gamma}.
Hence, exact simulation of the counts is only possible (with the present results) by aggregation from exact event time simulation based on the provided, defining infinitesimal probabilities.
As in Section~\ref{sec:binom-gamma}, let $\tilde{M}(t) = d_0 - M(t)$.
\begin{proposition}[binomial beta process]\label{pro:binom-beta-proc}
Let $\{M(t)\}$ be a MCP with $\q[m,1][][] = \delta (d_0 - m) \Ind{m < d_0}$ and $\q[m,k][][] = 0$ for $k > 1$, i.e. the counting process associated with a linear death process $\{\tilde{M}(t)\}$ with individual death rate $\delta \in R^+$ and initial population size $d_0 \;\inNat$.
Let $\c = \frac{\tilde{n} - 1}{\omega} - 1$ for $\tilde{n} > 1$ and $c \in \R^+$ otherwise with $0 < \omega < \tilde{n} - 1$.
Introducing continuous-time discretized beta noise $\{\Pi_{\dt}(t)\}$ in the death probabilities over an interval $[t, t + \dt]$, where $\Pi(t)_{\dt} \sim \mbox{Beta}\Big(\;\c(1-e^{-\delta \dt}),\; \c e^{-\delta \dt}\Big)$, and taking the limit $\dt\to 0$, defines $\{N(t)\}$,
a compound infinitesimally over-dispersed MCP with infinitesimal probabilities for $n < d_0$ and $k \in \{0,\dots,\tilde{n}\}$
\begin{eqnarray*}
\q[n,k][][] &=& {\tilde{n} \choose k} \frac{G(k)G(\c + \tilde{n} - k)} {G(\c + \tilde{n})}\c\delta
\end{eqnarray*}
and zero otherwise.
Here, $\tilde n \equiv d_0 - n$.
The infinitesimal moments are $\mu_{dN} = \tilde{n}\delta$ and, for $\tilde{n} > 1$, $\sigma^2_{dN} = (1+ \omega) \mu_{dN}$ with dispersion $D_{dN}(n)= 1 + \omega$ so that $\{N(t)\}$ is infinitesimally over-dispersed. If $\tilde{n} = 1$,  $\mu_{dN} = \sigma^2_{dN}= \tilde{n}\delta$ and $\{N(t)\}$ is equi-dispersed.
\end{proposition}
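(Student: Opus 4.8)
The plan is to exploit the conditional (hierarchical) structure of the construction, which is exactly that of a Beta--Binomial mixture. Fix $N(t)=n$ with $\tilde n = d_0 - n \ge 1$ and work over $[t,t+\dt]$ conditional on the realized noise $\Pi_\dt(t)=p$. Because the underlying mechanism is the linear death process, over this interval each of the $\tilde n$ individuals still alive dies independently with probability $p$, so $\Delta N\mid p \sim \mathrm{Binomial}(\tilde n,p)$. Writing $a_\dt = \c(1-e^{-\delta\dt})$ and $b_\dt = \c e^{-\delta\dt}$, so that $\Pi_\dt(t)\sim\mathrm{Beta}(a_\dt,b_\dt)$ with $a_\dt+b_\dt=\c$, I would integrate $p$ out to obtain the one-step (Beta--Binomial) integrated increment probability
\bean
P(\Delta N = k| N(t)=n) &=& {\tilde n \choose k}\,\frac{G(a_\dt+k)\,G(b_\dt+\tilde n-k)\,G(\c)}{G(\c+\tilde n)\,G(a_\dt)\,G(b_\dt)},
\eean
for $0\le k\le\tilde n$. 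This is the object whose $\dt\downarrow0$ limit drives everything.

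The heart of the proof is passing to the infinitesimal limit of this expression divided by $\dt$. As $\dt\downarrow0$ we have $a_\dt\downarrow0$ and $b_\dt\uparrow\c$, so every gamma factor except $G(a_\dt)$ converges to a finite nonzero limit: $G(a_\dt+k)\to G(k)$ (finite for $k\ge1$), $G(b_\dt+\tilde n-k)\to G(\c+\tilde n-k)$, and $G(b_\dt)\to G(\c)$, while $G(\c)$ and $G(\c+\tilde n)$ are constant in $\dt$. The only singular factor is $1/G(a_\dt)$, and the key asymptotic is $1/G(a_\dt) = a_\dt/G(a_\dt+1) = a_\dt\bigl(1+o(1)\bigr)$ together with $a_\dt = \c(1-e^{-\delta\dt}) = \c\delta\dt + o(\dt)$; hence $\dt^{-1}G(\c)/\bigl(G(a_\dt)G(b_\dt)\bigr)\to\c\delta$. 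Collecting the limits yields $\q[n,k][][] = {\tilde n\choose k}\,G(k)\,G(\c+\tilde n-k)\,\c\delta/G(\c+\tilde n)$ for $1\le k\le\tilde n$. I would then verify that these rates are finite and that $\lambda(n)=\sum_{k\ge1}\q[n,k][][]<\infty$ (a finite sum, since only $k\le\tilde n$ contribute), so that the limiting infinitesimal probabilities genuinely specify a stable, conservative MCP.

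For the infinitesimal moments I would avoid the gamma-function algebra and work directly from the Beta--Binomial via the laws of total expectation and total variance. The mean is immediate, $E[\Delta N]=\tilde n\,E[\Pi_\dt(t)]=\tilde n(1-e^{-\delta\dt})$, giving $\mu_{dN}=\tilde n\delta$. For the variance I would use $V[\Delta N]=\tilde n\,E[p(1-p)]+\tilde n^2\,V[p]$ with the Beta moments $E[p]=1-e^{-\delta\dt}$ and $V[p]=(1-e^{-\delta\dt})e^{-\delta\dt}/(\c+1)$, divide by $\dt$ and let $\dt\downarrow0$ (using $(1-e^{-\delta\dt})/\dt\to\delta$ and $e^{-\delta\dt}\to1$) to get $\sigma^2_{dN}=\tilde n\delta(\c+\tilde n)/(\c+1)$. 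Finally I would substitute the reparameterization $\c=(\tilde n-1)/\omega-1$, valid for $\tilde n>1$, which gives $\c+1=(\tilde n-1)/\omega$ and $\c+\tilde n=(\tilde n-1)(1+\omega)/\omega$, so $\sigma^2_{dN}=(1+\omega)\tilde n\delta=(1+\omega)\mu_{dN}$ and $D_{dN}(n)=1+\omega>1$, i.e.\ over-dispersion. The boundary case $\tilde n=1$ I would treat separately: then $\Delta N$ is $\mathrm{Bernoulli}(p)$, the variance formula collapses to $\sigma^2_{dN}=\delta=\mu_{dN}$, and the process is equi-dispersed, consistent with Corollary~\ref{cor:equi-death}.

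I expect the main obstacle to be the infinitesimal-probability limit: one must handle the $G(a_\dt)\to\infty$ singularity cleanly through $G(a_\dt)\sim1/a_\dt$ while keeping uniform control of the remaining factors, and confirm that the resulting $\dt\to0$ limit truly defines the generator of a Markov process rather than a merely formal expression. By contrast, once the conditional Beta--Binomial structure is in place the moment computations are routine, since they reduce to the first two moments of the Beta distribution.
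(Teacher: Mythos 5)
Your proposal is correct and follows essentially the same route as the paper's proof: conditioning to get the Beta--Binomial increment law, extracting the infinitesimal probabilities from the singular factor $1/G(a_\dt)\sim a_\dt = \c\delta\dt+o(\dt)$ (the paper packages the remaining gamma-factor control as Lemma~\ref{lemma:gamma-beta-n}, where you instead invoke continuity of the gamma function, which suffices for the limit), and computing the moments from Beta moments (your law-of-total-variance calculation is algebraically equivalent to the paper's use of the standard beta-binomial moment formulas, yielding the same $\sigma^2_{dN}=\tilde n\delta(\c+\tilde n)/(\c+1)$ before reparameterization). No gaps.
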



\section{Markov Counting Systems and Multivariate Dispersion}\label{sec:notation-mult}

Although univariate counting processes have applications in their own right, more complex scenarios will in general require multivariate processes.
We begin this section by defining a multivariate extension of MCPs and generalizing the dispersion indices of Section~\ref{sec:notation} to such multivariate processes, after introducing some necessary additional notation.
The study of integrated moments of multivariate processes has given rise to a substantial literature, often considering truncation of the moment generating function \citep{matis2000}.
Even though these integrated moments have been analyzed, we are not aware of any attempt to formally treat dispersion of a multivariate process.
In this section, we do this from both an integrated and infinitesimal perspective, focusing on the latter.
Then, we establish sufficiency and necessity of simpleness and compoundness for equi- and over-dispersion respectively for multivariate processes, generalizing the results of  Theorem~\ref{thm:suf-cond-equidisp} and Corollary~\ref{cor:nec-cond-overdisp}.
We illustrate this generalization by establishing equi-dispersion of multivariate birth-death processes.
This result will lead us in Section~\ref{sec:OMCS} to show that the univariate over-dispersed processes of Section~\ref{sec:OMCP}, together with other standard univariate processes, can be used as \emph{building blocks} to construct multivariate processes which retain the dispersion of the univariate blocks.
We will illustrate this point by constructing an over-dispersed multivariate birth-death process.
The integrated properties of the resulting new multivariate process will, however, generally be different from those of the univariate blocks.
This is further evidence for the conceptual simplicity arising from a focus on infinitesimal dispersion.
Henceforth, as earlier, we will mean infinitesimal dispersion when we simply talk about dispersion.
In this ``building block'' approach, it is straightforward to consider simultaneous events in any given block (e.g., blocks of the form of the over-dispersed processes of Section~\ref{sec:OMCP}). However, this univariate-flavored approach is not amenable to simultaneous events among different blocks.
In the context of the processes of Section~\ref{sec:OMCP}, such an approach would require considering dependencies among different white noises, which is outside the scope of this article.

In order to define a multivariate extension of MCPs, consider a finite family of counting processes $\big\{\{N_{ij}(t)\}:t \in \R^+, \; i \neq j, \;i \in \MultI, \; j \in \MultII\big\}$ with starting conditions $N_{ij}(0) = 0$.
We will refer to the multivariate, vector-valued process formed by such a family of processes as $\{\bm{N}(t)\}$, and we will use bold letters for multivariate processes.
The family member $\{N_{ij}(t)\}$ counts events of the \emph{$ij$-type}.
Each event type can be interpreted as a transition from one state (which we call the initial condition) to another (called the final condition).
The set $\MultI$ (and $\MultII$) consists of all possible initial (and final) conditions, and the set of all possible conditions we call $\MultSet \equiv \MultI \bigcup \MultII$.
As an illustration, consider the simple, linear birth process, where sets $\MultI$ and $\MultII$ could be defined as $\MultI = \{\textit{``unborn''}\}$ and $\MultII = \{\textit{``alive''}\}$ and $N_{\textit{``alive''}\textit{``unborn''}}(t) = 0$.
To ease notation in the subindices, we will assume in what follows that the elements of $\MultI$ and $\MultII$ are relabeled so that $\MultSet = \{1, \ldots ,\MultCar\}$ with $\MultCar$ being the cardinality of $\MultSet$.
Let a \emph{Markov counting system} (MCS), with associated multivariate counting process $\{\bm{N}(t)\}$, be the integer-valued Markov process $\{\bm{X}(t)\}  \equiv \big\{\big(X_1(t),\dots,X_{\MultCar}(t)\big)\big\}$ for $\bm{X}(0) \in \Z^{\MultCar}$
defined by ``conservation of mass'' identities
\begin{eqnarray}
\label{eqn:mass-conservation}
X_{\MultInd}(t) = X_{\MultInd}(0)+\sum_{i \neq \MultInd} N_{i\MultInd}(t)-\sum_{j \neq \MultInd} N_{\MultInd j}(t),
\end{eqnarray}
for $\MultInd \in 1,\ldots,\MultCar$ so that $\bm{X}(t) \in \Z^{\MultCar}$
The infinitesimal probabilities of the MCS $\{\bm{X}(t)\}$ are defined, via~\eqref{eqn:mass-conservation}, by the infinitesimal probabilities of $\{\bm{N}(t)\}$ which are in turn assumed to be a function of $\bm{X}(t)$.
Specifically, we define the non-zero infinitesimal probabilities to be
\begin{eqnarray}
\label{eqn:MCS} \q[\bm{x},k][ij][]  &\equiv& \lim\limits_{\dt \downarrow 0} \frac{P(\Delta N_{ij}=k \mbox{ and } \Delta N_{lm}=0 \mbox{ for } (l,m) \neq (i,j) |\bm{X}(t)=\bm{x})}{\dt}
\end{eqnarray}
for $i \neq j$, $k \inNatII$ and $\bm{x} \in \Z^{\MultCar}$.
Analogously to Section~\ref{sec:notation}, define the rate function of this MCS to be
\begin{eqnarray*}
\lambda(\bm{x}) \equiv \lim\limits_{\dt \downarrow 0} \frac{1 - P(\Delta N_{ij}=0\mbox{ for all $(i,j)$}|\bm{X}(t)=\bm{x})}{\dt}.\\
\end{eqnarray*}
Again we restrict ourselves to stable and conservative processes so that $\lambda(\bm{x}) = \sum_{ijk} \q[\bm{x},k][ij][] < \infty$ for all $\bm{x}$, with the sum being over $i,j \in \MultSet$, $i\neq j$, $k\ge 1$.
As motivated above, we have restricted the processes under consideration to those where simultaneous events are possible but only for a given process at any event time.
This is a natural choice since our ultimate goal is to construct multivariate over-dispersed processes using univariate ones as building blocks.

$\{\bm{X}(t)\}$ can be interpreted as a compartment model \cite{jacquez1996} with $\MultCar$ compartments and where $N_{ij}(t)$ counts the direct flow from compartment $i$ to $j$.
Equivalently, it can also be interpreted as a queuing network.
The conservation equations in~\eqref{eqn:mass-conservation} require that the sum of the $\MultCar$ processes $\{X_{\MultInd}(t)\}$ remains unchanged at any given moment in time.
However, they do not require that the $\{X_{\MultInd}(t)\}$ processes be bounded, as would occur in a standard compartment model with fixed total population size in which compartment counts are necessarily non-negative.
This is achieved by allowing at least some $\{X_{\MultInd}(t)\}$ to be negative.
As examples, we consider the possibility of birth, immigration and death events.
To model births or immigration or both into compartment $r$, we can require by construction that there is some $s$ for which  $X_s(t)=-N_{sr}(t)$.
Then,  $X_s(t)$ keeps track of the negative count of births or immigration which enter $r$ (and so lead to an increase in $X_r(t)$).
We call $X_s(t)$ a \emph{source} process.
One can similarly define a \emph{sink} process to count death events.

We slightly generalize the definition of dispersion indices of univariate counting processes as follows.
We will refer to
\begin{eqnarray}
D_{d\bm{X}}^{ij}(\bm{x}) & \equiv& \frac{\lim_{\dt \downarrow 0}\dt^{-1}V[N_{ij}(t+\dt)-N_{ij}(t)|\bm{X}(t)=\bm{x}]}{\lim_{\dt \downarrow 0} \dt^{-1}E[N_{ij}(t+\dt)-N_{ij}(t)|\bm{X}(t)=\bm{x}]}
\equiv \frac{\sigma^{2\,\,ij}_{d\bm{X}}(\bm{x})}{\mu_{d\bm{X}}^{ij}(\bm{x})}
\label{eqn:DispIndex-Mult}
\end{eqnarray}
as the infinitesimal dispersion index of $\{N_{ij}(t)\}$, the \emph{$ij$-marginal} counting process associated to the MCS $\{\bm{X}(t)\}$.
Since the infinitesimal moments in (\ref{eqn:DispIndex-Mult}) are a function of $\bm{x}$ and are directly related to the increments of $\{\bm{X}(t)\}$, we refer to the collection $\{D_{d\bm{X}}^{ij}(\bm{x}): i \in \MultI, \, j \in \MultII\}$ as the dispersion of the multivariate MCS $\{\bm{X}(t)\}$.
Following our approach of studying simultaneous events only on single transitions (equivalent here to noise processes $\{\xi_{ij}(t)\}$ which are independent for $i\neq j$), we do not need to consider infinitesimal covariance here.
We correspondingly define the integrated counterpart of~\eqref{eqn:DispIndex-Mult} as
\begin{eqnarray}
\nonumber D_{\bm{X}}^{ij}(\bm{x}_0,t) &\equiv& \frac{V[N_{ij}(t)-N_{ij}(0)|\bm{X}(0)=\bm{x}_0]}{E[N_{ij}(t)-N_{ij}(0)|\bm{X}(0)=\bm{x}_0]}.
\end{eqnarray}
We will now say that an $ij$-marginal counting process $\{N_{ij}(t)\}$ is (integrally) over-dispersed if $D^{ij}_{d\bm{X}} > 1 \, (D^{ij}_{\bm{X}} > 1)$ for $i,j \in \MultSet$.
We will also say that a MCS $\{\bm{X}(t)\}$ is (integrally) over-dispersed if $D^{ij}_{d\bm{X}} > 1 \, (D^{ij}_{\bm{X}} > 1)$ for at least some $i,j \in \MultSet$.
As in Section~\ref{sec:notation}, we define under- and equi-dispersion accordingly.

We now prove Theorem~\ref{thm:suf-cond-mult-build-block} below, which gives the infinitesimal mean and variance of the $ij$-marginal counting processes associated with a MCS.
Similarly to~\eqref{eqn:DispIndex} in the univariate section, these two moments are the denominator and numerator of~\eqref{eqn:DispIndex-Mult} respectively.
Note that these moments are now conditional on $\bm{X}(t)=\bm{x}$ because we have allowed in the definition of a MCS in~\eqref{eqn:MCS} for possible dependencies between different $ij$-marginal processes.
We use Theorem~\ref{thm:suf-cond-mult-build-block} to prove the corollaries which complete this section regarding both sufficient and necessary conditions for dispersion of MCSs.
We will also use Theorem~\ref{thm:suf-cond-mult-build-block} in Section~\ref{sec:OMCS} to prove that dispersion of MCSs constructed using univariate blocks retain the dispersion properties of the blocks.
The moment existence conditions we use concern now the total number of events that $\{\bm{N}(t)\}$, associated to $\{\bm{X}(t)\}$ through~\eqref{eqn:mass-conservation}, makes in an interval $[t, t + \bar h]$ and the size of simultaneous events in this same interval for each $ij$ marginal.
Analogously to Section~\ref{sec:notation}, define now
\[
\bar\Lambda_{ij}(t) \equiv \sup_{t\le s\le t+\bar h}\lambda\big(\bm{X}(s)\big).
\]
Note that we use the same bound for all $ij$-marginal processes.
Since we are allowing the possibility of compound processes, we also need to define a stochastic bound conditional on $\bm{X}(t)=\bm{x}$ for the size of simultaneous events
\[
\bar{Z}_{ij}(t) \equiv \sup\limits_{t \leq s \leq t+\dtt}{dN_{ij}(s)}.
\]
Again, we suppress the dependence of $\bar\Lambda_{ij}(t)$ and of $\bar Z_{ij}(t)$ on $\bm{x}$ and $\bar h$.
Now consider:
\begin{itemize}
\item[\bf P3.] For each $t$, $\bm{x}$ and $i \ne j$ there is some $\bar h>0$ such that $E[\bar Z_{ij}(t)\bar \Lambda_{ij}(t)]<\infty$.

\item[\bf P4.] For each $t$, $\bm{x}$ and $i \ne j$ there is some $\bar h>0$ such that $V[\bar Z_{ij}(t)\bar \Lambda_{ij}(t)]<\infty$.
\end{itemize}
Properties P3 and P4 again require that the $ij$-marginal counting processes do not have explosive behavior, and in particular they hold for simple birth-death processes with linear birth and death rates, as shown in Corollary~\ref{cor:equi-disp-birth-death}.

\begin{theorem}[infinitesimal moments of a MCS]
\label{thm:suf-cond-mult-build-block}
Let $\{\bm{X}(t)\}$ be a time homogeneous, stable and conservative Markov counting system with associated multivariate counting process $\{\bm{N}(t)\}$ defined by~\eqref{eqn:mass-conservation} and~\eqref{eqn:MCS}.
Supposing (P3), the infinitesimal mean of $\{N_{ij}(t)\}$ is the first moment of its jump distribution, i.e. $\mu_{d\bm{X}}^{ij}(\bm{x})=\sum_k k \q[\bm{x},k][ij][]$. Supposing (P4), its infinitesimal variance is the second moment of its jump distribution, i.e. $\sigma^{2\,\,ij}_{d\bm{X}}(\bm{x})=\sum_k k^2 \q[\bm{x},k][ij][]$.
\end{theorem}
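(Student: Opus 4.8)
The plan is to mirror the elementary stochastic-domination argument used for Theorem~\ref{thm:suf-cond-equidisp}, generalizing it in two directions at once: to compound jumps (so that events of size $k\ge2$ are retained rather than discarded) and to the $ij$-marginal of a multivariate system. Throughout I work conditionally on $\bm{X}(t)=\bm{x}$. The one genuinely new ingredient is the dominating object: over the window $[t,t+\dtt]$ the total event rate never exceeds $\bar\Lambda_{ij}(t)$ and every realized $ij$-jump has size at most $\bar Z_{ij}(t)$, so if $P(t)$ is a conditional Poisson process of rate $\bar\Lambda_{ij}(t)$ then the realized marginal increment obeys $\Delta N_{ij}(t)\le \bar Z_{ij}(t)\,\Delta P(t)$ (stochastically, given the two bounds), exactly as $\{\Delta N(t)\}$ was dominated by $\{\Delta P(t)\}$ in the simple univariate case. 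The product $\bar Z_{ij}(t)\bar\Lambda_{ij}(t)$ is therefore the natural dominating variable, which is why P3 and P4 are phrased in terms of it; when jumps are of unit size, $\bar Z_{ij}\equiv1$ and one recovers P1 and P2.

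For the infinitesimal mean under P3 I would split according to the total number $K$ of events in $[t,t+\dt]$, writing $E[\Delta N_{ij}(t)] = E[\Delta N_{ij}(t)\Ind{K=1}] + E[\Delta N_{ij}(t)\Ind{K\ge2}]$. The second term is $o(\dt)$: using $\Delta N_{ij}(t)\Ind{K\ge2}\le \bar Z_{ij}(t)\Delta P(t)\Ind{\Delta P(t)\ge2}$ together with the Poisson computation from Theorem~\ref{thm:suf-cond-equidisp}, it is at most $E\big[\bar Z_{ij}(t)\bar\Lambda_{ij}(t)\dt(1-\exp\{-\dt\bar\Lambda_{ij}(t)\})\big]$, which divided by $\dt$ tends to $0$ by dominated convergence with dominator $\bar Z_{ij}(t)\bar\Lambda_{ij}(t)$, integrable by P3. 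On $\{K=1\}$ the increment equals the size of the single event when that event is of $ij$-type, so the first term equals $\sum_{k\ge1}k\,P(K{=}1,\Delta N_{ij}{=}k)$, and since a second simultaneous event is of higher order one has $\dt^{-1}P(K{=}1,\Delta N_{ij}{=}k)\to\q[\bm{x},k][ij][]$ for each fixed $k$. To interchange this limit with the infinite sum over jump sizes I would split the series at a cutoff $k_0$: the finitely many head terms converge to $\sum_{k\le k_0}k\,\q[\bm{x},k][ij][]$, while the tail is bounded uniformly in $\dt$, because a realized $ij$-jump of size exceeding $k_0$ forces $\bar Z_{ij}(t)>k_0$, giving $\dt^{-1}\sum_{k>k_0}k\,P(K{=}1,\Delta N_{ij}{=}k)\le E\big[\bar Z_{ij}(t)\bar\Lambda_{ij}(t)\Ind{\bar Z_{ij}(t)>k_0}\big]$, which vanishes as $k_0\to\infty$ by P3. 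Letting $\dt\downarrow0$ and then $k_0\to\infty$ gives $\mu_{d\bm{X}}^{ij}(\bm{x})=\sum_k k\,\q[\bm{x},k][ij][]$, and in passing shows P3 forces this sum to be finite.

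The infinitesimal variance under P4 follows the same template with $\Delta N_{ij}(t)$ replaced by $(\Delta N_{ij}(t))^2$. The $\{K\ge2\}$ contribution is now $O(\dt^2)$ via $E\big[(\bar Z_{ij}(t)\Delta P(t))^2\Ind{\Delta P(t)\ge2}\big]\le 2\dt^2\,E\big[(\bar Z_{ij}(t)\bar\Lambda_{ij}(t))^2\big]$, finite by P4, while the $\{K=1\}$ term yields $\sum_k k^2\,\q[\bm{x},k][ij][]$ with its tail again controlled by the second-moment bound from P4. Since $E[\Delta N_{ij}(t)]=O(\dt)$ by the mean computation, $E[\Delta N_{ij}(t)]^2=o(\dt)$ by stability, so $V[\Delta N_{ij}(t)]=E[(\Delta N_{ij}(t))^2]-E[\Delta N_{ij}(t)]^2$ produces $\sigma^{2\,\,ij}_{d\bm{X}}(\bm{x})=\sum_k k^2\,\q[\bm{x},k][ij][]$, as required in~\eqref{eqn:DispIndex-Mult}.

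I expect the main obstacle to be precisely this interchange of the infinitesimal limit with the possibly infinite-support jump-size sum: compound processes need not have bounded jumps, so the series tail must be shown negligible \emph{uniformly} in $\dt$, and this is exactly the role of the product dominator $\bar Z_{ij}(t)\bar\Lambda_{ij}(t)$ in P3 and P4. The product form, as opposed to the univariate $\bar\Lambda$, is essential, since each of the up to $\bar\Lambda_{ij}(t)$-many events now contributes a size of up to $\bar Z_{ij}(t)$. A secondary technical point, handled as in Theorem~\ref{thm:suf-cond-equidisp}, is the legitimacy of dominating the marginal by a conditional Poisson process whose rate $\bar\Lambda_{ij}(t)$ is itself a supremum over the window and hence depends on the path.
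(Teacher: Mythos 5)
Your proposal is correct and takes essentially the same route as the paper's proof: the identical decomposition into the exactly-one-jump event and its complement, the identical dominating conditional compound Poisson process with rate $\bar\Lambda_{ij}(t)$ and degenerate jump size $\bar Z_{ij}(t)$ (the paper's $\bar N_{ij}$, whose increment is precisely your $\bar Z_{ij}(t)\,\Delta P(t)$), and the identical dominated-convergence use of P3 and P4 to show the multiple-jump term is $o(\dt)$. The only divergence is presentational, in the single-jump term: you justify the limit--sum interchange explicitly by truncating at $k_0$ and bounding the tail uniformly in $\dt$ by $E\bigl[\bar Z_{ij}(t)\bar\Lambda_{ij}(t)\Ind{\bar Z_{ij}(t)>k_0}\bigr]$, whereas the paper evaluates that term via the jump-chain factorization $E[\Delta N_{ij}\,|\,S_{ij}]\,P(S_{ij}\,|\,S)\,P(S)$ with $P(S)=\dt\sum_{i,j,k}q_{ij}(\bm{x},k)+o(\dt)$ supplied by Lemma~\ref{lem:prob-Scomp-little-oh}, leaving the interchange implicit, so if anything your version spells out a step the paper glosses over.
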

\begin{proof}
Let $\bar{N}_{ij}(t)$ be a conditional compound Poisson process with event rate $\bar{\Lambda}(t) \equiv \bar{\Lambda}_{ij}(t)$ and degenerate jump distribution with mass one at $\bar{Z}(t) \equiv \bar{Z}_{ij}(t)$. Work conditionally on $\bm{X}(t)=\bm{x}$ whenever the (potentially already conditional) expectation is taken over $\Delta N_{ij}(t)$ or functions of it.
Let $S$ be the event of exactly one single jump (of size one or more) in any of the $\{N_{ij}(t): i \ne j\}$ counting processes occurring in $[t,t+\dt]$, as in Lemma~\ref{lem:prob-Scomp-little-oh}.
Then,
\begin{eqnarray}
\label{eqn:suf-cond-multi-build-block-I}E[\Delta N_{ij}(t)] &=& E[\Delta N_{ij}(t)\Ind{S}] + E[\Delta N_{ij}(t)\Ind{S^c}].
\end{eqnarray}
Unlike in Theorem~\ref{thm:suf-cond-equidisp}, the term corresponding to one single jump is not immediate and requires Lemma~\ref{lem:prob-Scomp-little-oh}.
Letting $S_{ij}$ be the event of exactly one single jump (of size one or more) in the $ij$ counting process occurring in $[t,t+\dt]$,
\begin{eqnarray*}
E[\Delta N_{ij}(t)\Ind{S}] &=& E[\Delta N_{ij}|S_{ij}] P(S_{ij}|S) P(S)\\
                         &=& \sum\limits_{k} k \frac{\q[\bm{x},k][ij][]}{\sum\limits_k \q[\bm{x},k][ij][]}\frac{\sum\limits_k \q[\bm{x},k][ij][]}{\sum\limits_{i,j,k}\q[\bm{x},k][ij][]} \times \Bigl[\dt \sum\limits_{i,j,k}\q[\bm{x},k][ij][] + o(\dt)\Bigr]\\
                         &=& \dt \sum\limits_{k} k \q[\bm{x},k][ij][] + o(\dt)\\
\end{eqnarray*}
Analogously to Theorem~\ref{thm:suf-cond-equidisp}, we proceed to bound the second term to show the desired result. Since $\Delta N_{ij}(t)$ is stochastically smaller than $\Delta \bar{N}_{ij}(t)$,
\begin{eqnarray}
\nonumber E[\Delta N_{ij}(t) \Ind{S^c}] &\leq& E[\Delta \bar{N}_{ij}(t) \Ind{S^c}]\\
\nonumber &=& E[E[\Delta \bar{N}_{ij}(t) \Ind{S^c}|\bar{\Lambda}(t),\bar{Z}(t)]]%
\end{eqnarray}
Using~\eqref{eqn:suf-cond-multi-build-block-I} with $N_{ij}(t)$ replaced by $\bar{N}_{ij}(t)$ and since $E[\Delta \bar{N}_{ij}(t)|\bar{\Lambda}(t),\bar{Z}(t)]=\bar{Z}(t)\dt \bar{\Lambda}(t)$ and $E[\Delta \bar{N}_{ij}(t)\Ind{S}|\bar{\Lambda}(t),\bar{Z}(t)]=\bar{Z}(t) \dt \bar{\Lambda}(t) \exp\{-\dt \bar{\Lambda}(t)\}$, it follows that
\begin{eqnarray}
\nonumber E[\Delta N_{ij}(t) \Ind{S^c}] &\leq& E[\bar{Z}(t)\dt \bar{\Lambda}(t) - \bar{Z}(t)\dt \bar{\Lambda}(t) \exp\{-\dt \bar{\Lambda}(t)\}]\\
\nonumber &=& E[\bar{Z}(t) \dt \bar{\Lambda}(t)(1 - \exp\{-\dt \bar{\Lambda}(t)\})].%
\end{eqnarray}
As in Theorem~\ref{thm:suf-cond-equidisp}, it follows by dominated convergence, since $\bar{z}\bar{\lambda}(1 - \exp\{-\dt \bar{\lambda}\} \leq \bar{z}\bar{\lambda})$ and $E[\bar{Z}(t) \bar{\Lambda}(t)]$ is finite, that
\begin{eqnarray}
\nonumber \lim\limits_{\dt \downarrow 0} \frac{E[ \bar{Z}(t) \dt \bar{\Lambda}(t)(1 - \exp\{-\dt \bar{\Lambda}(t)\})]}{\dt} &=& E[\lim\limits_{\dt \downarrow 0} \bar{Z}(t) \bar{\Lambda}(t)(1 - \exp\{-\dt \bar{\Lambda}(t)\})] = 0.%
\end{eqnarray}
Therefore, $E[\Delta N_{ij}(t) \Ind{S^c}]=o(\dt)$ and the result for the mean follows.
Replacing first by second moments, the result for the variance follows since
\begin{eqnarray}
\nonumber E[(\Delta N_{ij}(t))^2 \Ind{S^c}] &\leq& E[(\Delta \bar{N}_{ij}(t))^2 \Ind{S^c}]\\
\nonumber &=& E[E[(\Delta \bar{N}_{ij}(t))^2 \Ind{S^c}|\bar{\Lambda}(t),\bar{Z}(t)]]\\
\nonumber &=& E[\bar{Z}^2(t) \dt \bar{\Lambda}(t) + \bar{Z}^2(t) \dt^2 \bar{\Lambda}^2(t) - \bar{Z}^2(t)\dt \bar{\Lambda}(t) \exp{-\dt \bar{\Lambda}(t)}]\\
\nonumber &\leq& E[2 \bar{Z}^2(t) \dt^2 \bar{\Lambda}^2(t)]=o(\dt),%
\end{eqnarray}
since $E[\bar{Z}^2(t) \bar{\Lambda}^2(t)]$ is assumed to be finite.
\end{proof}

Theorem~\ref{thm:suf-cond-mult-build-block} makes it straightforward to characterize dispersion of MCSs according to whether the $ij$-marginal processes associated to $\{\bm{X}(t)\}$ are simple or compound.
\begin{corollary}[sufficient and necessary conditions for Markov infinitesimal dispersion]\label{cor:suf-cond-overdisp}
Let $\{\bm{X}(t)\}$ be a time homogeneous, stable and conservative Markov counting system with associated multivariate counting process $\{\bm{N}(t)\}$.
Supposing (P3) and (P4), it is sufficient and necessary that an $ij$-marginal processes $\{N_{ij}(t)\}$ associated with $\{\bm{X}(t)\}$ be compound (simple) for it to be infinitesimally over-(equi-)dispersed.
\end{corollary}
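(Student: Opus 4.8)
The plan is to reduce the entire statement to an elementary algebraic comparison between the first and second moments of the jump distribution, letting Theorem~\ref{thm:suf-cond-mult-build-block} do all the analytic work. Under (P3) and (P4) that theorem already supplies
\[
\mu_{d\bm{X}}^{ij}(\bm{x}) = \sum_k k\,\q[\bm{x},k][ij][], \qquad \sigma^{2\,\,ij}_{d\bm{X}}(\bm{x}) = \sum_k k^2\,\q[\bm{x},k][ij][],
\]
both finite. Substituting these into the definition~\eqref{eqn:DispIndex-Mult} expresses $D_{d\bm{X}}^{ij}(\bm{x})$ purely in terms of the infinitesimal probabilities, so no further reasoning about limits or the underlying dynamics is required.

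First I would form the difference of the second and first moments,
\[
\sigma^{2\,\,ij}_{d\bm{X}}(\bm{x}) - \mu_{d\bm{X}}^{ij}(\bm{x}) = \sum_{k\ge 1} k(k-1)\,\q[\bm{x},k][ij][],
\]
and observe that every summand is nonnegative, since $\q[\bm{x},k][ij][]\ge 0$ and $k(k-1)\ge 0$, with the $k=1$ term vanishing identically. Hence the sign of $D_{d\bm{X}}^{ij}(\bm{x})-1$ is governed entirely by the coefficients $\q[\bm{x},k][ij][]$ with $k\ge 2$, that is, precisely by the presence of simultaneous $ij$-events.

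Both directions then follow at once. If $\{N_{ij}(t)\}$ is simple, then $\q[\bm{x},k][ij][]=0$ for all $k\ge 2$, so the displayed difference is zero and $D_{d\bm{X}}^{ij}(\bm{x})=1$; conversely, equi-dispersion forces $\sum_{k\ge 2} k(k-1)\,\q[\bm{x},k][ij][]=0$, and since each term is nonnegative this requires $\q[\bm{x},k][ij][]=0$ for every $k\ge 2$, i.e. simpleness. Symmetrically, if $\{N_{ij}(t)\}$ is compound, then $\q[\bm{x},j][ij][]>0$ for some $j\ge 2$, contributing a strictly positive term $j(j-1)\,\q[\bm{x},j][ij][]>0$, so the difference is strictly positive and $D_{d\bm{X}}^{ij}(\bm{x})>1$; conversely over-dispersion makes the difference positive, which is impossible when all $k\ge 2$ coefficients vanish, forcing compoundness. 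This settles sufficiency and necessity for equi- and over-dispersion simultaneously.

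The content here is genuinely light, because Theorem~\ref{thm:suf-cond-mult-build-block} has already absorbed the analytic difficulty (the dominated-convergence control of $E[\Delta N_{ij}(t)\Ind{S^c}]$ and its second-moment analogue). The one point requiring a little care is ensuring the moment sums converge so that $\sum_k k(k-1)\,\q[\bm{x},k][ij][]$ is a legitimate nonnegative number rather than an indeterminate $\infty-\infty$; this is exactly what (P3) and (P4) guarantee via the theorem, and it is the step I would flag as the main (modest) obstacle. I would also remark that the strict inequality in the compound case uses stability of $\{\bm{X}(t)\}$ only to guarantee that the denominator $\mu_{d\bm{X}}^{ij}(\bm{x})$ is finite and positive, so the ratio is well defined.
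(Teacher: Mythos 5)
Your proof is correct and follows essentially the same route as the paper's: both invoke Theorem~\ref{thm:suf-cond-mult-build-block} to reduce the dispersion index to $\sum_k k^2 q_{ij}(\bm{x},k)\big/\sum_k k\, q_{ij}(\bm{x},k)$ and then observe that $k^2>k$ exactly for $k\ge 2$, with necessity obtained from sufficiency by contraposition. Your rearrangement into the difference $\sum_{k\ge 2}k(k-1)\,q_{ij}(\bm{x},k)$ is a cosmetic variant of the paper's ratio argument, and your flagged caveats (finiteness via (P3)--(P4), positivity of the denominator in the non-degenerate case) are the same ones the paper implicitly assumes.
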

\begin{proof}
Let us first establish sufficiency.
Using Theorem~\ref{thm:suf-cond-mult-build-block}, for those $ij$-marginal processes $\{N_{ij}(t)\}$ which are simple,
\bean
D^{ij}_{d\bm{X}}(\bm{x}) &=& \frac{\sum_k k^2 \q[\bm{x},k][ij][]}{\sum_k k\q[\bm{x},k][ij][]} = \frac{\q[\bm{x},1][ij][]}{\q[\bm{x},1][ij][]} = 1.
\eean
For those $ij$-marginal processes $\{N_{ij}(t)\}$ which are compound,
\bean
D^{ij}_{d\bm{X}}(\bm{x}) &=& \frac{\sum_k k^2 \q[\bm{x},k][ij][]}{\sum_k k\q[\bm{x},k][ij][]} > 1
\eean
since $k^2 > k$ for $k \;\inNatII$.
Hence, an infinitesimally equi-dispersed process must be simple because compoundness suffices to establish over-dispersion.
Analogously, an infinitesimally over-dispersed process must be compound, establishing necessity.
\end{proof}
As an illustration of this result, consider the following MCS.
Define the sets $\MultI^{BD}=\{\textit{``unborn''},\textit{``alive''}\}$ and $\MultII^{BD} = \{\textit{``alive''},\textit{``dead''}\}$  so that $\MultSet^{BD} = \{\textit{``unborn'', }\textit{``alive'', }\textit{``dead''}\}$ which we relabel as $\MultSet^{BD} = \{1,2,3\}$. Then, let $\{\bm{X}^{BD}(t)\}$ with starting conditions $\big(X_1^{BD}(0),X_2^{BD}(0),X_3^{BD}(0)\big) \in \Z\times\N\times\N$ be the MCS with associate multivariate counting process $\{\bm{N}^{BD}(t)\} = \big\{ N_{12}^{BD}(t),\; N_{21}^{BD}(t),\; N_{13}^{BD}(t),\; N_{31}^{BD}(t),\; N_{23}^{BD}(t),\; N_{32}^{BD}(t)\big\}$ with $N_{ij}^{BD}(t)=0$ for $ij \in \big\{21,13,31,32\big\}$ and defined by infinitesimal probabilities for $k >0$
\bean
\q[\bm{x},k][12][BD] &=& \beta x_2 \Ind{x_2>0, k=1}\\
\q[\bm{x},k][23][BD] &=& \delta x_2 \Ind{x_2>0, k=1}.
\eean
$\{X_2(t)\}$ is commonly referred to as simple, linear birth-death process with death rate $\delta$ and birth rate $\beta$.
This MCS representation of the process departs from its usual univariate representation by extending the state to include $X_1(t) = -N_{12}(t)$, which keeps track of the (negative) number of total births, and $X_3(t) = N_{23}(t)$, which keeps track of the number of total deaths.

\begin{corollary}[infinitesimal equi-dispersion of birth-death processes]\label{cor:equi-disp-birth-death}
A simple, linear birth-death process with death rate $\delta \in R^+$ and birth rate $\beta \in R^+$ is infinitesimally equi-dispersed.
\end{corollary}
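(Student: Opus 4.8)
The plan is to realize the linear birth-death process as the MCS $\{\bm{X}^{BD}(t)\}$ constructed immediately above and then to invoke Corollary~\ref{cor:suf-cond-overdisp}. Its only nonzero marginals $\{N^{BD}_{12}(t)\}$ and $\{N^{BD}_{23}(t)\}$ are both simple, since $\q[\bm{x},k][12][BD]$ and $\q[\bm{x},k][23][BD]$ vanish for $k \ge 2$. Hence, once the moment hypotheses (P3) and (P4) have been checked, Corollary~\ref{cor:suf-cond-overdisp} gives $D^{12}_{d\bm{X}}(\bm{x}) = D^{23}_{d\bm{X}}(\bm{x}) = 1$ and equi-dispersion follows at once. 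The entire substance of the proof is therefore the verification of (P3) and (P4) for this process.

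First I would compute the rate function. Summing the infinitesimal probabilities gives $\lambda(\bm{x}) = (\beta + \delta)\,x_2\Ind{x_2 > 0}$, so that $\bar\Lambda_{ij}(t) = (\beta + \delta)\sup_{t \le s \le t+\dtt} X_2(s)$ for both $ij \in \{12, 23\}$. Since each marginal is simple, its increments $dN_{ij}(s)$ take values in $\{0,1\}$, whence $\bar Z_{ij}(t) \le 1$ and $\bar Z_{ij}(t)\bar\Lambda_{ij}(t) \le \bar\Lambda_{ij}(t)$. Consequently it suffices to show $E[\bar\Lambda_{ij}(t)] < \infty$ for (P3) and $E[\bar\Lambda_{ij}^2(t)] < \infty$ for (P4), and both reduce to bounding the first two moments of $\sup_{t \le s \le t+\dtt} X_2(s)$.

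The key step, and the main obstacle, is to control this supremum, since the linear birth rate $\beta x_2$ permits a priori explosive growth. I would dominate $\{X_2(s)\}$ by a pure linear birth (Yule) process $\{Y(s)\}$ with per-capita birth rate $\beta$ started from $Y(t) = X_2(t)$, using a monotone coupling: births of $X_2$ occur at rate $\beta X_2 \le \beta Y$ and so can be taken to be a subset of the births of $Y$, while deaths only decrease $X_2$, so that $X_2(s) \le Y(s)$ for all $s \in [t, t+\dtt]$. As $\{Y(s)\}$ is non-decreasing, $\sup_{t \le s \le t+\dtt} X_2(s) \le Y(t+\dtt)$, and $Y(t+\dtt)$ follows the negative binomial increment law of a Yule process, whose first two moments are finite for every finite $\dtt$. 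Therefore $E[\bar\Lambda_{ij}^2(t)] \le (\beta + \delta)^2 E[Y(t+\dtt)^2] < \infty$, which establishes both (P3) and (P4).

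With (P3) and (P4) verified, Corollary~\ref{cor:suf-cond-overdisp} applies directly: simpleness of the two surviving marginals yields $D^{12}_{d\bm{X}} = D^{23}_{d\bm{X}} = 1$, so $\{\bm{X}^{BD}(t)\}$, and in particular the linear birth-death process $\{X_2(t)\}$, is infinitesimally equi-dispersed. The pure-birth and pure-death statements of Corollaries~\ref{cor:equi-birth} and~\ref{cor:equi-death} then follow by the same argument upon setting $\delta = 0$ and $\beta = 0$ respectively, the single surviving marginal remaining simple in each case.
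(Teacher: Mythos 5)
Your proof is correct and follows essentially the same route as the paper: both reduce the claim to Corollary~\ref{cor:suf-cond-overdisp} via simpleness of the two nonzero marginals ($\bar Z_{ij}\le 1$), and both verify (P3)--(P4) by bounding $\lambda(\bm{x}) = (\beta+\delta)x_2\Ind{x_2>0}$ through stochastic domination of $\{X_2(s)\}$ by a pure linear birth process, whose monotonicity turns the supremum into $B(t+\dtt)$ with finite negative binomial moments. The only difference is cosmetic: you spell out the monotone coupling that the paper simply asserts as stochastic bounding.
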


\begin{proof} Use Corollary~\ref{cor:suf-cond-overdisp}.
Since the maximum jump of both non-zero $ij$-marginal processes $\{N_{12}^{BD}(t)\}$ and $\{N_{23}^{BD}(t)\}$ is one, we only need to check existence of $E\big[\bar{\Lambda}_{ij}^{BD}(t)| \bm{X}^{BD}(t) = \bm{x} \big]$ and of $V\big[\bar{\Lambda}_{ij}^{BD}(t)| \bm{X}^{BD}(t) = \bm{x} \big]$.
This is granted since
\[
\lambda^{BD}(\bm{X}^{BD}(t)) = (\beta + \delta) X_2^{BD}(t) \Ind{X_2^{BD}(t)>0}
\]
is stochastically bounded by $(\beta + \delta)B(t)$, where $\{B(t)\}$ is a simple, linear birth process with birth rate $\beta$.
Then
$$E\big[ \bar \Lambda_{ij}^{BD}(t) | \bm{X}^{BD}(t) = \bm{x} \big] \le E\big[\sup_{t \le s \le t + \bar\dt} (\beta + \delta)B(s)\; | B(t) = x_2 \big] = E\big[(\beta + \delta)B(t + \bar\dt)\; | B(t) = x_2 \big] < \infty.$$
By the same argument $V\big[\bar \Lambda_{ij}^{BD}(t) | \bm{X}^{BD}(t) = \bm{x}\big] < \infty$ and the result follows for the non-degenerate case of $x_2 >0$.
\end{proof}

\section{Over-dispersed Markov Counting Systems}\label{sec:OMCS}
After establishing equi-dispersion of simple multivariate processes in Corollary~\ref{cor:suf-cond-overdisp}, we proceed to construct over-dispersed multivariate processes.
As we have advanced, we will pursue this goal using a ``building block'' strategy.
Moving from a collection of univariate blocks to a multivariate MCS is not immediate.
In our implementation of the ``building block'' strategy, we proceed in two steps.
First, we illustrate how a univariate MCP may be equivalently written as a bivariate MCS by endowing it with sets $\MultI$ and $\MultII$ and adding an additional state.
We do this to facilitate notation in the rest of the paper.
Second, we show in Lemma~\ref{lem:disp-MCS-ass-block} that ``stacking'' the infinitesimal probabilities of each block (now written as the corresponding bivariate MCS) results in a MCS that retains dispersion of the blocks.


Consider writing the MCP $\{B(t)\}$, the simple, linear birth process of Corollary~\ref{cor:equi-birth}, with starting value $B(0) \in \N$, as bivariate MCS $\{\bm{X}^B(t)\}$.
This may be achieved by defining the sets $\MultI^B=\{\textit{``unborn''}\}$ and $\MultII^B = \{\textit{``alive''}\}$, so that $\MultSet^B = \{\textit{``unborn''},\textit{``alive''}\}$ which we relabel as $\MultSet^B = \{1,2\}$.
Then, let $\{\bm{X}^B(t)\}$ be the bivariate MCS with associated multivariate counting process $\{\bm{N}^B(t)\} = \Big\{ N_{12}^B(t),\; N_{21}^B(t)\Big\}$ and with $X_1^B(0) \in \Z$ and $X_2^B(0) \equiv B(0)$,
\bean
\q[\bm{x},k][12][B] &=& \beta x_2 \Ind{x_2 > 0}\\
\q[\bm{x},k][21][B] &=& 0.
\eean
Note that, by the definition of the infinitesimal probabilities, it follows that
$N^B_{21}(t) = 0$ and that $B(t)-B(0) \sim N_{12}^B(t)$.
Had $\{B(t)\}$ been the negative binomial gamma process of Section~\ref{sec:negbin-gamma}, the corresponding MCS could have been defined letting
\bean
\q[\bm{x},k][12][B] &=& {x_2 + k - 1 \choose k} \sum\limits_{j=0}^{k}{{k \choose j}}
(-1)^{k - j + 1}(\tau^B)^{-1} \ln{\bigl(1  + \beta \tau^B (x_2 + k - j)\bigr)}\\
\q[\bm{x},k][21][B] &=& 0.
\eean

Consider now writing the MCP $\{D(t)\}$, the counting process associated with a simple, linear death process $\{\tilde D(t)\}$ of Corollary~\ref{cor:equi-death}, with starting value $\tilde D(0) \in \N$, as the bivariate MCS $\{\bm{X}^D(t)\}$.
This may be achieved by defining now sets $\MultI^D=\{\textit{``alive''}\}$ and $\MultII^D = \{\textit{``dead''}\}$, so that $\MultSet^D = \{\textit{``alive''},\textit{``dead''}\}$ which we relabel as $\MultSet^D = \{1,2\}$.
Then, letting $\{\bm{X}^D(t)\}$, with $X_1^D(0) \equiv \tilde D(0)$ and $X_2^D(0) \in \N$, be the bivariate MCS with associated multivariate counting process $\{\bm{N}^D(t)\} = \Big\{ N_{12}^D(t),\; N_{21}^D(t)\Big\}$ defined by infinitesimal probabilities for $k >0$
\bean
\q[\bm{x},k][12][D] &\equiv& \delta x_1 \Ind{x_1 > 0}\\
\q[\bm{x},k][21][D] &\equiv& 0.
\eean
Again $N_{21}(t) = 0$ and $D(t) \sim N_{12}^D(t)$ and the following alternative infinitesimal probabilities would define the bivariate MCS corresponding to the binomial gamma process of Section~\ref{sec:binom-gamma}
\bean
\q[\bm{x},k][12][D] &\equiv& {x_1 \choose k} \sum\limits_{j=0}^{k}{{k \choose j}}(-1)^{k - j + 1}(\tau^D)^{-1} \ln{\bigl(1  + \delta \tau^D (x_1 - j)\bigr)}\\
\q[\bm{x},k][21][D] &\equiv& 0.
\eean

Now that we have a potential collection of blocks (bivariate MCSs corresponding to univariate MCPs), we move to the second step of our strategy and define in Definition~\ref{def:MCS-ass-blocks} a MCS associated to a collection of blocks by ``stacking'' the infinitesimal probabilities of these blocks and show in Lemma~\ref{lem:disp-MCS-ass-block} that the block dispersion remains unchanged.
Note that Definition~\ref{def:MCS-ass-blocks} complements Definition~\ref{eqn:MCS} in which MCSs were defined.
From Definition~\ref{eqn:MCS}, it is possible to decompose a MCS into a collection independent univariate MCPs of the $ij$-type by considering only sets of $k$ infinitesimal probabilities $\q[\bm{x},k][ij][]$ and their corresponding initial conditions.
On the other hand, Definition~\ref{def:MCS-ass-blocks} gives one possible construction of a MCS given a collection of independent univariate MCPs.
We illustrate these results by constructing an over-dispersed birth-death process in Proposition~\ref{pro:overdisp-birth-death}.
The general class of MCSs specified in Definition~\ref{def:MCS-ass-blocks} also includes, for example, the over-dispersed compartment model of \citep{breto2009-aoas}.

%

\begin{definition}[MCS associated to a collection of
blocks]\label{def:MCS-ass-blocks}
Let $\Big\{ \{\bm{X}^{\BlockInd}(t)\} : \BlockInd \in \BlockSet \Big\}$ be a collection of independent time homogeneous, conservative and stable bivariate MCSs defined on sets $\MultI^{\BlockInd} \equiv \{i^{\BlockInd}\}$ and $\MultII^{\BlockInd} \equiv \{j^{\BlockInd}\}$ by infinitesimal probabilities 
$\q[\bm{x},k][][\BlockInd]$.
We define the MCS associated with this collection of blocks to be the MCS $\{\bm{W}(t)\}$ defined on sets $\MultI \equiv \bigcup_{\BlockInd \in \BlockSet} \MultI^{\BlockInd}$ and $\MultII \equiv \bigcup_{\BlockInd \in \BlockSet} \MultII^{\BlockInd}$ via infinitesimal probabilities $\q[\bm{w},k][ij][] \equiv \q[(w_i,w_j),k][][B(i,j)]$ where $B: \MultI \times \MultII \rightarrow \BlockSet$
is defined so that $B(i,j)=\BlockInd$ implies $\MultI^{\BlockInd} = i$ and $\MultII^{\BlockInd} = j$.
\end{definition}

For Definition~\ref{def:MCS-ass-blocks} to uniquely specify a MCS, we need to impose two conditions.
First, since $B(i,j)$ is the element in the block set $\BlockSet$ corresponding to transitions from $i$ to $j$, we require that only one block in the collection counts events of the $ij$-type, so that $B(i,j)$ defines only one element in the block set $\BlockSet$,  i.e. so that $\MultI^{\BlockInd} = i$ and $\MultII^{\BlockInd} = j$ together imply $B(i,j)=\BlockInd$.
Then, the assignment of infinitesimal probabilities in Definition~\ref{def:MCS-ass-blocks} is unique.
Second, some care is needed with initial conditions of the blocks and how they correspond to $\bm{W}(0)$, the initial conditions of the MCS.
We simply assume that $\bm{W}(0)$ is defined separately from $\Big\{ \{\bm{X}^{\BlockInd}(0)\} : \BlockInd \in \BlockSet \Big\}$, to avoid concerning ourselves with possible inconsistencies between the inital conditions of the blocks once they are juxtaposed.

\begin{lemma}[infinitesimal dispersion of MCSs associated with a collection of blocks]\label{lem:disp-MCS-ass-block}
Let $\{\bm{W}(t)\}$ be the MCS associated with collection of blocks $\Big\{ \{\bm{X}^{\BlockInd}(t)\} : \BlockInd \in \BlockSet \Big\}$.
Then, supposing (P3) and (P4), the $ij$-marginal processes of $\{\bm{W}(t)\}$ have the same dispersion as the $ij$-marginal processes of $\Big\{ \{\bm{X}^{\BlockInd}(t)\}: \BlockInd \in \BlockSet \Big\}$.
\end{lemma}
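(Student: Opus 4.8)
The plan is to reduce the claim to the infinitesimal moment formulas already established in Theorem~\ref{thm:suf-cond-mult-build-block}, after which the result follows from the simple observation that Definition~\ref{def:MCS-ass-blocks} makes the relevant infinitesimal probabilities coincide. By~\eqref{eqn:DispIndex-Mult}, the dispersion $D^{ij}_{d\bm{W}}(\bm{w})$ of an $ij$-marginal is the ratio of its infinitesimal variance to its infinitesimal mean, and Theorem~\ref{thm:suf-cond-mult-build-block} expresses both of these quantities purely in terms of the jump distribution $\{\q[\bm{w},k][ij][]\}_{k \ge 1}$. It therefore suffices to check that this jump distribution agrees with that of the corresponding block.

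First I would fix a transition type $ij$ and let $\BlockInd = B(i,j)$ be the unique block counting events of that type, as guaranteed by the uniqueness condition imposed after Definition~\ref{def:MCS-ass-blocks}. Supposing (P3) and (P4), Theorem~\ref{thm:suf-cond-mult-build-block} applies to $\{\bm{W}(t)\}$ and yields
\begin{eqnarray*}
\mu_{d\bm{W}}^{ij}(\bm{w}) = \sum_{k} k\,\q[\bm{w},k][ij][], \qquad \sigma^{2\,\,ij}_{d\bm{W}}(\bm{w}) = \sum_{k} k^{2}\,\q[\bm{w},k][ij][].
\end{eqnarray*}
By the assignment in Definition~\ref{def:MCS-ass-blocks}, $\q[\bm{w},k][ij][] = \q[(w_i,w_j),k][][\BlockInd]$ for every $k$, so the two sums above are term-by-term identical to the corresponding jump moments of the (only non-degenerate) marginal of the block $\{\bm{X}^{\BlockInd}(t)\}$ evaluated at the state $(w_i,w_j)$. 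A second application of Theorem~\ref{thm:suf-cond-mult-build-block}, now to the block, identifies those jump moments with the block's own infinitesimal mean and variance. Taking the ratio, $D^{ij}_{d\bm{W}}(\bm{w})$ equals the dispersion index of the block's marginal at $(w_i,w_j)$; since $ij$ was arbitrary, the whole collection of dispersion indices coincides, which is the assertion of the lemma.

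The main obstacle is not this algebra but the bookkeeping needed to licence both invocations of Theorem~\ref{thm:suf-cond-mult-build-block}. The delicate point is that the stochastic rate bound $\bar\Lambda_{ij}(t)$ is built from the \emph{total} rate $\lambda\big(\bm{X}(s)\big)$, which for $\{\bm{W}(t)\}$ aggregates the contributions of all stacked blocks, whereas for an isolated block it reduces to that single block's rate. I would therefore argue that the relevant hypotheses hold on both sides: the max-jump bound $\bar Z_{ij}$ is unchanged because the $ij$-marginal jump distribution is, by construction, identical in $\{\bm{W}(t)\}$ and in the block, while each block's rate is a single non-negative summand of the total rate of $\{\bm{W}(t)\}$, so that (P3) and (P4) for $\{\bm{W}(t)\}$ constrain the product moments $E[\bar Z_{ij}\bar\Lambda_{ij}]$ and $V[\bar Z_{ij}\bar\Lambda_{ij}]$ entering the block as well. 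In the building-block cases of principal interest, such as the birth and death processes, the blocks satisfy (P3) and (P4) directly, as already verified in the course of Corollary~\ref{cor:equi-disp-birth-death}, so this step presents no real difficulty there; the cleanest general statement simply takes (P3) and (P4) to hold for each block as well as for $\{\bm{W}(t)\}$.
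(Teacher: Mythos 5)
Your proposal is correct and follows essentially the same route as the paper's proof: apply Theorem~\ref{thm:suf-cond-mult-build-block} to both $\{\bm{W}(t)\}$ and the block $\{\bm{X}^{B(i,j)}(t)\}$, observe that Definition~\ref{def:MCS-ass-blocks} makes the infinitesimal probabilities $\q[\bm{w},k][ij][]$ and $\q[(w_i,w_j),k][][B(i,j)]$ coincide term by term, and conclude that the ratios defining the dispersion indices are equal. Your closing remark about where (P3) and (P4) must hold is a point the paper's two-line proof leaves implicit, and your suggested resolution (assuming them for each block as well as for $\{\bm{W}(t)\}$) is the safe reading, since the supremum bound $\bar\Lambda_{ij}$ for an isolated block is taken along that block's own sample path and is not immediately dominated by the corresponding bound along the path of $\{\bm{W}(t)\}$.
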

\begin{proof}
Using Theorem~\ref{thm:suf-cond-mult-build-block} and letting $\bm{\tilde X}(t) \equiv \bm{X}^{B(i,j)}(t)$ to simplify subindices,
\bean
D_{d\bm{W}}^{ij}(\bm{w}) &=& \frac{\sum k^2 \q[\bm{w},k][ij][]}{\sum k \q[\bm{w},k][ij][]}\\
D_{d\bm{\tilde X}}^{ij}(w_i,w_j) &=& \frac{\sum k^2 \q[(w_i,w_j),k][ij][B(i,j)]}{\sum k \q[(w_i,w_j),k][ij][B(i,j)]},
\eean
and it follows, by Definition~\ref{def:MCS-ass-blocks}, that
\[
D_{d\bm{W}}^{ij}(\bm{w}) = D_{d\bm{\tilde X}}^{ij}(w_i,w_j)
\]
\end{proof}

In light of the equi-dispersion shown in Corollary~\ref{cor:equi-disp-birth-death}, consider applying Lemma~\ref{lem:disp-MCS-ass-block} to construct an over-dispersed birth-death process based on the over-dispersed processes of Section~\ref{sec:OMCP}.

\begin{proposition}[infinitesimally over-dispersed birth-death process]\label{pro:overdisp-birth-death}
Consider the collection of blocks $\Big\{\{\bm{X}^B(t)\}, \{\bm{X}^D(t)\}\Big\}$.
Here, $\{\bm{X}^B(t)\}$ is a bivariate MCS corresponding to the infinitesimally over-dispersed negative binomial gamma process of Section~\ref{sec:negbin-gamma} and $\{\bm{X}^D(t)\}$ is a bivariate MCS corresponding to the infinitesimally over-dispersed binomial gamma process of Section~\ref{sec:binom-gamma}.
Consider $\{\bm{W}(t)\}$, the MCS associated to this collection defined, as in Definition~\ref{def:MCS-ass-blocks}, on sets $\MultI \equiv \{\textit{``unborn''},\textit{``alive''}\}$ and $\MultII \equiv \{\textit{``alive''},\textit{``dead''}\}$ and $\MultSet = \{\textit{``unborn''},\textit{``alive''},\textit{``dead''}\}$ ,relabeled as $\MultSet = \{1,2,3\}$, with initial conditions $\bm{W}(0)$
, via infinitesimal probabilities for $k\; \inNatII$
\begin{eqnarray*}
\nonumber \q[\bm{w},k][12][] &=& {w_2 + k - 1 \choose k} \sum\limits_{j=0}^{k}{{k \choose j}}
(-1)^{k - j + 1}(\tau^B)^{-1} \ln{\bigl(1  + \beta \tau^B (w_2 + k - j)\bigr)}
\end{eqnarray*}
and, for $k \in \{0,\dots,w_1\}$ and $w_2 >0$,
\begin{eqnarray}
\nonumber \q[\bm{w},k][23][] &=& {w_2 \choose k} \sum\limits_{j=0}^{k}{{k \choose j}}
(-1)^{k - j + 1}(\tau^D)^{-1} \ln{\bigl(1  + \delta \tau^D (w_2 - j)\bigr)}
\end{eqnarray}
and zero otherwise.
The infinitesimal dispersion of $\{\bm{W}(t)\}$ is
\bean
D_{d\bm{W}}^{12}(\bm{w}) &=& 1 + (w_2 - 1)\Biggl[\frac{2\ln(1-\beta\tau^B)- \ln(1-2\beta\tau^B)}{-\ln(1-\beta\tau^B)}\Biggr]\\
D_{d\bm{W}}^{23}(\bm{w}) &=& 1 + (w_2 - 1)\Biggl[\frac{2\ln(1+\delta\tau^D)- \ln(1+2\delta\tau^D)}{\ln(1+\delta\tau^D)}\Biggr].
\eean
for $2\beta \tau^B < 1$.
Hence, the $ij$-marginals of $\{\bm{W}(t)\}$ are infinitesimally over-dispersed for $w_2 > 1$ and equi-dispersed for $w_2 = 1$.
\end{proposition}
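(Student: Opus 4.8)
The plan is to recognize $\{\bm{W}(t)\}$ as the MCS associated, in the sense of Definition~\ref{def:MCS-ass-blocks}, with the two-block collection $\Big\{\{\bm{X}^B(t)\},\{\bm{X}^D(t)\}\Big\}$, and then to let Lemma~\ref{lem:disp-MCS-ass-block} carry the dispersion of each block over to the matching marginal of $\{\bm{W}(t)\}$. First I would set $B(1,2)$ to be the birth block and $B(2,3)$ the death block; since no two blocks count events of the same type, the map $B$ and hence the assignment $\q[\bm{w},k][ij][] = \q[(w_i,w_j),k][][B(i,j)]$ is unambiguous. Evaluating this substitution recovers the two displayed infinitesimal probabilities directly from the block probabilities recorded in Section~\ref{sec:OMCS}: the negative binomial gamma probability depends on the alive count $w_2$, giving $\q[\bm{w},k][12][]$, while the binomial gamma probability depends on the number still alive, again $w_2$, giving $\q[\bm{w},k][23][]$.

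The substantive step is to verify the moment conditions (P3) and (P4) that Lemma~\ref{lem:disp-MCS-ass-block}, through Theorem~\ref{thm:suf-cond-mult-build-block}, presupposes. The combined rate
\[
\lambda(\bm{W}(s)) = (\tau^B)^{-1}\ln\bigl(1+\beta\tau^B W_2(s)\bigr) + (\tau^D)^{-1}\ln\bigl(1+\delta\tau^D W_2(s)\bigr)
\]
depends on the shared alive count $W_2(s)$, so I would bound it, as in the proof of Corollary~\ref{cor:equi-disp-birth-death}, by a monotone function of an over-dispersed pure-birth process that stochastically dominates $W_2(s)$ on $[t,t+\bar h]$ (deaths only decrease $W_2$). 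For the death marginal the jump size $\bar Z_{23}(t)$ is bounded by the alive count, whereas for the birth marginal I would control $\bar Z_{12}(t)\bar\Lambda_{12}(t)$ using finiteness of the first two jump moments of the negative binomial gamma process. This is exactly where $2\beta\tau^B<1$ enters: it is the parameter range in which the moments of the subordinated birth process exist (Proposition~\ref{pro:negbin-gamma-proc}), and outside it $\bar\Lambda_{12}$ lacks the finite mean and variance demanded by (P3) and (P4).

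Once (P3) and (P4) are established, Lemma~\ref{lem:disp-MCS-ass-block} gives $D_{d\bm{W}}^{12}(\bm{w}) = D_{d\bm{X}^B}^{12}(w_1,w_2)$ and $D_{d\bm{W}}^{23}(\bm{w}) = D_{d\bm{X}^D}^{23}(w_2,w_3)$. Inserting the dispersion of the negative binomial gamma process (Proposition~\ref{pro:negbin-gamma-proc} with $n=w_2$, $\tau=\tau^B$) and of the binomial gamma process (Proposition~\ref{pro:binom-gamma-proc} with $\tilde n=w_2$, $\tau=\tau^D$) produces the two displayed dispersion formulas. The over-/equi-dispersion dichotomy then follows from the shared factor $(w_2-1)$: combining the two logarithms shows that each bracketed term is a strictly positive quantity divided by a strictly positive quantity, so each index exceeds one when $w_2>1$ and equals one when $w_2=1$.

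The main obstacle I expect is the verification of (P3) and (P4), not the algebra: one must simultaneously control the shared growing population $W_2(s)$, which drives both transition rates and bounds the death jumps, together with the compound jumps of the over-dispersed birth block, and it is precisely this coupling that necessitates the restriction $2\beta\tau^B<1$.
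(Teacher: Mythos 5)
Your proposal follows essentially the same route as the paper's proof: apply Lemma~\ref{lem:disp-MCS-ass-block}, verify (P3) and (P4) by bounding the rate $\lambda(\bm{W}(t)) \le (\beta+\delta)W_2(t)$ and stochastically dominating $W_2$ by a negative binomial gamma birth process $\{B(t)\}$, bound the jump sizes $\bar Z_{12}$ by $B(t+\dtt)$ and $\bar Z_{23}$ by the alive count, and then transplant the univariate dispersions of Propositions~\ref{pro:negbin-gamma-proc} and~\ref{pro:binom-gamma-proc}.

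One quantitative slip: you claim that $\bar Z_{12}(t)\bar\Lambda_{12}(t)$ is controlled ``using finiteness of the first two jump moments'' of the negative binomial gamma process. Since $\bar Z_{ij}\bar\Lambda_{ij} \le (\beta+\delta)B^2(t+\dtt)$ up to the extra $w_2$ term, condition (P3) indeed only needs $E[B^2(t+\dtt)]<\infty$, but (P4) demands $V[\bar Z_{ij}\bar\Lambda_{ij}]<\infty$, hence $E[B^4(t+\dtt)]<\infty$ --- the paper's proof explicitly invokes ``finiteness of the fourth moment of the negative binomial gamma process'' at this point. First and second moments alone do not suffice for (P4), so you should strengthen that sentence accordingly; the rest of the argument, including the identification of the map $B(i,j)$, the role of $2\beta\tau^B<1$, and the $(w_2-1)$ dichotomy, matches the paper.
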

\begin{proof}
This proposition follows analogously to Corollary~\ref{cor:equi-disp-birth-death} using Lemma~\ref{lem:disp-MCS-ass-block}.
In this case, the rate function is
\bean
\lambda(\bm{W}(t)) &=& \sum\limits_k \q[\bm{W}(t),k][12][] + \sum\limits_k \q[\bm{W}(t),k][23][]\\
                   &=& (\tau^B)^{-1} \ln\big(1 + \beta \tau^B W_2(t)\big) + (\tau^D)^{-1} \ln\big(1 + \delta \tau^D W_2(t)\big)\\
                   & \leq & (\beta + \delta) W_2(t).
\eean
$W_2(t)$ is now stochastically bounded by $B(t)$, where $\{B(t)\}$ a negative binomial gamma process playing the same role as the simple, linear birth process in the proof of  Corollary~\ref{cor:equi-disp-birth-death}.
Hence, $\bar \Lambda_{ij}(t)$ is conditionally stochastically bounded by $(\beta + \delta)B(t + \dtt)$.
Another difference with Corollary~\ref{cor:equi-disp-birth-death} is that both $\{N_{12}(t)\}$ and $\{N_{23}(t)\}$ may jump by more than one unit, so we also need to stochastically bound $\bar Z_{ij}$, the maximum jump in $[t, t + \dtt]$ of the marginals.
Note that the maximum jump of the birth marginal $\{N_{12}(t)\}$ may be bound by $B(t + \dtt)$, i.e. any jump can be at most all of those born in that time interval.
The maximum jump of the death marginal $\{N_{23}(t)\}$ may be bound by $\{W_2(t) + B(t)\}$, i.e. any jump can be at most all those alive at time $t$ plus all those born in that time interval.
Then
\bean
E\big[\bar Z_{12}(t) \bar \Lambda_{12}(t) | \bm{W}(t) = \bm{w} \big] & \leq & E\big[(\beta + \delta)B^2(t + \dtt) | B(t) = w_2 \big] < \infty\\
E\big[\bar Z_{23}(t) \bar \Lambda_{23}(t) | \bm{W}(t) = \bm{w} \big] & \leq & E\big[(\beta + \delta)B(t + \dtt) w_2 + B^2(t + \dtt)] | B(t) = w_2 \big] < \infty
\eean
Hence, by finiteness of the fourth moment of the negative binomial gamma process, (P3) and (P4) hold and the result follows.
\end{proof}

Note that replacing $\q[\bm{w},k][12][]$ or $\q[\bm{w},k][23][]$ by the corresponding equi-dispersed infinitesimal probabilities in Proposition~\ref{pro:overdisp-birth-death}is possible and would yield infinitesimal over-dispersion in only one of the $ij$-marginal processes.

\section{Discussion}\label{sec:disc}

We have shown in Section~\ref{sec:notation} that simultaneous events are required in order to obtain infinitesimal over-dispersion in MCPs.
We now discuss heuristic interpretations and applications of this result.
There are two distinct motivations for modeling simultaneous events: the process in question may indeed have such occurrences, or the process may have clusters of event times that are short compared to the scale of primary interest.
In many applications, only aggregated counts and not event times may be available, in which case any clustering time scale which is shorter than the aggregation timescale may be appropriately modeled by simultaneous events.
Either way, the conclusion remains that if one wishes to use models based on Markov counting processes which match infinitesimal dispersion characteristics of a system then the possibility of simultaneous events in these models is unavoidable.

In the past, a modeling hypothesis that events occur non-simultaneously seems to have been favored.
As already proved, this comes at odds with the desire of a better fit to data by allowing Markov processes to generate additional variability.
To help reconcile intuition built on simple point process models with the models introduced in this paper, we give an interpretation of infinitesimally over-dispersed models based on the idea that the additional variability comes in the form of clusters of events which infinitesimally turn into simultaneous events as the time length of the cluster tends to zero.

Consider the bivariate process $(N(t),M(t))$ where $\{N(t)\}$ is a simple univariate MCP with conditional infinitesimal event probability $\q[n,1][][] = \lambda(n)M(t)$ and $M(t)$ is a discrete-time noise process which is constant over time intervals $\{[t_i,t_i+\omega]:t_0, \omega \in \R^+, t_i = t_{i-1}+\omega, i \;\inNatII\}$ and has $E[M(t)]=1$.
If the values of $M(t)$ on distinct intervals $[t_i,t_{i+1}]$ were independently gamma distributed with variance inversely proportional to $\omega$ and we let $\omega \downarrow 0$, we heuristically would obtain the gamma processes from Section~\ref{sec:OMCP}.

Consider a time interval $[t, t+ \dt]$ with $\dt >> \omega$. In this new model (before letting $\omega \downarrow 0$), the single events would form clusters over time since there would be more events in the $\omega$-intervals where the integrated event rate was higher than the mean $\lambda(n) \omega$ and fewer otherwise. More extreme (sudden) variations in the event rates would produce stronger clustering. This dependence induced by the clustering in turn increases the heterogeneity for a given mean infinitesimal rate $\lambda(n)$.
This interpretation is parallel to the common practice of modeling binomial over-dispersed experiments letting the parameter $p$ in a set of binomial experiments be stochastic, where the additional variability reflects the dependence between the $N$ individuals in each experiment.

Now let us consider more carefully the limit of this process.
Still considering time interval $[t, t + \dt]$, clusters start happening over shorter time intervals as $\omega \downarrow 0$ and there will be fewer and fewer events in each $\omega$-interval which will tend to obscure the clustering.
However, clusters may still be perceived if there are rare realizations of $M(t)$ which are extremely different from values in nearby $\omega$-intervals even as $\omega$ becomes small.
One possibility to make sure these extreme differences for which clustering will still be apparent is to ensure there is enough variability in $M(t)$ as $\omega$ decreases.
It turns out that by letting $M(t)$ be integrated white noise, one gets $V[M(t)] \propto \omega^{-1} \to \infty$ as $\omega \to 0$. This guarantees that $V[M(t)]$ is large enough for clustering to be present in $[t, t+\dt]$ as long as $\omega$ decreases as fast or faster than $\dt$, even in the limit as $\dt \downarrow 0$, i.e. infinitesimally. This heuristic leads to seeing simultaneous events as clusters of single events in intervals of length zero, i.e. simultaneous events can be seen as the limit of single-event clusters.

Based on this interpretation of simultaneous events, two questions might be addressed.
First, infinitesimally over-dispersed MCPs might be useful in applications where, even though exactly simultaneous events may be considered impossible, single events can be clustered very tightly compared to the distance between event clusters.
Then, infinitesimally over-dispersed MCPs may be a useful Markovian approximation to more complex, non-Markovian simple processes.
In this approximation these tight clusters are approximated by clusters with distance between cluster members equal to zero. One instance of applications that fall in this category is infectious diseases where it is hard to imagine ever having event times data and where it is plausible that a group of susceptible get infected with very short inter-event times compared to the time until another infectious individual infects another susceptible.

Second, if the goal is to model a process where it is natural to consider simultaneous events, there are other mechanisms that could be used, like a build-up of single events which, passed a threshold, becomes a batch event. Applications falling in this category include production systems and rental businesses \cite{ormeci2005}, physical processes and quantum optics \cite{gillespie2005} and internet traffic \cite{klemm2003}.
In this case, the infinitesimally over-dispersed MCPs presented here might serve as a null ``Markovian'' model against which to test other more intricate models for dependence.

\section*{Acknowledgements}

This work was supported by National Science Foundation grant DMS-0805533, the RAPIDD program of the Science \& Technology Directorate, Department of Homeland Security, the Fogarty International Center, National Institutes of Health and Spanish Government Projects SEJ2006-03919 and ECO2009-08100. This work was conducted at the Inference for Mechanistic Models Working Group supported by the National Center for Ecological Analysis and Synthesis, a Center funded by NSF (Grant \#DEB-0553768), the University of California, Santa Barbara, and the State of California.


\appendix


\section{Proofs for univariate Markov counting processes}\label{app:proof-overd-univ}

\begin{proof}[\textbf{Proof of Theorem~\ref{thm:suf-cond-mixed-equidisp}}(sufficient condition for mixed Markov infinitesimal equi-dispersion)]
Letting the process be a mixed process means that $\bar{\Lambda}(t)$ is now stochastic because of its dependence on the process $\{N(t)\}$, as in the non-mixing case, but also on the random variable $M$.
The result for the mean follows again by dominated convergence but the dominated functions are now
\begin{eqnarray*}
\int \lambda(1-\exp\{-\dt \lambda\}) f_{\bar{\Lambda}|M,N(t)=n}(\lambda, m) \, d\lambda \leq  \int \lambda f_{\bar{\Lambda}|M,N(t)=n}(\lambda, m) \, d\lambda,
\end{eqnarray*}
for all $m$, and the dominating function has a finite integral since $E[\bar{\Lambda}(t)]$ is assumed to be finite, i.e.
\begin{eqnarray*}
\int \int \lambda f_{\bar{\Lambda}|M,N(t)=n}(\lambda, m) \, d\lambda \,\,f_{M|N(t)=n}(m) \, dm = E[E[\bar{\Lambda}(t)|M]] < \infty.
\end{eqnarray*}
Then
\begin{eqnarray*}
\lim\limits_{\dt \downarrow 0} \frac{E[E[\dt \bar{\Lambda}(t)(1 - \exp\{-\dt \bar{\Lambda}(t)\})|M]]}{\dt} &=& E[\lim\limits_{\dt \downarrow 0} \bar{\Lambda}(t,M)(1 - \exp\{-\dt \bar{\Lambda}(t,M)\})] = 0.%
\end{eqnarray*}
The result for the variance follows again in the same lines as for the non-mixing case, i.e.
\begin{eqnarray*}
E[(\Delta N(t))^2 \Ind{\Delta N(t) > 1}] &=& E[E[(\Delta N(t))^2 \Ind{\Delta N(t) > 1}|M]]\\
&\leq& 2\dt^2 E[E[\bar{\Lambda}^2(t)|M]] = o(\dt),
\end{eqnarray*}
by the assumption that $E[\bar{\Lambda}^2(t)] < \infty$.
These two results show that the same terms that vanished in Theorem~\ref{thm:suf-cond-equidisp} vanish now as well.
Then,
\begin{eqnarray}
\nonumber E[\Delta N(t)] = E[\Ind{\Delta N(t)=1}] + o(\dt) &=& E[E[\Ind{\Delta N(t)=1}|M]] + o(\dt)\\
\label{eqn:cor-mixed-MCP} &=& E[\Lambda(n) \exp\{-\dt \Lambda(n+1)\} \phi(\dt)] + o(\dt)\\
\nonumber &=& E[\dt \Lambda(n) + o(\dt)] + o(\dt)
\end{eqnarray}
where~\eqref{eqn:cor-mixed-MCP} follows by Lemma~\ref{lem:prob-Scomp-little-oh}.
Here
\[
\phi(\dt) = \left\{
\begin{array}{l l}
  \dt & \quad \text{if $\Lambda(n)=\Lambda(n+1)$}\\
  {\displaystyle \frac{1 - \exp\{- \dt (\Lambda(n) - \Lambda(n+1))\}}{\Lambda(n) - \Lambda(n+1)}} & \quad \text{if $\Lambda(n) \neq \Lambda(n+1)$}\\
\end{array} \right.
\]
For $\Lambda(n) > \Lambda(n+1)$, $\phi(\dt) \leq h$ and for $\Lambda(n) < \Lambda(n+1)$, $\phi(\dt) = O(\dt)$.
For $\dt$ small enough, $\phi(\dt) \leq 1$ and the functions inside the expected value in~\eqref{eqn:cor-mixed-MCP}
are bounded by $\Lambda(n)$.
Then, taking limits gives the desired result via dominated convergence
\bean
\lim\limits_{\dt \downarrow 0}\frac{E[\Delta N(t)]}{h} &=& E[\Lambda(n)],
\eean
where $E[\Lambda(n)] \le E[\bar \Lambda(t)] < \infty$.
The same argument gives $\lim\limits_{\dt \downarrow 0}\dt^{-1}E[(\Delta N(t))^2] = E[\Lambda(n)]$ and the dispersion results of Theorem~\ref{thm:suf-cond-equidisp} follow.
\end{proof}

\begin{proof}[\textbf{Proof of Proposition~\ref{pro:Pois-gamma-proc}} (Poisson gamma process)]
We denote the Poisson rate $\rho$ in this proof and reserve $\alpha$ for the gamma shape parameter.
Since $\{N(t)\}$ is a conditional Poisson process,
\bea
\nonumber P(\Delta N(t) = k|N(t),\Delta\SIP(t)) &=& \frac{e^{-\rho \Delta\SIP(t)}(\rho \Delta\SIP(t))^k}{k!}.
\eea
It is a standard result that if $\rho \Delta \SIP(t)$ follows a gamma distribution with mean $\rho \dt$ and variance $\rho^2 \tau \dt$
the distribution of the increments of $\{N(t)\}$ is negative binomial with probability mass function
\begin{eqnarray}
\label{eqn:Pois-gamma-pmf}P(\Delta N(t)=k|N(t)=n) &=& \frac{G\left(\tau^{-1}\dt + k\right)}{k!G(\tau^{-1} \dt)}p^{\tau^{-1}\dt}\left(1-p\right)^k.
\end{eqnarray}
with $p=(1+\tau\rho)^{-1}$. The limiting probabilities follow by a Taylor series expansion about $\dt=0$:
\begin{eqnarray*}
\nonumber P(\Delta N(t)=0|N(t)=n) &=& p^{\tau^{-1} \dt} = 1 + \tau^{-1} \log{(p)} \dt + o(\dt)\\%
\nonumber P(\Delta N(t)=k|N(t)=n) &=& \frac{\Bigl(\tau^{-1} \dt + o(\dt)\Bigr)}{k}\times \Bigl(1 + \tau^{-1} \log{(p)} \dt + o(\dt)\Bigr)\left(1-p\right)^k\\
\nonumber &=& \frac{\tau^{-1} \left(1-p\right)^k}{k}\dt + o(\dt),
\end{eqnarray*}
for $k > 0$, using in~\eqref{eqn:Pois-gamma-pmf}
\begin{eqnarray*}
\frac{G(\eta + k)}{k!G(\eta)} &=& k!^{-1}(\eta + k-1) \times (\eta + k-2) \times \ldots \times (\eta + 2) \times (\eta + 1) \times (\eta)\\
&=& \sum\limits_{j=0}^{k-1}k!^{-1}\phi_j\dt^j \times \eta = k^{-1}\eta + \sum\limits_{j=1}^{k-1}k!^{-1}\phi_j\dt^{j+1}\tau^{-1}\\
&=& k^{-1}\eta + o(\dt),%
\end{eqnarray*}
with $\eta = \tau^{-1} \dt$. Recalling that $p=(1+\tau\alpha)^{-1}$, the moments follow by
\bean
E[\Delta N(t)|N(t)=n] &=& \frac{\tau^{-1}\dt(1-p)}{p}\\
&=& (1+ \tau \alpha)\tau^{-1}\dt - \tau^{-1}\dt = \alpha \dt\\
V[\Delta N(t)|N(t)=n] &=& \frac{\tau^{-1}\dt(1-p)}{p^2} = (1+ \tau \alpha)\alpha\dt
\eean
\end{proof}

\begin{proof}[\textbf{Proof of Proposition~\ref{pro:binom-gamma-proc}} (binomial gamma process)] Since $\{N(t)\}$ is the counting process associated with a conditional linear death process, the increment process is binomial with parameters size $\tilde{n}$ and event probability $\Pi(t) = 1-e^{-\delta \Delta \SIP(t)}$, i.e.
\begin{eqnarray*} P(\Delta N(t) = k|N(t)=n,\Delta \SIP(t)) &=& {\tilde{n} \choose k} \Pi(t)^{k}(1 - \Pi(t))^{\tilde{n}-k},
\end{eqnarray*}
for $k \in \{0,1,\dots,\tilde{n}\}$.
We integrate out the continuous-time gamma noise using the fact that
$\delta \Delta \SIP(t)$ follows a gamma distribution with mean $\delta \dt$ and variance $\delta^2 \tau \dt$
and completing the resulting incomplete gamma density making use of the multinomial theorem as follows
\begin{align*}
&P(\Delta N=k| N(t)=n) = \int\limits_{0}^{\infty}
    {
    {\tilde{n} \choose k}
    \bigl[1 - e^{-x}\bigr]^{k}
    \bigl[e^{-x}\bigr]^{\tilde{n} - k}
    \frac{x^{\alpha-1}e^{-x\beta}\beta^{\alpha}}{G(\alpha)}
    }dx\\
&\quad =
    {\tilde{n} \choose k}
    \int\limits_{0}^{\infty}{
    \Biggl[\sum\limits_{j=0}^{k}{
    {k \choose j}
    }(- e^{-x})^{k-j}\Biggr]e^{-x(\tilde{n} - k)}
    \frac{x^{\alpha-1}e^{-x\beta}\beta^{\alpha}}{\Gamma(\alpha)}
    }dx\\
&\quad =
    {\tilde{n} \choose k}
    \int\limits_{0}^{\infty}{\sum\limits_{j=0}^{k}{
    {k \choose j}
    }(-1)^{k-j}e^{-x(\tilde{n} - j)}
    \frac{x^{\alpha-1}e^{-x\beta}\beta^{\alpha}}{\Gamma(\alpha)}
    }dx\\
&\quad =
    {\tilde{n} \choose k}
   \sum\limits_{j=0}^{k}{
    {k \choose j}
    }(-1)^{k-j}\frac{\beta^{\alpha}}{(\beta  + \tilde{n} - j)^{\alpha}}
\times \int\limits_{0}^{\infty}{\frac{x^{\alpha-1}e^{-x(\beta + \tilde{n} - j)}(\beta  + \tilde{n} - j)^{\alpha}}{\Gamma(\alpha)}}dx\\
&\quad =
    {\tilde{n} \choose k}
    \sum\limits_{j=0}^{k}{
    {k \choose j}
    }(-1)^{k-j}
    \Bigl(1  + \delta \tau(\tilde{n} - j)\Bigr)^{-\dt\tau^{-1}}
\end{align*}
for $k \in \{0,\dots,\tilde{n}\}$ and recalling that $\alpha=\dt \tau^{-1}$ and $\beta=\delta^{-1}\tau^{-1}$.
The limiting probabilities follow by a Taylor series expansion about $\dt=0$:
\begin{eqnarray*}
P(\Delta N(t)=0|N(t)=n) &=&\bigl(1  + \delta \tau \tilde{n}\bigr)^{-\dt\tau^{-1}} = 1 - \tau^{-1}\ln{\bigl(1  + \delta \tau \tilde{n}\bigr)}\dt + o(\dt)\\%
P(\Delta N(t)=k|N(t)=n) &=&
    {\tilde{n} \choose k}
    \sum\limits_{j=0}^{k}{
    {k \choose j}
    }(-1)^{k - j}
    \Biggl(1 -\tau^{-1}\ln{\bigl(1  + \delta \tau (\tilde{n} - j)\bigr)}\dt + o(\dt)\Biggr)\\
&=&{\tilde{n} \choose k}
   \sum\limits_{j=0}^{k}{
    {k \choose j}
    }(-1)^{k - j + 1}\tau^{-1}\ln{\bigl(1  + \delta \tau (\tilde{n} - j)\bigr)}\dt + o(\dt),
\end{eqnarray*}
for $k \geq 1$, since by the binomial theorem $\sum\limits_{j=0}^{k}{{k \choose j}}(-1)^{k - j}=(1-1)^k=0$.

The moments are
\begin{eqnarray}
 \nonumber E[\Delta N(t)|N(t)=n]&=& \tilde{n}E[\Pi(t)|N(t)=n] = \tilde{n}E[1-e^{-\delta \Delta \SIP(t)}|N(t)=n]\\
 \nonumber V[\Delta N(t)|N(t)=n]&=& V[\tilde{n}\Pi(t)|N(t)=n] + E[\tilde{n}\Pi(t)(1-\Pi(t))|N(t)=n]\\
 \nonumber                      &=& E[\Delta N(t)|N(t)=n] + \tilde{n}\Bigl[\tilde{n}V[\Pi(t)|N(t)=n] - E[\Pi^2(t)|N(t)=n]\Bigr]%
\end{eqnarray}
Let $Y = \delta \Delta \SIP(t)$.
To obtain a closed-form solution for the binomial gamma process, where the probability of death is $\Pi(t)=1-e^{-\delta \Delta \SIP(t)}$, we need $E[e^{-Y}]$, $V[e^{-Y}]$ and $E[(1-e^{-Y})^2]$, which we can get using the moment generating function $E[e^{zY}] = (\frac{1}{1-z\delta \tau})^{\tau^{-1}\dt}$ for $z \delta \tau < 1$ and $\dt,\lambda,\tau> 0$.
This gives after a Taylor expansion around $\dt=0$
\begin{eqnarray*}
E[e^{-Y}] &=& (1+\delta \tau)^{-\dt/\tau}\\
          &=&  1 - \tau^{-1}\ln(1+\delta\tau)\dt + o(\dt)\\
V[e^{-Y}] &=& E[e^{-2Y}] - E[e^{-Y}]^2 = (1+2\delta \tau)^{-\dt/\tau} - (1+\delta\tau)^{-2\dt/\tau}\\
&=& (1 - \tau^{-1}\ln(1+2\delta\tau)\dt + o(\dt)) - (1 - \tau^{-1}\ln\bigl((1+\delta\tau)^2\bigr)\dt + o(\dt))\\
&=&\tau^{-1}\ln\biggl(\frac{(1+\delta\tau)^2}{1+2\delta\tau}\biggr)\dt + o(\dt))\\
E[(1-e^{-Y})^2] &=& 1 - 2(1 - \tau^{-1}\ln(1+\delta\tau)\dt + o(\dt)) + (1 - \tau^{-1}\ln(1+2\delta\tau)\dt + o(\dt))\\
&=&\tau^{-1}\ln\biggl(\frac{(1+\delta\tau)^2}{1+2\delta\tau}\biggr)\dt + o(\dt)).%
\end{eqnarray*}
Plugging these results in the moment expressions above,
\begin{eqnarray}
\nonumber E[\Delta N(t)|N(t)=n] &=& \tilde{n}\tau^{-1}\ln(1+\delta\tau)\dt + o(\dt)\\
\nonumber V[\Delta N(t)|N(t)=n] &=& \tilde{n}\tau^{-1}\ln{(1+\delta\tau)}\dt +\\
\nonumber                            && +\;\tilde{n}\tau^{-1}\Bigl[\bigl(\tilde{n} - 1\bigr)\ln\biggl(\frac{(1+\delta\tau)^2}{1+2\delta\tau}\biggr)\Bigr]\dt + o(\dt).%
\end{eqnarray}
Since $\frac{(1+\delta \tau)^2}{1+2\delta \tau} > 1$ for $\delta \tau > 0$, it follows
that the process is over-dispersed for $\tilde{n} > 1$ and
equi-dispersed for $\tilde{n} = 1$.
\end{proof}

\begin{proof}[\textbf{Proof of Proposition~\ref{pro:negbin-gamma-proc}} (negative binomial gamma process)] In order to parallel the proof for the binomial gamma process of Proposition~\ref{pro:binom-gamma-proc}, we let the individual birth rate be $\rho$ in this proof, while in the proposition it is represented by $\beta$.
This way we can still use $\beta$ for the gamma scale parameter.
Since $\{N(t)\}$ is a conditional linear birth process, the increment process is negative binomial with parameters number of successes $n$ and success probability $\Pi(t)=e^{-\rho\Delta\SIP(t)}$, i.e
\bean
P(\Delta N(t) = k|N(t)=n,\Delta \SIP(t)) &=& {n + k - 1 \choose k}\Pi(t)^{n}\left(1 - \Pi(t)\right)^k,
\eean
for $k \;\inNat$.
Following a derivation parallel to that of the binomial gamma processes of Proposition~\ref{pro:binom-gamma-proc},
\begin{align*}
&P(\Delta N=k| N(t)=n) = \int\limits_{0}^{\infty}
    {
    {n + k - 1 \choose k}
    \bigl[e^{-x}\bigr]^n
    \bigl[1 - e^{-x}\bigr]^{k}
    \frac{x^{\alpha-1}e^{-x\beta}\beta^{\alpha}}{G(\alpha)}
    }dx\\
&\quad =
    {n + k - 1 \choose k}
    \int\limits_{0}^{\infty}{
    \Biggl[\sum\limits_{j=0}^{k}{
    {k \choose j}
    }(- e^{-x})^{k-j}\Biggr]e^{-xn}
    \frac{x^{\alpha-1}e^{-x\beta}\beta^{\alpha}}{\Gamma(\alpha)}
    }dx\\
&\quad =
    {n + k - 1 \choose k}
    \sum\limits_{j=0}^{k}{
    {k \choose j}
    }(-1)^{k-j}
    \Bigl(1  + \rho \tau(n + k - j)\Bigr)^{-\dt\tau^{-1}}
\end{align*}
The limiting probabilities follow by a Taylor series expansion about $\dt=0$ like in the proof of Proposition~\ref{pro:binom-gamma-proc}.
The moments can be found as follows. Consider the odds against a birth $\Theta(t) = \frac{1-\Pi(t)}{\Pi(t)}$ given the probability of a birth $\Pi(t)$. Then
\begin{eqnarray}
 \nonumber E[\Delta N(t)|N(t)=n]&=& n E[\Theta(t)|N(t)=n] =n E[e^{\rho \Delta \SIP(t)} - 1|N(t)=n] \\
 \nonumber V[\Delta N(t)|N(t)=n]&=& V[n\Theta(t)|N(t)=n] + E[n\Theta(t)(1+\Theta(t))|N(t)=n]\\
 \nonumber                      &=& E[\Delta N(t)|N(t)=n] + n \Bigl[n V[\Theta(t)|N(t)=n] + E[\Theta^2(t)|N(t)=n]\Bigr]%
\end{eqnarray}
Let as for the binomial gamma process $Y = \rho \Delta \SIP(t)$, which follows a gamma distribution with mean $\rho \dt$ and variance $\rho^2 \tau \dt$.
To obtain a closed-form solution for the binomial gamma process, where the odds against a birth is $\Theta(t) = e^{\rho \Delta \SIP(t)} - 1$, we need $E[e^{Y}]$, $V[e^{Y}]$ and $E[(e^{Y}-1)^2]$, which we can get using the moment generating function $E[e^{zY}] = (\frac{1}{1-z\delta \tau})^{\tau^{-1}\dt}$ for $z \delta \tau < 1$ and $\dt,\lambda,\tau> 0$.
This gives after a Taylor expansion around $\dt=0$
\begin{eqnarray*}
E[e^{Y}] &=& (1-\rho \tau)^{-\dt/\tau}\\
          &=&  1 - \tau^{-1}\ln(1-\rho\tau)\dt + o(\dt)\\
V[e^{Y}] &=& E[e^{2Y}] - E[e^{Y}]^2 = (1-2\rho \tau)^{-\dt/\tau} - (1-\rho\tau)^{-2\dt/\tau}\\
&=& (1 - \tau^{-1}\ln(1-2\rho\tau)\dt + o(\dt)) - (1 - \tau^{-1}\ln\bigl((1-\rho\tau)^2\bigr)\dt + o(\dt))\\
&=&\tau^{-1}\ln\biggl(\frac{(1-\rho\tau)^2}{1-2\rho\tau}\biggr)\dt + o(\dt))\\
E[(e^{Y}-1)^2] &=& 1 - 2(1 - \tau^{-1}\ln(1-\rho\tau)\dt + o(\dt)) + (1 - \tau^{-1}\ln(1-2\rho\tau)\dt + o(\dt))\\
&=&\tau^{-1}\ln\biggl(\frac{(1-\rho\tau)^2}{1-2\rho\tau}\biggr)\dt + o(\dt)).%
\end{eqnarray*}
Note that we require that $2\tau\rho < 1$.Plugging this into the moment expressions gives
\begin{eqnarray}
\nonumber E[\Delta N(t)|N(t)n=n] &=& n\tau^{-1}\ln\Bigl(\frac{1}{1-\rho\tau}\Bigr)\dt + o(\dt)\\
\nonumber V[\Delta N(t)|N(t)=n] &=& n\tau^{-1}\ln\Bigl(\frac{1}{1-\rho\tau}\Bigr)\dt +\\
\nonumber                            && +\;n\tau^{-1}\Bigl[\bigl(n - 1\bigr)\ln\biggl(\frac{(1-\rho\tau)^2}{1-2\rho\tau}\biggr)\Bigr]\dt + o(\dt).%
\end{eqnarray}
Since $\frac{(1-\rho \tau)^2}{1-2\rho \tau} > 1$ for $\rho \tau > 0$ and $2\tau\rho < 1$, it follows that the process is also over-dispersed for $n > 1$ and
equi-dispersed for $n = 1$.
\end{proof}

\begin{proof}[\textbf{Proof of Proposition~\ref{pro:binom-beta-proc}} (binomial beta process)] Since $\{N(t)\}$ is the counting process associated with a conditional linear death process, the increment process is binomial with parameters size $\tilde{n}$ and death probability $\Pi(t)$, i.e.
\begin{eqnarray*}
\nonumber P(\Delta N(t){=}k|N(t){=}n,\Delta \Pi(t)) = {\tilde{n}  \choose k }(\Pi(t))^k(1 - \Pi(t))^{\tilde{n} - k},
\end{eqnarray*}
for $k \in \{0,1,\dots,\tilde{n}\}$.

We integrate out the beta noise using the fact that $\Delta N(t)$
conditional on $N(t)=n$ has a beta binomial distribution with the
corresponding parameters.

The beta binomial probability mass function of $\Delta N(t)$ given $N(t)=n$ is
\begin{align}
\nonumber &P(\Delta \CP(t) = k | \CP(t)=n) = \\
\label{eqn:transprob-betabinom1} &=\quad {\tilde{n} \choose k} \frac{\Gamma(\alpha + \beta) \Gamma(k + \alpha) \Gamma(\tilde{n} - k + \beta)}{\Gamma(\alpha) \Gamma(\beta) \Gamma(\alpha + \beta + \tilde{n})}\\
\label{eqn:transprob-betabinom2} &=\quad {\tilde{n} \choose k}\frac{ \Gamma(\alpha + \beta)\Gamma(\alpha)\Gamma(\beta)\Gamma(k)\alpha \{ \frac{\Gamma(\c + \tilde{n} - k)}{\Gamma(\c)}+ O(\dt)\}} {\Gamma(\alpha + \beta)\Gamma(\alpha) \Gamma(\beta) \frac{\Gamma(\c + \tilde{n})}{\Gamma(\c)}}\\
\label{eqn:transprob-betabinom3} &=\quad {\tilde{n} \choose k} \frac{\Gamma(k)\Gamma(\c + \tilde{n} - k)} {\Gamma(\c + \tilde{n})}\c\mu\dt + o(\dt),
\end{align}
for $k \in \{0,\ldots,\tilde{n}\}$.~\eqref{eqn:transprob-betabinom2} follows from~\eqref{eqn:transprob-betabinom1}  via an application of
Lemma~\ref{lemma:gamma-beta-n} in this appendix.
Specifically, using Lemma~\ref{lemma:gamma-beta-n}
with $i = \tilde{n}-k$, it follows that
\bea
\label{eqn:num}\Gamma(\tilde{n} - k + \beta) &=& \Big\{\frac{\Gamma(\c + \tilde{n}-k)}{\Gamma(\c)}+ O(\dt)\Big\}\Gamma(\beta),
\eea
and, since $\alpha + \beta = \c$,
\bea
\label{eqn:den} \Gamma(\alpha + \beta + \tilde{n}) &=& \Gamma(\c + \tilde{n})\\
\nonumber &=& \frac{\Gamma(\c + \tilde{n})}{\Gamma(\c)}\Gamma(\c)\\
\nonumber &=& \frac{\Gamma(\c + \tilde{n})}{\Gamma(\c)}\Gamma(\alpha + \beta).
\eea
Plugging~\eqref{eqn:num} and~\eqref{eqn:den} into~\eqref{eqn:transprob-betabinom1} gives~\eqref{eqn:transprob-betabinom2}. Then, using $\alpha = \c\mu\dt +
o(\dt)$ and canceling terms gives~\eqref{eqn:transprob-betabinom3},
which corresponds to the infinitesimal probabilities.

The moments of a beta binomial distribution are a standard result. Since $\alpha = c(1 - e^{-\delta \dt})$ and $\beta = ce^{-\delta \dt}$ and $\c = \frac{\tilde{n} - 1}{\omega} - 1$ for $\tilde{n} > 1$, Taylor expansions around $\dt=0$ then give
\begin{eqnarray}
\nonumber E[\Delta N(t)|N(t)=n] &=& \tilde{n}\frac{\alpha}{\alpha+\beta}\\
\nonumber                       &=& \tilde{n}\delta\dt + o(\dt)\\
\nonumber V[\Delta N(t)|N(t)=n] &=& \tilde{n}\frac{\alpha \beta}{(\alpha + \beta)^2} \frac{\tilde{n} + \alpha + \beta}{1 + \alpha + \beta}\\
\nonumber                       &=& \tilde{n}(1 - e^{-\delta \dt})e^{-\delta \dt}\frac{\tilde{n} + c}{c+1}\\
\nonumber                       &=& \tilde{n}\delta \dt\;(1+ \omega)+ o(\dt),
\end{eqnarray}
for $\tilde{n} > 1$ and it follows that the binomial beta process is over-dispersed for $\omega > 0$. If $\tilde{n} = 1$ the process is equi-dispersed as
\bean
 V[\Delta N(t)|N(t)=n] &=& \tilde{n}\frac{\alpha \beta}{(\alpha + \beta)^2}\\
                       &=& \tilde{n}\mu\dt + o(\dt)
\eean
\end{proof}

\begin{lemma}\label{lemma:gamma-beta-n}
For $\alpha=\c(1 - e^{-\mu\dt})$, $\beta=\c e^{-\mu\dt}$, $\c > 0$ and $i \in \{1,2,\dots\}$,
\bea
\nonumber \Gamma(\beta + i) &=& \Big\{ \frac{\Gamma(\c + i)}{\Gamma(\c)} + O(\dt)\Big\}\Gamma(\beta).
\eea
\end{lemma}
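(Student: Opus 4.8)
The plan is to reduce the statement to an elementary fact about a finite product, exploiting that $i$ is a positive integer. First I would apply the recursion $\Gamma(z+1)=z\Gamma(z)$ repeatedly to write the ratio of gamma functions as a Pochhammer-type product,
\[
\frac{\Gamma(\beta+i)}{\Gamma(\beta)} = \prod_{\ell=0}^{i-1}(\beta+\ell),
\qquad
\frac{\Gamma(\c+i)}{\Gamma(\c)} = \prod_{\ell=0}^{i-1}(\c+\ell).
\]
Since $\Gamma(\beta)>0$ for $\beta>0$, dividing the claimed identity through by $\Gamma(\beta)$ shows that the lemma is equivalent to establishing that these two finite products differ by $O(\dt)$.

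Next I would observe that the hypotheses force $\beta$ to be close to $\c$. Because $\beta=\c e^{-\mu\dt}$, a Taylor expansion of the exponential about $\dt=0$ gives $\beta = \c - \c\mu\dt + O(\dt^2)$, so that $\beta-\c = O(\dt)$. (Note that $\alpha+\beta=\c$ exactly, which is the identity already invoked in the derivation preceding the lemma.)

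The key step is then purely analytic: the map $P(x)=\prod_{\ell=0}^{i-1}(x+\ell)$ is a polynomial of fixed degree $i$, hence differentiable with derivative bounded on a neighbourhood of $x=\c$. A one-term Taylor expansion (equivalently, the mean value theorem) yields
\[
P(\beta) = P(\c) + O(\beta-\c) = \frac{\Gamma(\c+i)}{\Gamma(\c)} + O(\dt).
\]
Multiplying both sides by $\Gamma(\beta)$ and recalling $P(\beta)=\Gamma(\beta+i)/\Gamma(\beta)$ then delivers the stated identity.

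I do not anticipate a genuine obstacle here; the only point requiring care is bookkeeping on the $O(\dt)$ term. The bound should be read for fixed $i$ and fixed $\c>0$, with the implied constant depending on them through the size of the derivative of $P$ near $\c$; since $i$ is a fixed positive integer and $\c>0$, this dependence is harmless. It is worth emphasising that the restriction to integer $i$ is essential, as it is precisely what collapses the ratio $\Gamma(\beta+i)/\Gamma(\beta)$ into a polynomial in $\beta$ and thereby removes any need to control the gamma function itself near its poles.
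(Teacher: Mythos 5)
Your proof is correct and takes essentially the same approach as the paper: both reduce $\Gamma(\beta+i)/\Gamma(\beta)$ to the finite Pochhammer product $\prod_{\ell=0}^{i-1}(\beta+\ell)$ and use the fact that $\beta$ differs from $c$ by $O(h)$ (the paper phrases this as $\beta = c - \alpha$ with $\alpha = c(1-e^{-\mu h}) = O(h)$) to conclude the product equals $\Gamma(c+i)/\Gamma(c) + O(h)$. Your appeal to the mean value theorem for the fixed-degree polynomial is just a cleaner bookkeeping of the paper's direct expansion of the product and collection of $O(h)$ terms.
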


\begin{proof}
Since $\beta = \c - \alpha$, and by the definition of the gamma
function, for $i \geq 1$,
\bea
\nonumber \Gamma(\beta + i) &=& (\c -  \alpha + (i - 1))\times(\c - \alpha + (i - 2))\times \dots\times(c - \alpha)\times\Gamma(\beta)\\
\nonumber                   &=& \big\{(\c + (i - 1))\times(\c + (i - 2))\times\dots\times (\c) + O(\dt)\big\} \, \Gamma(\beta)\\
\nonumber                   &=& \Big\{ \prod\limits_{j=0}^{i-1}{(\c + j)} + O(\dt)\Big\}\, \Gamma(\beta)\\
\nonumber                   &=& \Big\{ \frac{\Gamma(\c + i)}{\Gamma(\c)} + O(\dt)\Big\}\, \Gamma(\beta).
\eea
\end{proof}


\section{
A lemma required for Theorem~\ref{thm:suf-cond-mult-build-block}}
\label{app:proof-overd-multi}
This technical result is similar to, but slightly different from, standard results on Markov chains.
\begin{lemma}[probability of single jump of any size in multivariate compound processes]\label{lem:prob-Scomp-little-oh}
Let $\{\bm{X}(t)\}$ be a time homogeneous, stable and conservative Markov counting system with associated multivariate counting process $\{\bm{N}(t)\}$ defined by~\eqref{eqn:MCS} and~\eqref{eqn:mass-conservation}, as in Theorem~\ref{thm:suf-cond-mult-build-block}.
Consider a starting time $t$ and let $\Tf$ be the time between $t$ and the first jump time and $\Ts$ be the time between $t + \Tf$ and the second jump time, where jumps can be of size one or more.
Let $S$ be the event of exactly one single jump in any of the $\{N_{ij}(t): i \ne j\}$ counting processes occurring in $[t,t+\dt]$.
Then, letting $\lambda_{\Tf} \equiv \lambda(\bm{x})$ be the rate function of $\{\bm{X}(t)\}$ during $[t,t+\Tf]$ and , and $\Lambda_{\Ts} \equiv \lambda\big(\bm{X}(t + \Tf)\big)$ be this conditional rate function during $[t + \Tf, t + \Ts]$,
\bean
P(S|\bm{X}(t)=\bm{x}, \Lambda_{\Ts}) &=& \lambda_{\Tf} \mbox{e}^{-\dt\Lambda_{\Ts}}\phi(\dt)
\eean
where
\[
\phi(\dt) \equiv \left\{
\begin{array}{l l}
  \dt & \quad \text{if $\lambda_{\Tf}=\Lambda_{\Ts}$}\\
  \frac{1 - \exp\{- \dt (\lambda_{\Tf} - \Lambda_{\Ts})\}}{\lambda_{\Tf} - \Lambda_{\Ts}} & \quad \text{if $\lambda_{\Tf} \neq \Lambda_{\Ts}$}\\
\end{array} \right.
\]
and
\bean
P\big(S|\bm{X}(t)=\bm{x}\big) &=& \dt \sum\limits_{i,j,k}\q[\bm{x},k][ij][] + o(\dt)
\eean
\end{lemma}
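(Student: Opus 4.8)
The plan is to read the event $S$ through the first two waiting times of the process and reduce everything to an elementary exponential integral. Working conditionally on $\bm{X}(t)=\bm{x}$, the stability and conservativeness assumptions make the holding time $\Tf$ to the first jump exponential with rate $\lambda_{\Tf}=\lambda(\bm{x})$; by the strong Markov property, and because holding times and jump destinations are independent in a stable, conservative Markov chain, conditioning on the state reached at $t+\Tf$ (equivalently on $\Lambda_{\Ts}$) leaves $\Tf$ exponential with rate $\lambda_{\Tf}$ and makes $\Ts$ an independent exponential with rate $\Lambda_{\Ts}$. The event $S$, that exactly one jump (of any size) occurs in $[t,t+\dt]$, is then $\{\Tf\le\dt\}\cap\{\Tf+\Ts>\dt\}$.

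First I would establish the conditional identity by integrating out $\Tf$. Using $P(\Ts>\dt-\tf\mid\Lambda_{\Ts})=\mathrm{e}^{-\Lambda_{\Ts}(\dt-\tf)}$ for $\tf\le\dt$,
\[
P(S\mid\bm{X}(t)=\bm{x},\Lambda_{\Ts})=\int_0^{\dt}\lambda_{\Tf}\,\mathrm{e}^{-\lambda_{\Tf}\tf}\,\mathrm{e}^{-\Lambda_{\Ts}(\dt-\tf)}\,d\tf.
\]
Evaluating this integral, treating the cases $\lambda_{\Tf}=\Lambda_{\Ts}$ and $\lambda_{\Tf}\neq\Lambda_{\Ts}$ separately, reproduces exactly $\lambda_{\Tf}\,\mathrm{e}^{-\dt\Lambda_{\Ts}}\phi(\dt)$ with $\phi$ as defined in the statement.

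For the unconditional claim I would take expectation over $\Lambda_{\Ts}$. Since $\lambda_{\Tf}=\lambda(\bm{x})=\sum_{i,j,k}\q[\bm{x},k][ij][]$ is deterministic given $\bm{X}(t)=\bm{x}$, it suffices to prove $E[\mathrm{e}^{-\dt\Lambda_{\Ts}}\phi(\dt)]=\dt+o(\dt)$. The key observation, read straight off the integral representation, is that
\[
\mathrm{e}^{-\dt\Lambda_{\Ts}}\phi(\dt)=\int_0^{\dt}\mathrm{e}^{-\lambda_{\Tf}\tf}\,\mathrm{e}^{-\Lambda_{\Ts}(\dt-\tf)}\,d\tf\le\dt,
\]
because both exponential factors lie in $(0,1]$. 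Thus $\dt^{-1}\mathrm{e}^{-\dt\Lambda_{\Ts}}\phi(\dt)\le 1$ is dominated by an integrable constant, while pointwise $\dt^{-1}\mathrm{e}^{-\dt\Lambda_{\Ts}}\phi(\dt)\to 1$ as $\dt\downarrow 0$, using $\phi(\dt)/\dt\to 1$ together with $\Lambda_{\Ts}<\infty$ almost surely from stability. Dominated convergence gives $\dt^{-1}E[\mathrm{e}^{-\dt\Lambda_{\Ts}}\phi(\dt)]\to 1$, and multiplying by $\lambda_{\Tf}$ yields $P(S\mid\bm{X}(t)=\bm{x})=\dt\sum_{i,j,k}\q[\bm{x},k][ij][]+o(\dt)$.

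The hard part will be justifying the interchange of the expectation over the random, a priori unbounded rate $\Lambda_{\Ts}$ with the small-$\dt$ expansion. What makes this clean is the uniform bound $\mathrm{e}^{-\dt\Lambda_{\Ts}}\phi(\dt)\le\dt$, which furnishes the constant dominating function $1$ after normalizing by $\dt$; consequently no moment conditions on $\Lambda_{\Ts}$ are required and stability alone suffices. The only point demanding care is that this bound must hold across both branches of $\phi$, namely $\lambda_{\Tf}=\Lambda_{\Ts}$ and $\lambda_{\Tf}\neq\Lambda_{\Ts}$, which the integral form handles automatically without a separate case analysis.
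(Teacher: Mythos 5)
Your proof is correct and follows essentially the same route as the paper: the same representation of $S$ as $\{\Tf\le\dt,\ \Tf+\Ts>\dt\}$, the same exponential integral
$\int_0^{\dt}\lambda_{\Tf}e^{-\lambda_{\Tf}\tf}e^{-\Lambda_{\Ts}(\dt-\tf)}\,d\tf$
(the paper writes it as a double integral over $(\tf,\ts)$ and integrates out $\ts$ first), and the same two-case evaluation yielding $\lambda_{\Tf}e^{-\dt\Lambda_{\Ts}}\phi(\dt)$. The one place you genuinely diverge is the interchange of expectation and the small-$\dt$ limit. The paper Taylor-expands each branch to $\lambda_{\Tf}\dt+o(\dt)$ and then invokes dominated convergence with the bound $P(S\mid\bm{X}(t)=\bm{x},\Lambda_{\Ts})f_{\Lambda_{\Ts}}\le f_{\Lambda_{\Ts}}$; as written this only bounds the probability by $1$, which does not dominate $\dt^{-1}P(S\mid\bm{X}(t)=\bm{x},\Lambda_{\Ts})$ as $\dt\downarrow 0$. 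Your observation that the integral form gives the uniform bound $e^{-\dt\Lambda_{\Ts}}\phi(\dt)\le\dt$ across both branches of $\phi$, so that after normalizing by $\dt$ the dominating function is the constant $1$ (hence $\dt^{-1}P(S\mid\bm{X}(t)=\bm{x},\Lambda_{\Ts})\le\lambda_{\Tf}$), is the tighter and cleaner justification: it makes the dominated-convergence step airtight with no moment conditions on $\Lambda_{\Ts}$, needing only $\Lambda_{\Ts}<\infty$ a.s.\ from stability for the pointwise limit. In short, same decomposition and computation, but your domination argument repairs a slightly loose step in the paper's own write-up.
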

%
%
%
\begin{proof}
Start by fixing the random variable $\Lambda_V$ at a given constant, say $\lambda_V$.
Given the Markov property, for the starting time $t$, the densities of the exponential inter-event times are $f_{\Tf}(\tf) = \lambda_{\Tf} \mbox{e}^{-\tf\lambda_\Tf}$ for $\tf>0$
and $f_{\Ts}(\ts) = \lambda_{\Ts} \mbox{e}^{-\ts \lambda_{\Ts}}$ for $\ts>0$.
Then,
\bea
\nonumber P(S|\bm{X}(t)=\bm{x}) &=& P(\Tf < \dt, \Tf+\Ts > \dt) = P(\Tf < \dt, \Ts > \dt - \Tf)\\
\nonumber &=& \int\limits^{\dt}_{0}\int\limits_{\dt-\tf}^{\infty} f_{\Tf,\Ts}(\tf,\ts)d\ts d\tf = \int\limits^{\dt}_{0}\int\limits_{\dt-\tf}^{\infty} \lambda_\Tf \mbox{e}^{-\tf \lambda_{\Tf}} \lambda_\Ts \mbox{e}^{-\ts \lambda_{\Ts}} d\ts d\tf\\
\nonumber &=& \int\limits^{\dt}_{0} \lambda_{\Tf} \mbox{e}^{-\tf \lambda_{\Tf}} \lambda_{\Ts} d\tf \int\limits_{\dt-\tf}^{\infty} \mbox{e}^{-\ts \lambda_{\Ts}}d\ts = \int\limits^{\dt}_{0} \lambda_{\Tf} \mbox{e}^{-\tf \lambda_{\Tf}} \mbox{e}^{-(\dt-\tf)\lambda_{\Ts}}d\tf\\
\label{eqn:prob-one-jump-int} &=& \lambda_\Tf \mbox{e}^{-\dt\lambda_{\Ts}} \int\limits^{\dt}_{0}\mbox{e}^{-\tf (\lambda_{\Tf} -\lambda_{\Ts})}d\tf.%
\eea
If the event rate is not changed by the first event happening (like
it happens in a Poisson process but unlike linear birth or death
processes), then we can write $\lambda_{\Tf} = \lambda_{\Ts}= \lambda$ in~\eqref{eqn:prob-one-jump-int} and
\bea
\label{eqn:phi-h-I} P(S|\bm{X}(t)=\bm{x},\lambda_{\Ts})
          &=& \lambda_{\Tf} \mbox{e}^{-\dt\lambda_{\Ts}}\dt\\
\nonumber &=& \lambda\dt\big(1-\lambda\dt+o(\dt)\big)\\
\label{eqn:prob-one-jump-I} &=& \lambda\dt + o(\dt).
\eea
If  $\lambda_{\Tf} \neq \lambda_{\Ts}$, then from~\eqref{eqn:prob-one-jump-int}
\bea
\label{eqn:phi-h-II} P(S|\bm{X}(t)=\bm{x},\lambda_{\Ts})
          &=& \lambda_{\Tf} \mbox{e}^{-\dt\lambda_{\Ts}}\biggl[\frac{1 - \mbox{e}^{-\dt(\lambda_{\Tf} -\lambda_{\Ts})}}{\lambda_{\Tf} -\lambda_{\Ts}}\biggr]\\
\nonumber &=& \lambda_{\Tf} \frac{\mbox{e}^{-\dt\lambda_{\Ts}} - \mbox{e}^{-\dt\lambda_{\Tf}}}{\lambda_{\Tf} -\lambda_{\Ts}}\\
\nonumber &=& \lambda_{\Tf} \frac{1 - \dt\lambda_{\Ts} + o(\dt) - 1 + \dt\lambda_{\Tf} + o(\dt)}{\lambda_{\Tf} -\lambda_{\Ts}}\\
\nonumber &=& \lambda_{\Tf} \dt\frac{\lambda_{\Tf} - \lambda_{\Ts}}{\lambda_{\Tf} -\lambda_{\Ts}} + o(\dt)\\
\label{eqn:prob-one-jump-II} &=& \lambda_{\Tf} \dt + o(\dt).%
\eea
Combining~\eqref{eqn:phi-h-I} and~\eqref{eqn:phi-h-II}, replacing $\lambda_\Ts$ by $\Lambda_\Ts$ and conditioning on $\Lambda_\Ts$ gives the fist result in the theorem.
For the general case where $\lambda_{\Ts}$ is stochastic, the limit in
\begin{eqnarray*}
\lim\limits_{\dt \downarrow 0}\dt^{-1}P(S|\bm{X}(t)=\bm{x}) = \lim\limits_{\dt \downarrow 0} \dt^{-1}E[P(S|\bm{X}(t)=\bm{x},\Lambda_{\Ts})] &=& E[\lim\limits_{\dt \downarrow 0} \dt^{-1}(\lambda_{\Tf} \dt + o(\dt))] = \lambda_{\Tf},
\end{eqnarray*}
can be passed inside the expected value since $P(S|\bm{X}(t)=\bm{x},\Lambda_{\Ts}) f_{\lambda_{\Ts}} \leq f_{\lambda_{\Ts}}$ and $\int
f_{\lambda_{\Ts}}(r)\;dr=1$.
\end{proof}


\section{Simple MCPs: subordination and multiplicative L\'{e}vy white noise}\label{app:KBDS}
Let $\{M_{\lambda}(t)\}$ be the simple, time homogeneous, conservative and stable MCP with rate function $\lambda:\mathbb{N} \rightarrow R^+$ of Theorem~\ref{thm:KBDS}.
It will be convenient here to write $M(\lambda)(t)$ instead of $M_\lambda(t)$.
Write $\pi^{M(\lambda)}_{n,n+k}(\dt)$ for the integrated increment (or transition) probabilities of  $\{M(\lambda)(t)\}$, defined as
\[
\pi^{M(\lambda)}_{n,n+k}(\dt) \equiv P\Big( \Delta M(t) = k | M(t) = m \Big).
\]
For $\{M(\lambda)(t)\}$, Kolmogorov's Backward Differential System is satisfied \cite{bremaud1999}, i.e.
\begin{eqnarray}
\label{eqn:simple-KBDS}
\frac{d}{d\dt}\pi^{M(\lambda)}_{n,n+k}(\dt) = \Bigl[\; \pi^{M(\lambda)}_{n+1,n+k}(\dt) - \pi^{M(\lambda)}_{n,n+k}(\dt)\;\Bigr] \lambda(m).
\end{eqnarray}
This suggests the following definition of $\{M(\lambda \xi)(t)\}$, a simple MCP $\{M(\lambda)(t)\}$ with multiplicative continuous-time noise in the rate function, where $\{\xi (t)\} \equiv \{dL(t)/dt\}$ for a non-decreasing, L\'{e}vy integrated noise process $\{L(t)\}$ with $L(0)=0$ and $E[L(t)]=t$, as in Theorem~\ref{thm:KBDS}.
Define the process $\{M(\lambda \xi)(t)\}$ by
\[
\pi^{M(\lambda \xi)}_{n,n+k}(\dt) \equiv E\Bigl[\Pi^{M(\lambda \xi)}_{n,n+k}(\dt)\Bigr]
\]
where $\Pi^{M(\lambda \xi)}_{n,n+k}(\dt)$ is specified, by analogy to~\eqref{eqn:simple-KBDS}, as the solution to a stochastic differential equation
\begin{eqnarray}
\label{eqn:stoch-KBDS-sde}
d\Pi^{M(\lambda \xi)}_{n,n+k}(\dt) = \bigl[\; \Pi^{M(\lambda \xi)}_{n+1,n+k}(\dt) - \Pi^{M(\lambda \xi)}_{n,n+k}(\dt) \;\bigr]
\lambda(n)\,dL(h),
\end{eqnarray}
or, essentially equivalently,
\begin{eqnarray}
\label{eqn:stoch-KBDS}
\Pi^{M(\lambda \xi)}_{n,n+k}(\dt) &=& \Pi^{M(\lambda \xi)}_{n,n+k}(0) + \int\limits_{0}^{\dt} \Bigl[\Pi^{M(\lambda \xi)}_{n+1,n+k}(r-) - \Pi^{M(\lambda \xi)}_{n,n+k} (r-) \Bigr] \lambda(m) \;dL(r)
\end{eqnarray}
To give meaning to~\eqref{eqn:stoch-KBDS-sde} and~\eqref{eqn:stoch-KBDS}, it is necessary to define a stochastic integral.
Here, we use the Marcus canonical stochastic integral with Marcus map $\Phi(u,x,y) = \pi^{M(\lambda)}_{n,n+k}(x + uy)$.
The Marcus canonical integral is a stochastic integral developed in the context of L\'{e}vy calculus \cite{applebaum2004}.
It is constructed to satisfy a chain rule of the Newton-Leibniz type (unlike the It\^{o} integral).
In the case of continuous L\'{e}vy processes, the Marcus canonical integral becomes the Stratonovich integral.
For jump processes, the Marcus canonical integral heuristically corresponds to approximating trajectories by increasingly accurate continuous piecewise linear functions.
We interpret~\eqref{eqn:stoch-KBDS-sde} as a stochastic version of~\eqref{eqn:simple-KBDS}.
We then think of $\Pi^{M(\lambda \xi)}_{n,n+k}(\dt)$ as stochastic transition probabilities, conditional on the noise process, giving rise to deterministic transition probabilities $\pi^{M(\lambda \xi)}_{n,n+k}(\dt)$ once this noise is integrated out.

\begin{proof}[\textbf{Proof of Theorem~\ref{thm:KBDS}}(L\'{e}vy white noise and subordination)]
By definition
\[
\pi^{M(\lambda) \circ L}_{n,n+k}(\dt) \equiv E\Bigl[ \pi^{M(\lambda)}_{n,n+k}\big(L(\dt)\big)\Bigr].
\]
Applying Theorem 4.4.28 of Applebaum \cite{applebaum2004} with $f\big(L(\dt)\big) = \Pi^{M(\lambda)}_{n,n+k}\big(L(\dt)\big)$, it follows that $f \in C^3(\R)$ by smoothness of $f$ implied by~\eqref{eqn:simple-KBDS} and then, since $\Pi^{M(\lambda)}_{n,n+k}(0) = 0$, that
\[
\Pi_{n,n+k}^{M(\lambda)}\big(L(\dt)\big) = \int\limits_{0}^{\dt} \Bigl[\Pi^{M(\lambda)}_{n+1,n+k}\big(L(r-)\big) - \Pi^{M(\lambda)}_{n,n+k} \big(L(r-)\big)\Bigr] \lambda(m) \;dL(r),
\]
so that $\Pi^{M(\lambda)}_{n,n+k}\big(L(\dt)\big)$ satisfies~\eqref{eqn:stoch-KBDS}.
Given uniqueness and existence of~\eqref{eqn:stoch-KBDS}, it follows that
\[
\pi^{M(\lambda)}_{n,n+k}\big(L(\dt)\big) \sim \Pi^{M(\lambda \xi)}_{n,n+k}(\dt),
\]
and hence that $\pi^{M(\lambda) \circ L}_{n,n+k}(\dt) = \pi^{M(\lambda \xi)}_{n,n+k}(\dt)$.
\end{proof}




\bibliographystyle{elsarticle-harv}
\bibliography{OCTMCP-bib}







\end{document}